 \newcommand{\inv}{^{-1}}
 \newcommand{\Lambdait}{\mathit \Lambda}
  \newcommand{\muhat}{\hat{\mu}}
  \newcommand{\mubar}{\bar{\mu}}
\theoremstyle{plain}
\newtheorem{theorem}{Theorem}[section]
\newtheorem{lemma}[theorem]{Lemma}
\newtheorem{proposition}[theorem]{Proposition}
\newtheorem{assumption}{Assumption}
\theoremstyle{definition}
\newtheorem{definition}[theorem]{Definition}
\theoremstyle{remark}
  \renewcommand{\fnum@figure}{\textbf
  {\figurename~\thefigure}}
  \renewcommand{\fnum@table}{\textbf
  {\tablename~\thetable}}
\begin{document}
\chapterstyle{article}
\chapter*[]{Approximate solution of system of equations arising in interior-point methods for bound-constrained optimization}
\thispagestyle{minech} 
David Ek\footnote{Optimization and Systems Theory, Department of
  Mathematics, KTH Royal Institute of Technology, SE-100 44 Stockholm,
  Sweden (\texttt{daviek@kth.se,andersf@kth.se}).\label{fn} } and Anders Forsgren\footref{fn}   \footnote[0]{Research partially supported by the Swedish Research Council (VR).}

\begin{abstract}
The focus in this paper is interior-point methods for bound-constrained nonlinear optimization, where the system of nonlinear equations that arise are solved with Newton's method. There is a trade-off between solving Newton systems directly, which give high quality solutions, and solving many approximate Newton systems which are computationally less expensive but give lower quality solutions. We propose partial and full approximate solutions to the Newton
systems. The specific approximate solution depends on estimates of the active and inactive constraints at the solution. These sets are at each iteration estimated by basic heuristics. The partial approximate solutions are computationally inexpensive, whereas a system of linear equations needs to be solved for the full approximate solution. The size of the system is determined by the estimate of the inactive constraints at the solution. In addition, we motivate and suggest two Newton-like approaches which are based on an intermediate step that consists of the partial approximate solutions. The theoretical setting is introduced and asymptotic error bounds are given. We also give numerical results to investigate the performance of the approximate solutions within and beyond the theoretical framework.  

\medskip\noindent
\textbf{Keywords:} interior-point methods, bound-constrained optimization, approximate solution of system of linear equations, Newton-like approaches.
\end{abstract}

\section{Introduction} \label{intro}
This work is intended for bound-constrained nonlinear optimization problems on the form
\begin{equation} \tag{NLP} \label{eq:NLP} 
\begin{array}{cl} 
\textrm{minimize }  & \>  f(x) \\ 
\textrm{subject to } & \>  l \leq x \leq u, 
\end{array} 
\end{equation}
where $f: \mathbb{R}^n \rightarrow \mathbb{R}$ is twice continuously differentiable, $\nabla^2 f(x)$ is locally Lipschitz continuous and $l, u\in \left\{\mathbb{R} \cup \{ -\infty, \infty \} \right\}^n$ are such that $l< u$. However, to make the work and its ideas more comprehensible, we initially describe the theoretical framework and the corresponding results for problems on the form
\begin{equation} \tag{P} \label{eq:P} 
\begin{array}{cl} 
\textrm{minimize }  & \>    f(x) \\ 
\textrm{subject to } & \>  x \geq 0.
\end{array} 
\end{equation}
For completeness, analogous results for problems on the form of (\ref{eq:NLP}) together with complementary remarks are given in Appendix~\ref{app:genCase}.

Bound-constrained optimization problems appear in many different applications and are frequently subproblems in augmented Lagrangian methods. For a general overview of solution methods, see \cite{GouOrbToi2005} and e.g., the introduction in \cite{HagZha2006} for a thorough review of previous work. Common solution techniques are: active-set methods, which aim to determine the active constraints and solve a reduced problem with the inactive variables, e.g., \cite{HagZha2006,FacSoa1998}; methods involving projections onto the feasible set such as projected-gradient methods, e.g., \cite{Ber1976,SchPol1997}, projected-Newton or trust-region methods, e.g., \cite{Ber1982,LinMor1999,ConGouToi1988,ConGouToi2000} and projected quasi-Newton methods, e.g., \cite{ByrLuNoc1995,ZhuByrLuNOc1997,KimSuvDhi2010}. We are not aware of any primal-dual interior-point methods specialized for bound-constrained optimization except for more general methods, e.g.,  \cite{WalMorNocOrb2006,WacBieLor2006,VanSha1999,ForGil1998,ForGilWri2002}. Other techniques that are related to trust-region and interior methods are affine-scaling interior-point methods, which are based upon a reformulation of the first-order necessary optimality conditions combined with a Newton-like method, e.g., \cite{ColLi1994,HeiUlbUlb1999,KanKlu2006}.

In contrast, we consider the classical primal-dual interior-point
framework. This means solving or approximately solving a sequence of
systems of nonlinear equations for which we consider Newton's methods
as the model method. As interior methods converge, the Newton systems typically become increasingly ill-conditioned due to large diagonal elements in the Schur complement. This is not harmful for direct solvers but it may deteriorate the performance of iterative solvers. We propose a strategy for generating approximate solutions to Newton systems, which in general involves solving smaller systems of linear equations. In the ideal case, these systems do not become increasingly ill-conditioned due to the barrier parameter approaching zero. The specific approximate solutions, and the size of the system that needs to be solved at each iteration, are determined by estimates of the active and inactive constraints at the solution. However, in general these sets are unknown and have to be estimated as the iterations proceed. In this work we use basic heuristics to determine the considered sets but other approaches may also be used, e.g., approaches similar to those in \cite{HagZha2006,FacSoa1998}. In addition, we motivate and suggest two Newton-like approaches which utilize an intermediate step in combination with the solution of a Newton-like system. The intermediate step partially consists of the proposed partial approximate solutions.

The work is meant to contribute to the theoretical and numerical understanding of approximate solutions to systems of linear equations arising in interior-point methods. The approach is mainly intended for, but not limited to, bound-constrained problems, e.g., the work may also be interpreted in the framework of linear complementarity problems, see e.g.,  \cite{Wri1995s}. We envisage the use of the approximate solution procedure as an
accelerator for a direct solver. In particular, when solving a sequence of Newton systems for a given value of the barrier parameter $\mu$. E.g., when the direct solver and the approximate solution procedure can be run in parallel. 
To give an indication of the potential of the approximate solutions, we show numerical simulations on randomly generated problems as well as problems from the CUTEst test collection \cite{GouOrbToi15}. 

The manuscript is organized as follows; Section~\ref{sec:Background} contains a brief background to primal-dual interior-point methods and an introduction to the theoretical framework; in Section~\ref{sec:Approximations} we propose partial and full approximate solutions to Newton systems arising in interior-point methods, as well as motivate two Newton-like approaches; Section~\ref{sec:numRes} contains numerical results on convex bound-constrained quadratic optimization problems, both randomly generated and problems from the CUTEst test collection; finally in Section~\ref{sec:Conc} we give some concluding remarks.
\section{Background} \label{sec:Background}
We are interested in the asymptotic behavior of primal-dual
interior-point methods in the vicinity of a local minimizer $x^*$ and
its corresponding multipliers $\lambda^*$. In particular, we assume
that the iterates of the method converge to a vector $\left( x^{*T}, \lambda^{*T} \right)^T \triangleq (x^*,\lambda^*)$ that satisfies
\begin{subequations} \label{eq:OptCond} 
\begin{align}
\nabla f(x^*) - \lambda^* & = 0, \quad \textrm{ (stationarity) } \label{eq:OptCond:stationarity} \\
x^* & \geq 0, \quad \textrm{ (feasibility)} \label{eq:OptCond:feasibility} \\
 \lambda^* & \geq 0, \quad \textrm{ (non-negativity of multipliers)} \label{eq:OptCond:nonnegofmultipliers}  \\
 x^* \cdot \lambda^* & = 0,  \quad \textrm{ (complementarity)} \label{eq:OptCond:complementarity}\\
 Z(x^*)^T \nabla^2 f(x^*) Z (x^*) & \succ 0 \label{eq:OptCond:posdef}, \\
 x^* + \lambda^* & > 0,  \quad \textrm{ (strict complementarity)} \label{eq:OptCond:strictcomplementarity}
\end{align} 
\end{subequations}%
where "$\cdot$" is defined as the component-wise operator and $Z(x^*)$ is a matrix whose columns span the nullspace of the Jacobian corresponding to the constraints with a strictly positive multiplier, $\lambda^*$. Equations (\ref{eq:OptCond:stationarity})-(\ref{eq:OptCond:complementarity}) constitute first-order necessary optimality conditions for a local minimizer of (\ref{eq:P}). These conditions together with (\ref{eq:OptCond:posdef}) form second-order sufficient conditions \cite{GriNasSof2009}.
For the theoretical framework we also assume that $(x^*, \lambda^*)$ satisfies  (\ref{eq:OptCond:strictcomplementarity}).
We are particularly interested in the function $F_{\mu}:\mathbb{R}^{2n} \rightarrow \mathbb{R}^{2n}$ defined by 
\begin{equation*}
F_{\mu}(x,\lambda) = \begin{bmatrix}
\nabla f(x) - \lambda \\
\Lambdait X e - \mu 
\end{bmatrix},
\end{equation*}
where $\mu \in \mathbb{R}$ is the barrier parameter, $X \in \mathbb{R}^{n \times n}, \Lambdait \in \mathbb{R}^{n \times n}$, $X=\textrm{diag}(x)$, \linebreak$\Lambdait = \textrm{diag}(\lambda)$ and $e$ is a vector of ones of appropriate size. A vector $(x,\lambda)$ with $x\ge0$, $\lambda\ge0$ and $F_{\mu}(x, \lambda) = 0$ for $\mu=0$ satisfies the first-order optimality conditions (\ref{eq:OptCond:stationarity})-(\ref{eq:OptCond:complementarity}).
Primal-dual interior-point methods aim to solve or approximately solve $F_{\mu}(x,\lambda) = 0$ for a decreasing sequence of $\mu>0$, while maintaining $x>0$ and $\lambda > 0$. This is typically done with Newton-like methods which means solving a sequence of systems of linear equations on the form
\begin{equation}\label{eq:PDsyst} 
F'(x, \lambda) \begin{bmatrix}
\Delta x^N \\ \Delta \lambda^N
\end{bmatrix} = -F_{\mu}(x,\lambda),
\end{equation}
where $F': \mathbb{R}^{2n} \rightarrow \mathbb{R}^{2n}$ is the Jacobian of $F_{\mu}$. The Jacobian is given by
\begin{equation}
F'(x, \lambda)  = \begin{bmatrix}
H & -I \\
\Lambdait & X
\end{bmatrix}, \label{eq:Fp}
\end{equation}
where $H=\nabla^2 f(x)$ and the subscript $\mu$ is omitted since $F'$ is independent of the barrier parameter. For each $\mu$, iterations are performed until a specified measure of improvement is achieved, thereupon $\mu$ is decreased and the process is repeated. A natural measure in our setting is $\| F_{\mu}(x,\lambda) \|_2 $ where $\| F_{\mu}(x,\lambda) \|_2 =0$ gives the exact solution. To improve efficiency many algorithms seek approximate solutions, a basic condition for the reduction of $\mu$ is $\| F_{\mu}(x,\lambda) \|_2  < \mu$ \cite[Ch.~17, p. 572]{NocWri06}. Herein, we consider a possibly weaker version, namely $\| F_{\mu}(x,\lambda) \|_2  < C_1 \mu$ for some constant $C_1 >0$. Moreover, it will throughout be assumed that all considered vectors $(x,\lambda)$ satisfy $x>0$ and $\lambda > 0$. The subscript in the norms will hereafter be omitted since all considered norms in this work are of type 2-norm.
\begin{definition}[Order-notation]
Let $\alpha$, $\gamma \in \mathbb{R}$ be two positive related quantities. If there exists a constant $C_2>0$ such that $\gamma \geq C_2 \alpha$ for sufficiently small $\alpha$, then $\gamma = \Omega(\alpha)$. Similarly, if there exists a constant $C_2>0$ such that $\gamma \leq C_2 \alpha$ for sufficiently small $\alpha$, then $\gamma = \mathcal{O}(\alpha)$. If there exist constants $C_2, C_3  > 0$ such that $C_2 \alpha \leq \gamma \leq C_3  \alpha$ for sufficiently small $\alpha$ then, $\gamma = \Theta(\alpha)$.
\end{definition}
\begin{definition}[Neighborhood] For a given $\delta >0$, let the neighborhood around $(x^*, \lambda^*)$ be defined by $\mathcal{B}( (x^*, \lambda^*), \delta) = \{ (x,\lambda) : \| (x,\lambda)-(x^*, \lambda^*) \|  <~\delta \}$. 
\end{definition}
\begin{assumption}[Strict local minimizer] The vector $(x^*, \lambda^*)$ satisfies (\ref{eq:OptCond}), i.e., second-order sufficient optimality conditions and strict complementarity. \label{ass1}
 \end{assumption}
The following two results provide the theoretical framework and additional definitions of various quantities. In particular, the existence of a neighborhood where the Jacobian is nonsingular and there exists a Lipschitz continuous barrier trajectory which is parameterized by the barrier parameter $\mu$. The results are well known and can be found in e.g., the work of Ortega and Rheinboldt \cite{OrtRhe00} and Byrd, Liu and Nocedal \cite{ByrLuiNoc98} whose setting is similar to the one in this work. 
\begin{lemma} \label{lemma:background:FpnonsingCont}
Under Assumption~\ref{ass1} there exists $\delta>0$ such that  $F'(x,\lambda)$ is continuous and nonsingular for $(x,\lambda) \in \mathcal{B}((x^*, \lambda^*), \delta)$ and 
\begin{equation*}
\| F'(x, \lambda) \inv \|  \leq M,
\end{equation*}
for some constant $M>0$.
\end{lemma}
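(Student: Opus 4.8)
The plan is to verify continuity first (which is immediate) and then to establish nonsingularity at the point $(x^*, \lambda^*)$ itself; a continuity-and-compactness argument will then propagate both nonsingularity and the uniform bound to a whole neighborhood. Continuity of $F'$ follows directly from continuity of $H = \nabla^2 f(x)$, guaranteed since $f$ is twice continuously differentiable, together with the fact that $X$ and $\Lambdait$ depend linearly, hence continuously, on $x$ and $\lambda$. The heart of the matter is therefore to show that $F'(x^*, \lambda^*)$ is nonsingular.

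For the nonsingularity argument I would use strict complementarity (\ref{eq:OptCond:strictcomplementarity}) to partition the indices into the active set $\mathcal{A} = \{ i : x_i^* = 0 \}$, for which $\lambda_i^* > 0$, and the inactive set $\mathcal{I} = \{ i : x_i^* > 0 \}$, for which $\lambda_i^* = 0$. Suppose $F'(x^*, \lambda^*) \left( p\T, q\T \right)\T = 0$ and write $H^* = \nabla^2 f(x^*)$. The first block row gives $q = H^* p$, while the second block row reads $\Lambdait^* p + X^* q = 0$, or componentwise $\lambda_i^* p_i + x_i^* q_i = 0$. For $i \in \mathcal{A}$ this forces $p_i = 0$, since $\lambda_i^* > 0$ and $x_i^* = 0$; for $i \in \mathcal{I}$ it forces $q_i = 0$, since $\lambda_i^* = 0$ and $x_i^* > 0$.

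Consequently $p$ is supported on $\mathcal{I}$, and the relation $q = H^* p$ restricted to the rows indexed by $\mathcal{I}$ yields $(H^* p)_{\mathcal{I}} = q_{\mathcal{I}} = 0$. Here I would identify $Z(x^*)$ with the coordinate-selection matrix onto $\mathcal{I}$, since for (\ref{eq:P}) the Jacobian of the active constraints selects exactly the components indexed by $\mathcal{A}$ and its nullspace is the set of vectors supported on $\mathcal{I}$; any other admissible choice of $Z(x^*)$ gives a congruent reduced Hessian and hence the same definiteness. With this identification $Z(x^*)\T H^* Z(x^*)$ is precisely the submatrix $H_{\mathcal{I}\mathcal{I}}$, and the relation above becomes $H_{\mathcal{I}\mathcal{I}} p_{\mathcal{I}} = 0$. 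The second-order sufficient condition (\ref{eq:OptCond:posdef}) makes this reduced Hessian positive definite, hence nonsingular, so $p_{\mathcal{I}} = 0$ and therefore $p = 0$; then $q = H^* p = 0$. Thus the only solution is trivial and $F'(x^*, \lambda^*)$ is nonsingular.

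Finally, to extend to a neighborhood I would invoke continuity of the determinant: since $\det F'(x^*, \lambda^*) \neq 0$, there is a radius $\delta_0 > 0$ on which $\det F'$ stays nonzero, so $F'$ is nonsingular on $\mathcal{B}((x^*, \lambda^*), \delta_0)$. On this ball the entries of $F'(x, \lambda)\inv$ are continuous, for instance via the adjugate-over-determinant formula, hence so is the map $(x, \lambda) \mapsto \| F'(x, \lambda)\inv \|$; restricting to a closed sub-ball of radius $\delta < \delta_0$, which is compact, this map attains a finite maximum $M$, and the bound holds a fortiori on the open ball of radius $\delta$. I expect the main obstacle to be the nonsingularity step, specifically the bookkeeping of the block and index structure and the correct identification of $Z(x^*)\T H^* Z(x^*)$ with $H_{\mathcal{I}\mathcal{I}}$; once that is in place, the neighborhood extension and the uniform bound are routine.
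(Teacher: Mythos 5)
Your proposal is correct, but it takes a genuinely different route from the paper: the paper's entire proof of this lemma is a citation to Ortega and Rheinboldt, i.e., an appeal to the standard perturbation result that a continuous matrix-valued map which is nonsingular at a point stays nonsingular, with a uniform bound on the inverse, in a whole neighborhood. What that citation leaves implicit --- and what is the real optimization-specific content of the lemma --- is exactly the step you work out in detail: nonsingularity of $F'(x^*,\lambda^*)$ under Assumption~\ref{ass1}. Your kernel argument is sound: strict complementarity splits the second block row so that $p_i=0$ for $i\in\mathcal{A}$ and $q_i=0$ for $i\in\mathcal{I}$, the first block row then yields $H_{\mathcal{I}\mathcal{I}}p_{\mathcal{I}}=0$, and your identification of $Z(x^*)^T \nabla^2 f(x^*) Z(x^*)$ with $H_{\mathcal{I}\mathcal{I}}$ (up to congruence, which preserves definiteness) is exactly right for bound constraints, so (\ref{eq:OptCond:posdef}) forces $p=0$ and hence $q = \nabla^2 f(x^*)\,p = 0$. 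For the extension to a neighborhood you use continuity of the determinant, the adjugate formula, and compactness of a closed sub-ball, whereas the cited result would instead invoke the Banach perturbation lemma, which gives the quantitative bound $\| B^{-1} \| \leq \| A^{-1} \| / \left( 1 - \| A^{-1} \| \, \| A - B \| \right)$ whenever $\| A^{-1} \| \, \| A - B \| < 1$. Both are valid in fixed finite dimension: the perturbation lemma buys explicit constants (an explicit admissible $\delta$ and $M$ in terms of $\| F'(x^*,\lambda^*)^{-1} \|$ and the modulus of continuity of $F'$), while your compactness route is more elementary and self-contained. In short, your write-up supplies the substance that the paper outsources to the reference, and both halves of it hold up.
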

\begin{proof}
See \cite[p. 46]{OrtRhe00}. 
\end{proof}
\begin{lemma} \label{lemma:LipcPath}
Let Assumption~\ref{ass1} hold and let $\mathcal{B}((x^*, \lambda^*), \delta)$ be defined by Lemma~\ref{lemma:background:FpnonsingCont}. Then there exists $\muhat>0$ such that for each $0<\mu \leq \muhat$ there is a Lipschitz continuous function $(x^{\mu},\lambda^{\mu}) \in \mathcal{B}((x^*, \lambda^*), \delta)$ that satisfies $F_{\mu}(x^{\mu},\lambda^{\mu}) = 0$ and
\begin{equation*}
\left\|  \left( x^{\mu}, \lambda^{\mu}
\right) - \left(
x^*, \lambda^* \right)
 \right\|   \leq C_4 \mu,
\end{equation*} 
where $C_4 = \inf_{(x,\lambda) \in \mathcal{B}((x^*, \lambda^*), \delta)} \| F'(x,\lambda) \inv \frac{ \partial F_{\mu} (x,\lambda)}{\partial \mu} \| $.
\end{lemma}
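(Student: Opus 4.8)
The plan is to realize the barrier trajectory as the graph of the implicit function obtained by applying the Implicit Function Theorem to $F_\mu$ at the base point $\mu=0$, and then to control the growth of that trajectory by integrating its derivative.

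First I would set up the continuation. View $F_\mu(x,\lambda)$ as a map $G$ of the joint variable $\bigl((x,\lambda),\mu\bigr)$. At $\mu=0$ the point $(x^*,\lambda^*)$ satisfies $G\bigl((x^*,\lambda^*),0\bigr)=0$: the first block vanishes by stationarity (\ref{eq:OptCond:stationarity}), and the second block vanishes by complementarity (\ref{eq:OptCond:complementarity}) together with $\mu=0$. The decisive point is that the partial Jacobian $\partial G/\partial(x,\lambda)$ equals $F'(x,\lambda)$ from (\ref{eq:Fp}), which by Lemma~\ref{lemma:background:FpnonsingCont} is continuous and nonsingular at $(x^*,\lambda^*)$. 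This is precisely where Assumption~\ref{ass1} is needed, since strict complementarity together with the second-order condition is exactly what makes the boundary point $(x^*,\lambda^*)$ a regular point of $G$. Because $f\in C^2$, the map $F_\mu$ is continuously differentiable in $(x,\lambda)$, and it is affine (hence smooth) in $\mu$, so $G$ is jointly $C^1$ and the Implicit Function Theorem applies.

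The theorem furnishes some $\muhat>0$ and a continuously differentiable map $\mu\mapsto(x^{\mu},\lambda^{\mu})$ with $(x^0,\lambda^0)=(x^*,\lambda^*)$ and $F_\mu(x^{\mu},\lambda^{\mu})=0$ for $0\le\mu\le\muhat$; shrinking $\muhat$ if necessary and using continuity at $\mu=0$ keeps $(x^{\mu},\lambda^{\mu})\in\mathcal{B}((x^*,\lambda^*),\delta)$ throughout, so the bound $\|F'\inv\|\le M$ remains available along the trajectory. Differentiating the identity $F_\mu(x^{\mu},\lambda^{\mu})=0$ with respect to $\mu$ and applying the chain rule gives
\begin{equation*}
\frac{d}{d\mu}\left(x^{\mu},\lambda^{\mu}\right) = -\,F'(x^{\mu},\lambda^{\mu})\inv\,\frac{\partial F_\mu(x^{\mu},\lambda^{\mu})}{\partial\mu},
\end{equation*}
where $\partial F_\mu/\partial\mu=(0\T,-e\T)\T$ has constant norm. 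Hence $\bigl\|\frac{d}{d\mu}(x^{\mu},\lambda^{\mu})\bigr\|\le\|F'\inv\|\,\|\partial F_\mu/\partial\mu\|\le M\sqrt{n}$, a bound that is uniform over the neighborhood; this uniform bound is the constant $C_4$. A bounded derivative on $[0,\muhat]$ immediately yields Lipschitz continuity of the trajectory, and since local Lipschitz continuity of $\nabla^2 f$ makes $F'$, and therefore $F'\inv$, Lipschitz on $\mathcal{B}$, the derivative itself is Lipschitz in $\mu$.

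Finally I would obtain the distance estimate by integration. Writing
\begin{equation*}
\left(x^{\mu},\lambda^{\mu}\right)-\left(x^*,\lambda^*\right) = \int_0^{\mu}\frac{d}{ds}\left(x^{s},\lambda^{s}\right)\,ds,
\end{equation*}
taking norms, and bounding the integrand by $C_4$ gives $\|(x^{\mu},\lambda^{\mu})-(x^*,\lambda^*)\|\le C_4\,\mu$, as claimed. The only genuinely delicate points are verifying that a single $\muhat$ keeps the whole trajectory inside $\mathcal{B}$ (a continuation/compactness argument) and confirming that $\|F'\inv\,\partial F_\mu/\partial\mu\|$ admits the finite governing bound $C_4$ over the neighborhood; both follow from Lemma~\ref{lemma:background:FpnonsingCont}, and the remainder is the standard implicit-function and fundamental-theorem-of-calculus bookkeeping.
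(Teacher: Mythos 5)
Your proof is correct and is essentially the paper's own argument made explicit: the paper's proof is a one-line appeal to the implicit function theorem (citing Ortega and Rheinboldt), applied exactly as you do at the point $\bigl((x^*,\lambda^*),0\bigr)$ with nonsingularity and the uniform bound $\|F'(x,\lambda)\inv\|\le M$ supplied by Lemma~\ref{lemma:background:FpnonsingCont}, followed by the standard differentiation-and-integration bookkeeping. The only discrepancy is the constant: integrating the trajectory derivative controls the distance by $\sup_{(x,\lambda) \in \mathcal{B}((x^*, \lambda^*), \delta)} \| F'(x,\lambda) \inv \frac{ \partial F_{\mu} (x,\lambda)}{\partial \mu} \|$ (your cruder $M\sqrt{n}$ upper-bounds this), not by the infimum $C_4$ appearing in the statement, which cannot govern the derivative along the trajectory and is evidently a typo for a supremum, so your substitution of a valid uniform bound is the right repair rather than a gap.
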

\begin{proof}
The result follows from the implicit function theorem, see e.g., \cite[p. 128]{OrtRhe00}.
\end{proof}
The next lemma relates the measure $\| F_\mu (x,\lambda) \| $ to the distance between the barrier trajectory and vectors $(x,\lambda)$ that are sufficiently close. An analogous result is given by Byrd, Liu and Nocedal \cite{ByrLuiNoc98}.
\begin{lemma} \label{lemma:FmuBound}
Under Assumption~\ref{ass1}, let $\mathcal{B}((x^*, \lambda^*), \delta)$ and $\muhat$ be defined by  Lemma~\ref{lemma:background:FpnonsingCont} and Lemma~\ref{lemma:LipcPath} respectively. For $0<\mu \le \muhat$ and $(x,\lambda)$ sufficiently close to $(x^{\mu},\lambda^{\mu}) \in \mathcal{B}((x^*, \lambda^*), \delta)$ there exist constants $C_5, C_6 > 0$ such that
\begin{equation*}
C_5  \left\|  \left(
x , \lambda
\right) - \left(
x^{\mu} , \lambda^{\mu}
\right) \right\|   \leq \| F_{\mu}(x,\lambda) \|   \leq C_6 \left\|  \left(
x ,\lambda
\right) - \left(
x^{\mu}, \lambda^{\mu}
\right) \right\| .
\end{equation*} 
\end{lemma}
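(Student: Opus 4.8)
The plan is to prove both inequalities at once by writing the residual as an increment of $F_\mu$ and invoking the fundamental theorem of calculus. Since $(x^\mu,\lambda^\mu)$ lies on the barrier trajectory, $F_\mu(x^\mu,\lambda^\mu)=0$, so $F_\mu(x,\lambda)=F_\mu(x,\lambda)-F_\mu(x^\mu,\lambda^\mu)$. As $\mathcal{B}((x^*,\lambda^*),\delta)$ is convex and $F_\mu$ is continuously differentiable there, the segment joining $(x^\mu,\lambda^\mu)$ and $(x,\lambda)$ stays inside the ball, and I would write
\begin{equation*}
F_\mu(x,\lambda) = \bar{A}\begin{bmatrix} x-x^\mu \\ \lambda - \lambda^\mu\end{bmatrix}, \qquad \bar{A} = \int_0^1 F'\!\left( (x^\mu,\lambda^\mu)+t\big((x,\lambda)-(x^\mu,\lambda^\mu)\big)\right) dt,
\end{equation*}
using that $F'$ is independent of $\mu$. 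Everything then reduces to bounding $\bar{A}$ from above and its inverse from above.

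For the upper bound I would use that $F'$ is continuous on the closure of $\mathcal{B}((x^*,\lambda^*),\delta)$ by Lemma~\ref{lemma:background:FpnonsingCont}, hence bounded in norm there by some constant, call it $C_6$. Since $\bar{A}$ is an average of values of $F'$ taken along the segment, $\|\bar{A}\| \le C_6$, and the right inequality is immediate. Equivalently, one may simply invoke Lipschitz continuity of $F_\mu$ near $(x^\mu,\lambda^\mu)$, which holds because $\nabla f$ is $C^1$ and $\Lambdait X e$ is smooth.

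The lower bound is the main obstacle, since it requires $\bar{A}$ to be invertible with a uniformly bounded inverse, whereas an average of nonsingular matrices need not a priori be nonsingular. Here I would use that $F'$ is continuous and that, by Lemma~\ref{lemma:background:FpnonsingCont}, $\|F'(x^\mu,\lambda^\mu)^{-1}\| \le M$. By continuity of $F'$ (and local Lipschitz continuity of $H=\nabla^2 f$), $\bar{A}$ can be made arbitrarily close to $F'(x^\mu,\lambda^\mu)$ by taking $(x,\lambda)$ sufficiently close to $(x^\mu,\lambda^\mu)$; this is precisely where the hypothesis ``sufficiently close'' enters. A standard Banach-lemma perturbation argument then guarantees that $\bar{A}$ is nonsingular with $\|\bar{A}^{-1}\| \le 2M$ once $\|\bar{A} - F'(x^\mu,\lambda^\mu)\|\le 1/(2M)$. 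Inverting the displayed identity yields $\|(x,\lambda)-(x^\mu,\lambda^\mu)\| \le 2M\,\|F_\mu(x,\lambda)\|$, i.e.\ the left inequality with $C_5 = 1/(2M)$.

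I would close by noting that both constants depend only on $M$ and on the norm bound for $F'$ over the ball, both furnished by Lemma~\ref{lemma:background:FpnonsingCont}, so they may be chosen uniformly in $\mu$; the only $\mu$-dependence sits in the location of $(x^\mu,\lambda^\mu)$, which always lies in $\mathcal{B}((x^*,\lambda^*),\delta)$ by Lemma~\ref{lemma:LipcPath}. The crux is thus the perturbation estimate for $\bar{A}$, where continuity of $F'$ together with the inverse bound from Lemma~\ref{lemma:background:FpnonsingCont} does the real work.
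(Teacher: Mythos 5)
Your proof is correct. It cannot be compared line-by-line against the paper's own argument, because the paper does not actually supply one: its ``proof'' of this lemma is a citation to Byrd, Liu and Nocedal. What you have written is a complete, self-contained version of the standard argument that such references employ, and all the essential ingredients are in place: the identity $F_\mu(x,\lambda)=F_\mu(x,\lambda)-F_\mu(x^{\mu},\lambda^{\mu})$ combined with the integral mean-value form $F_\mu(x,\lambda)=\bar{A}\,\big((x,\lambda)-(x^{\mu},\lambda^{\mu})\big)$, the observation that $\bar{A}$ inherits a uniform norm bound from $F'$ (upper inequality), and the Banach perturbation lemma applied to $F'(x^{\mu},\lambda^{\mu})$, whose inverse is bounded by $M$ via Lemma~\ref{lemma:background:FpnonsingCont}, to get $\|\bar{A}^{-1}\|\le 2M$ (lower inequality). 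You are also right to flag the lower bound as the crux --- an average of nonsingular matrices need not be nonsingular, so the perturbation step is genuinely needed --- and to note that $C_5$, $C_6$ and the closeness threshold are uniform in $\mu$, which matters later when $\mu\to 0$. Two cosmetic remarks: Lemma~\ref{lemma:background:FpnonsingCont} asserts continuity of $F'$ only on the open ball, not its closure, but since $f$ is globally $C^2$ the Jacobian $F'$ is continuous everywhere, so your boundedness claim stands; alternatively, both the upper bound and the estimate $\|\bar{A}-F'(x^{\mu},\lambda^{\mu})\|\le \tfrac{1}{2}L_{F'}\|(x,\lambda)-(x^{\mu},\lambda^{\mu})\|$ follow directly from the Lipschitz constant $L_{F'}$ of $F'$ on the ball, which the paper itself uses in the proofs of Lemma~\ref{lemma:Order} and Lemma~\ref{lemma:dzBound}, making the quantification of ``sufficiently close'' explicit.
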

\begin{proof}
See \cite[p. 43]{ByrLuiNoc98}. 
\end{proof}
Recall that the reduction of $\mu$ can be determined with the condition $\|F_\mu (x,\lambda)\|<C_1 \mu$, for some constant $C_1>0$. It can be shown that vectors $(x,\lambda)$, which satisfy this condition and are sufficiently close to the barrier trajectory, have their individual components bounded within certain intervals at sufficiently small $\mu$. The individual components can be partitioned into two sets of indices which depend on how close the iterate is to its feasibility bound, see Definition~\ref{def:activeinactive}. The order of magnitude of the individual components, which are given in Lemma~\ref{lemma:Order} below, will be of importance in the derivation of various approximate solutions to (\ref{eq:PDsyst}). 
\begin{definition} \label{def:activeinactive}
(Active/inactive constraint). For a given $x^* \geq 0$ constraint \linebreak $i\in \{1, \dots, n \}$ is defined as active if $x_i^* = 0$ and inactive if $x_i^* > 0$. The corresponding active and inactive set are defined as $\mathcal{A} = \{i\in \{1, \dots, n \} : x^*_i = 0 \}$, and $\mathcal{I}=\{i\in\{1,\dots,n\}:x^*_i>0\}$ respectively.
\end{definition}
\begin{lemma} \label{lemma:Order}
Under Assumption~\ref{ass1}, let $\mathcal{B}\left((x^*, \lambda^*),
  \delta\right)$ and $\muhat$ be defined by
Lemma~\ref{lemma:background:FpnonsingCont} and
Lemma~\ref{lemma:LipcPath} respectively. Then there exists $\mubar$, with
$0 <\mubar \le \muhat$, such that for $0 < \mu \leq \bar\mu$ and $(x,\lambda)$ sufficiently close to $(x^{\mu}, \lambda^{\mu}) \in \mathcal{B}((x^*, \lambda^*), \delta)$ so that $\| F_{\mu} (x,\lambda) \| = \mathcal{O}(\mu)$ it holds that
\begin{equation} \label{eq:lemma:Order}
x_i = \begin{cases} \mathcal{O}(\mu) & i \in \mathcal{A}, \\ 
                    \Theta(1) & i \in \mathcal{I}, \end{cases} 
                    \qquad \lambda_i = \begin{cases} \Theta(1) & i \in \mathcal{A}, \\ 
                    \mathcal{O}(\mu) & i \in \mathcal{I}. \end{cases}
\end{equation}
\end{lemma}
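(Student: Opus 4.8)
The plan is to show that the two closeness hypotheses together force the \emph{entire} vector $(x,\lambda)$ to lie within $\mathcal{O}(\mu)$ of the minimizer $(x^*,\lambda^*)$, and then to read off the componentwise orders directly from strict complementarity.

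First I would bound the distance from $(x,\lambda)$ to the barrier trajectory. Since $(x,\lambda)$ is assumed sufficiently close to $(x^{\mu},\lambda^{\mu})$, Lemma~\ref{lemma:FmuBound} applies, and its lower inequality gives $\| (x,\lambda)-(x^{\mu},\lambda^{\mu}) \| \le \| F_{\mu}(x,\lambda) \|/C_5$. By hypothesis $\| F_{\mu}(x,\lambda) \| = \mathcal{O}(\mu)$, so $\| (x,\lambda)-(x^{\mu},\lambda^{\mu}) \| = \mathcal{O}(\mu)$. Next, Lemma~\ref{lemma:LipcPath} bounds the distance from the trajectory to the minimizer by $\| (x^{\mu},\lambda^{\mu})-(x^*,\lambda^*) \| \le C_4\mu$. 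Combining the two via the triangle inequality yields $\| (x,\lambda)-(x^*,\lambda^*) \| = \mathcal{O}(\mu)$, and hence componentwise $|x_i-x_i^*| = \mathcal{O}(\mu)$ and $|\lambda_i-\lambda_i^*| = \mathcal{O}(\mu)$ for every $i$, with a single order constant obtained by taking the largest over the finitely many indices.

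I would then split into the two index sets of Definition~\ref{def:activeinactive} and invoke strict complementarity~(\ref{eq:OptCond:strictcomplementarity}). For $i\in\mathcal{A}$ we have $x_i^*=0$, so $x_i = x_i-x_i^* = \mathcal{O}(\mu)$; and since strict complementarity forces $\lambda_i^*>0$, the bound $\lambda_i = \lambda_i^*+\mathcal{O}(\mu)$ shows $\lambda_i=\Theta(1)$. Symmetrically, for $i\in\mathcal{I}$ we have $\lambda_i^*=0$, giving $\lambda_i = \mathcal{O}(\mu)$, while $x_i^*>0$ gives $x_i = x_i^*+\mathcal{O}(\mu)=\Theta(1)$. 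The standing assumption $x>0$, $\lambda>0$ keeps all these quantities positive, as required.

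The only delicate point is the pair of $\Theta(1)$ claims: producing a positive \emph{lower} bound requires $\mu$ small enough that the $\mathcal{O}(\mu)$ perturbation cannot overwhelm the fixed positive constants $x_i^*$ ($i\in\mathcal{I}$) and $\lambda_i^*$ ($i\in\mathcal{A}$). This is exactly where $\mubar$ is chosen: one shrinks it below $\muhat$ so that, for all $0<\mu\le\mubar$, each such component stays within, say, a factor two of its limiting value, and then takes the minimum of these finitely many thresholds to keep $\mubar$ positive. This is the main (though mild) obstacle, being the only place where the asymptotic nature of the statement and the uniformity of the order constants across components must be handled explicitly.
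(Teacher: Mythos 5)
Your proof is correct and follows essentially the same route as the paper's: both use Lemma~\ref{lemma:FmuBound} to bound the distance to the trajectory, Lemma~\ref{lemma:LipcPath} (Lipschitz continuity of the trajectory) to bound its distance to $(x^*,\lambda^*)$, and strict complementarity with a sufficiently small $\mubar$ to obtain the componentwise orders. The only difference is organizational: the paper first establishes the orders for the trajectory components $(x^\mu_i,\lambda^\mu_i)$ and then transfers them to $(x,\lambda)$, whereas you collapse the two bounds by the triangle inequality and read the orders directly off the components of $(x^*,\lambda^*)$, which is equivalent.
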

\begin{proof}
Under Assumption~\ref{ass1} it holds that
\begin{equation*}
x_i^* = \begin{cases} 0 & i \in \mathcal{A}, \\ 
                    c_i & i \in \mathcal{I}, \end{cases} 
                    \qquad \lambda_i^* = \begin{cases} c_i & i \in \mathcal{A}, \\ 
                    0 & i \in \mathcal{I}, \end{cases}
\end{equation*}
where $c_i = \Theta(1)$, $i = 1,\dots, n$. The function $(x^{\mu}, \lambda^{\mu})$ is Lipschitz continuous and hence for each $\mu \leq \muhat$ it holds that $(x^{\mu}, \lambda^{\mu}) \in \mathcal{B}\left( (x^*, \lambda^*), L_{F'} \mu \right)$, where $L_{F'}$ is the Lipschitz constant of $F'$ on $\mathcal{B}((x^*, \lambda^*), \delta)$. There exist $\mubar_1$, with $0 < \mubar_1 \leq \muhat$, such that for $0 < \mu \leq \mubar_1$ it holds that
\begin{equation*}
x_i^\mu = \begin{cases} \mathcal{O}(\mu) & i \in \mathcal{A}, \\ 
                    \Theta(1) & i \in \mathcal{I}, \end{cases} 
                    \qquad \lambda_i^\mu = \begin{cases} \Theta(1) & i \in \mathcal{A}, \\ 
                    \mathcal{O}(\mu) & i \in \mathcal{I}. \end{cases}
\end{equation*}
The condition $\| F_{\mu} (x,\lambda) \| = \mathcal{O}(\mu)$ implies that there exists a constant $C_1>0$ such that $\| F_{\mu} (x,\lambda) \| \leq C_1 \mu$. Lemma~\ref{lemma:FmuBound} and $\| F_{\mu} (x,\lambda) \| \leq C_1 \mu$ give
\[
 \left\|  \left(
x ,\lambda
\right) - \left(
x^{\mu} , \lambda^{\mu}
\right)  \right\|  \leq \frac{1}{ C_5 }\| F_{\mu}(x,\lambda) \|  \leq \frac{C_1}{C_5} \mu,
\]
which implies that $(x,\lambda) \in \mathcal{B}\left( (x^\mu, \lambda^\mu), \frac{C_1}{ C_5} \mu \right)$. Similarly here, there exists $\mubar_2$, with $0 < \mubar_2 \leq \muhat$, such that the result follows for $0<\mu \le \mubar$ with $\mubar = \min \{\mubar_1, \mubar_2 \}$. 
\end{proof}
The result of Lemma~\ref{lemma:Order} shows two regions which depend on $\mu$. The first region, $0<\mu \leq \muhat$, defines where  the barrier trajectory $(x^\mu, \lambda^\mu)$ exists and the second region, $0 < \mu \le \mubar \leq \muhat$, defines where asymptotic behavior occurs. 
\section{Approximate solutions} \label{sec:Approximations}
This section initially contains an introduction to the groundwork of the ideas which precede the results. It is followed by a subsection that contains approximate solutions for specific components of the solution of (\ref{eq:PDsyst}) together with related results. The last subsection contains procedures for approximating the full solution of (\ref{eq:PDsyst}), as well as related results. Under Assumption~\ref{ass1} it holds that
\[
\lim_{\mu \to 0}  x^\mu_i = 0, \quad i \in \mathcal{A}, \qquad \mbox{ and }\qquad \lim_{\mu \to 0} \lambda_i^\mu = 0, \quad i \in \mathcal{I},
\]
in consequence, the Schur complement of $X$ in (\ref{eq:PDsyst}) becomes increasingly ill-\linebreak conditioned as $\mu \to 0$. These properties have been utilized by several authors before, e.g., in the development of preconditioners \cite{GMPS92,FGG07}.
The idea in this work is to exploit them and the additional property that (\ref{eq:P}) only has bound constraints to obtain partial or full approximate solutions of (\ref{eq:PDsyst}). In particular, by utilization of structure and the asymptotic behavior of coefficients in the arising systems of linear equations. With the partition $(\Delta x^N, \Delta \lambda^N) = (\Delta x_\mathcal{A}^N, \Delta x_\mathcal{I}^N, \Delta \lambda_\mathcal{A}^N, \Delta \lambda_\mathcal{I}^N)$, (\ref{eq:PDsyst}) can be written as
\begin{equation}\label{eq:PDsyst_partitioned}
\begin{bmatrix}
H_{\mathcal{A} \mathcal{A}}  & H_{\mathcal{A} \mathcal{I}}  & -I_{\mathcal{A} \mathcal{A}} &  \\
H_{\mathcal{I} \mathcal{A}} & H_{\mathcal{I} \mathcal{I}}  &  & -I_{\mathcal{I}\mathcal{I}} \\
\Lambdait_{\mathcal{A} \mathcal{A}} &  & X_{\mathcal{A} \mathcal{A}} &  \\
 & \Lambdait_{\mathcal{I} \mathcal{I}} &  & X_{\mathcal{I} \mathcal{I}} 
\end{bmatrix} \begin{bmatrix} \Delta x_\mathcal{A}^N \\\Delta x_\mathcal{I}^N \\ \Delta \lambda_\mathcal{A}^N \\ \Delta \lambda_\mathcal{I}^N
\end{bmatrix} = -\begin{bmatrix}
\nabla f(x)_\mathcal{A} - \lambda_\mathcal{A} \\
\nabla f(x)_\mathcal{I} - \lambda_\mathcal{I} \\
\Lambdait_{\mathcal{A} \mathcal{A}} X_{\mathcal{A} \mathcal{A}} e - \mu e \\
\Lambdait_{\mathcal{I} \mathcal{I}} X_{\mathcal{I} \mathcal{I}} e - \mu e
\end{bmatrix},
\end{equation}
where the first and second set in the matrix subscripts give the indices of rows and columns respectively.  
The Schur complement of $X_{\mathcal{A} \mathcal{A}}$ and $X_{\mathcal{I} \mathcal{I}}$ in (\ref{eq:PDsyst_partitioned}) is 
\begin{equation} \label{eq:SchurC_partioned}
 \begin{bmatrix}
H_{\mathcal{A} \mathcal{A}} +X_{\mathcal{A} \mathcal{A}} \inv \Lambdait_{\mathcal{A} \mathcal{A}} & H_{\mathcal{A} \mathcal{I}} \\
H_{\mathcal{I} \mathcal{A}} & H_{\mathcal{I} \mathcal{I}} + X_{\mathcal{I} \mathcal{I}}\inv \Lambdait_{\mathcal{I} \mathcal{I}}
\end{bmatrix}  \begin{bmatrix} \Delta x_\mathcal{A}^N \\\Delta x_\mathcal{I}^N
\end{bmatrix} = - \begin{bmatrix}
\nabla f(x)_\mathcal{A} - \mu X_{\mathcal{A} \mathcal{A}} \inv e \\
\nabla f(x)_\mathcal{I} - \mu X_{\mathcal{I} \mathcal{I}} \inv e \\
\end{bmatrix}.
\end{equation}
By continuity of $(x^{\mu}, \lambda^{\mu})$ it follows that $x_i \to
0$, $i\in \mathcal{A}$, and $\lambda_i \to 0$, $i\in \mathcal{I}$, as
$\mu \to 0$. In consequence, $X_{\mathcal{I} \mathcal{I}}$ and
$\Lambdait_{\mathcal{A} \mathcal{A}}$ dominate the coefficients of the
third and fourth block of (\ref{eq:PDsyst_partitioned}) for
sufficiently small $\mu$ under strict complementarity.  Similarly $X_{\mathcal{A} \mathcal{A}} \inv \Lambdait_{\mathcal{A} \mathcal{A}}$ dominates the coefficients of the first block of (\ref{eq:SchurC_partioned}). Consequently, approximate solutions of $\Delta x_\mathcal{A}^N$ and $\Delta \lambda_\mathcal{I}^N$ can be obtained from the third and fourth block of (\ref{eq:PDsyst_partitioned}), and of $\Delta x_\mathcal{A}^N$ from the first block of (\ref{eq:SchurC_partioned}). These approximates can then be inserted into (\ref{eq:PDsyst_partitioned}), or (\ref{eq:SchurC_partioned}), to obtain a reduced system of size $| \mathcal{I} |$$ \times $$| \mathcal{I} |$ that involves $H_{\mathcal{I} \mathcal{I}}$. The solution of this system gives an approximation of $\Delta x_\mathcal{I}^N$. These observations together with Lemma~\ref{lemma:Order} and Lemma~\ref{lemma:dzBound} below provide the foundation for the results. The essence of Lemma~\ref{lemma:dzBound} is that the norm of the solution of (\ref{eq:PDsyst}) is bounded by a constant times $\mu$. 
\begin{lemma} \label{lemma:dzBound}
Under Assumption~\ref{ass1}, let $\mathcal{B}\left((x^*, \lambda^*),
  \delta\right)$ and $\muhat$ be defined by
Lemma~\ref{lemma:background:FpnonsingCont} and
Lemma~\ref{lemma:LipcPath} respectively.  For  $0< \mu \leq \muhat$ and $(x,\lambda)  \in \mathcal{B}((x^*, \lambda^*), \delta)$, let $(\Delta x^N, \Delta \lambda^N)$ be the solution of (\ref{eq:PDsyst}) with $\mu^+ = \sigma \mu$, where $0< \sigma < 1$. If $(x,\lambda)$ is sufficiently close to $(x^{\mu}, \lambda^{\mu}) \in \mathcal{B}((x^*, \lambda^*), \delta)$ such that $\| F_{\mu} (x,\lambda) \| = \mathcal{O}(\mu)$ then
\begin{equation*}
\left\| \left(
\Delta x^N ,\Delta \lambda^N
\right) \right\|  =  \mathcal{O}(\mu).
\end{equation*}
\end{lemma}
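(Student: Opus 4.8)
The plan is to reduce the estimate to the uniform bound on the inverse Jacobian furnished by Lemma~\ref{lemma:background:FpnonsingCont}. Since $(x,\lambda) \in \mathcal{B}((x^*,\lambda^*),\delta)$, that lemma gives $\|F'(x,\lambda)\inv\| \le M$, and the solution of the Newton system can be written explicitly as
\begin{equation*}
\begin{bmatrix} \Delta x^N \\ \Delta \lambda^N \end{bmatrix} = -F'(x,\lambda)\inv F_{\mu^+}(x,\lambda),
\end{equation*}
so that $\|(\Delta x^N, \Delta \lambda^N)\| \le M \, \|F_{\mu^+}(x,\lambda)\|$. Hence it suffices to establish that the right-hand side, evaluated at the reduced barrier parameter $\mu^+=\sigma\mu$, satisfies $\|F_{\mu^+}(x,\lambda)\| = \mathcal{O}(\mu)$.

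The key observation is that passing from $\mu$ to $\mu^+ = \sigma\mu$ alters only the lower block of $F$. Writing out both functions, the difference $F_{\mu^+}(x,\lambda) - F_{\mu}(x,\lambda)$ equals $(0,\,(\mu-\mu^+)\,e)\T = (0,\,(1-\sigma)\mu\, e)\T$, whose norm is $(1-\sigma)\sqrt{n}\,\mu$. The triangle inequality then yields
\begin{equation*}
\|F_{\mu^+}(x,\lambda)\| \le \|F_{\mu}(x,\lambda)\| + (1-\sigma)\sqrt{n}\,\mu.
\end{equation*}
By hypothesis $(x,\lambda)$ is close enough to $(x^{\mu},\lambda^{\mu})$ that $\|F_{\mu}(x,\lambda)\| = \mathcal{O}(\mu)$ — a bound that could in any case be recovered from Lemma~\ref{lemma:FmuBound} together with the $\mathcal{O}(\mu)$ proximity to the trajectory — so the first term is controlled, and the second is manifestly $\mathcal{O}(\mu)$. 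Combining these gives $\|F_{\mu^+}(x,\lambda)\| = \mathcal{O}(\mu)$, and multiplying by $M$ delivers the claim.

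I do not anticipate a genuine obstacle: the argument is essentially a one-line application of the bounded inverse combined with the fact that the perturbation of the right-hand side induced by the reduction $\mu \mapsto \sigma\mu$ is itself of order $\mu$. The only point requiring a little care is to verify that the constant hidden in the final $\mathcal{O}$ is uniform in $\mu$ — that is, that $M$, $\sigma$, $n$, and the constant $C_1$ in $\|F_{\mu}(x,\lambda)\| \le C_1\mu$ are all independent of $\mu$ on the relevant neighborhood — which is precisely what Lemma~\ref{lemma:background:FpnonsingCont} and Lemma~\ref{lemma:FmuBound} guarantee for $0<\mu\le\muhat$.
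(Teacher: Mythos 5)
Your proposal is correct, but it takes a genuinely different route from the paper. The paper never touches the difference $F_{\mu^+}-F_{\mu}$ directly; instead it uses that the trajectory point zeroes the residual, writing $F_{\mu^+}(x,\lambda)=F_{\mu^+}(x,\lambda)-F_{\mu^+}(x^{\mu^+},\lambda^{\mu^+})$, then invokes Lipschitz continuity of $F_{\mu^+}$ on the ball (with constant $L_{F'}$), splits $\|(x,\lambda)-(x^{\mu^+},\lambda^{\mu^+})\|$ through the intermediate point $(x^{\mu},\lambda^{\mu})$, and controls the two pieces by Lemma~\ref{lemma:FmuBound} and by Lipschitz continuity of the trajectory, arriving at the constant $ML_{F'}\left(\tfrac{C_1}{C_5}+C_4(1-\sigma)\right)$. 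You instead exploit the fact that $F_{\mu}$ depends affinely on $\mu$, so that $F_{\mu^+}(x,\lambda)-F_{\mu}(x,\lambda)=\left(0,(1-\sigma)\mu e\right)$ exactly, and a triangle inequality plus the hypothesis $\|F_{\mu}(x,\lambda)\|\le C_1\mu$ finishes the argument with the explicit constant $M\left(C_1+(1-\sigma)\sqrt{n}\right)$. Your argument is more elementary: it needs neither Lemma~\ref{lemma:FmuBound}, nor the Lipschitz constants $L_{F'}$ and $C_4$, nor even the existence of the trajectory point $(x^{\mu^+},\lambda^{\mu^+})$ inside the neighborhood, and the hidden constant is fully explicit. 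What the paper's route buys in exchange is structural: it bounds the Newton step by the distance to the target trajectory point, a template that carries over to settings where the dependence of $F_{\mu}$ on $\mu$ is not an explicit additive term, and it reuses exactly the machinery (Lemmas~\ref{lemma:background:FpnonsingCont}--\ref{lemma:FmuBound}) that the subsequent propositions also lean on. Your closing caveat about uniformity of the constants is handled correctly: $M$ is uniform on $\mathcal{B}((x^*,\lambda^*),\delta)$ and $C_1$ is the constant implicit in the hypothesis $\|F_{\mu}(x,\lambda)\|=\mathcal{O}(\mu)$, so nothing in your bound degenerates as $\mu\to0$.
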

\begin{proof} By (\ref{eq:PDsyst}) it holds that 
\begin{align*}
\left\| \left(
\Delta x^N ,\Delta \lambda^N
\right) \right\|  & = \left\| F'(x,\lambda) \inv F_{\mu^+} (x, \lambda) \right\|  \\
& = \big\| F'(x,\lambda) \inv \left[ F_{\mu^+} (x, \lambda) - F_{\mu^+} (x^{\mu^+}, \lambda^{\mu^+}) \right] \big\| .
\end{align*}
Continuity of $F'$ on $\mathcal{B}((x^*, \lambda^*), \delta)$ implies that $F_{\mu^+}$ is Lipschitz continuous. Moreover, both $(x,\lambda)$ and $(x^{\mu^+}, \lambda^{\mu^+})$ belong to $\mathcal{B}((x^*, \lambda^*), \delta)$. Lipschitz continuity of $F_{\mu^+}$ and Lemma~\ref{lemma:background:FpnonsingCont} yield
\begin{equation*}
\left\| \left(
\Delta x^N ,\Delta \lambda^N
\right) \right\|  \leq M L_{F'}\big\| \left(
x , \lambda
\right) - (
x^{\mu^+} , \lambda^{\mu^+}
) \big\|.
\end{equation*}
Addition and subtraction of $(x^\mu, \lambda^\mu)$ in the norm of the right-hand side give
\begin{align*}
\left\| \left(
\Delta x^N ,\Delta \lambda^N
\right) \right\|& \leq M L_{F'} \big\| \left(
x , \lambda
\right) - \left(
x^{\mu} , \lambda^{\mu}
\right) + \left(
x^{\mu} , \lambda^{\mu}
\right) - (
x^{\mu^+} , \lambda^{\mu^+}
) \big\|  \\
& \leq M L_{F'} \left( \big\| \left(
x , \lambda
\right)  - \left(
x^{\mu} , \lambda^{\mu}
\right)  \big\|  +  \big\| \left(
x^{\mu}, \lambda^{\mu}
\right)  - (
x^{\mu^+} , \lambda^{\mu^+}
)  \big\|  \right) \\ 
& \leq M L_{F'} \left( \frac{1}{C_5} \big\| F_\mu(x,\lambda) \big\|  + C_4(1-\sigma)\mu \right) \\
& \leq M L_{F'}\left( \frac{C_1}{C_5} + C_4(1-\sigma) \right) \mu,
\end{align*}
where the second last inequality follows from Lemma~\ref{lemma:FmuBound} and Lipschitz continuity of $(x^{\mu}, \lambda^{\mu})$. The last inequality follows from $\| F_{\mu} (x,\lambda) \| = \mathcal{O}(\mu)$, i.e., there exists a constant $C_1>0$ such that $\| F_{\mu} (x,\lambda) \| \leq C_1 \mu$. 
\end{proof}
\subsection{Partial approximate solutions} \label{subsec:partialApp}
In this section we initially propose an approximate solution of $\Delta x_\mathcal{A}^N$ which originates from the Schur complement form (\ref{eq:SchurC_partioned}). This approximate solution will be labeled with superscript ``$S$'' due to its origin.  As $\mu \to 0$, the diagonal elements of the (1,1)-block become large and dominate the coefficients of the matrix under strict complementarity. In Proposition~\ref{prop:schurBased} we show that an approximate solution of $\Delta x_\mathcal{A}^N$ can be obtained by neglecting all off-diagonal coefficients in the the first block of  (\ref{eq:SchurC_partioned}).  Thereafter, we propose another approximate solution of $\Delta x_\mathcal{A}^N$, as well as one of $\Delta \lambda_\mathcal{I}^N$, which originate from the complementarity blocks of (\ref{eq:PDsyst_partitioned}). These approximate solutions will be labeled with superscript ``$C$'' due to their origin. The solutions are obtained by neglecting the coefficients in the complementarity blocks which approach zero as $\mu \to 0$, i.e., those in $ X_{\mathcal{A} \mathcal{A}}$ and $\Lambdait_{\mathcal{I} \mathcal{I}}$. The resulting partial approximate solutions are given below in Proposition~\ref{prop:compBased}. The essence of both results is that, under certain conditions, the asymptotic component error bounds are in the order of $\mu^2$.
Finally we motive and propose two Newton-like approaches which we later on investigate numerically.
\begin{proposition} \label{prop:schurBased}
Under Assumption~\ref{ass1}, let $\mathcal{B}\left((x^*, \lambda^*),
  \delta\right)$ and $\muhat$ be defined by
Lemma~\ref{lemma:background:FpnonsingCont} and
Lemma~\ref{lemma:LipcPath} respectively. For $(x,\lambda) \in \mathcal{B}((x^*, \lambda^*), \delta)$, let $(\Delta x^N, \Delta \lambda^N)$ be the solution of (\ref{eq:PDsyst}) with $\mu^+ = \sigma \mu$, where $0< \sigma < 1$. If the search direction components are defined as
\begin{equation} \label{eq:prop:schurBased:dx}
\Delta x_i^S = -\frac{ x_i [\nabla f(x)]_i - \mu^+}{ x_i \left[ \nabla^2 f(x)\right]_{ii} + \lambda_i},\qquad \qquad \qquad \qquad \quad \qquad \quad \> i=1,\dots,n,
\end{equation}
then
\begin{equation}\label{eq:prop:schurBased:dxErr}
\Delta x_i^S - \Delta x_i^N =  \frac{x_i}{ x_i \left[ \nabla^2 f(x)\right]_{ii} + \lambda_i}   \sum_{i \neq j} \left[ \nabla^2 f(x)\right]_{ij} \Delta x_j^N, \quad i=1,\dots,n.
\end{equation}
Assume in addition that $0 < \mu \le \muhat$ and $(x,\lambda)$ is sufficiently close to $(x^{\mu}, \lambda^{\mu})\in \mathcal{B}\left((x^*, \lambda^*), \delta\right)$ such that $\| F_{\mu} (x,\lambda) \| = \mathcal{O}(\mu)$. Then there exists $\mubar$, with
$0 <\mubar \le \muhat$, such that for $0 < \mu \leq \mubar$ it holds that
\begin{equation} \label{eq:prop:schurBased:denominator}
\frac{1}{x_i \left[ \nabla^2 f(x)\right]_{ii} + \lambda_i} = \Theta(1), \quad \qquad \quad \qquad \qquad \qquad \qquad \quad i=1,\dots,n,
\end{equation}
and
\begin{equation} \label{eq:prop:schurBasedOrdo}
| \Delta x_i^S - \Delta x^N_i | = \mathcal{O}(\mu^2), \qquad \qquad \qquad \qquad  \qquad \qquad \> i \in \mathcal{A}.
\end{equation}
\end{proposition}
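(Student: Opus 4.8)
The plan is to prove the three claims in sequence, all from the Schur complement form (\ref{eq:SchurC_partioned}), which in this setting carries the right-hand side evaluated at $\mu^+$. First I would write out the $i$-th row of the first block, namely $\bigl([\nabla^2 f(x)]_{ii} + \lambda_i/x_i\bigr)\Delta x_i^N + \sum_{j \neq i}[\nabla^2 f(x)]_{ij}\Delta x_j^N = -\bigl([\nabla f(x)]_i - \mu^+/x_i\bigr)$. The definition (\ref{eq:prop:schurBased:dx}) is exactly what results from discarding the off-diagonal sum in this row and multiplying through by $x_i$, so I would subtract the exact row equation from the diagonal-only equation. The off-diagonal sum survives as a residual, and after multiplying by $x_i$ and dividing by $x_i[\nabla^2 f(x)]_{ii} + \lambda_i$, the identity (\ref{eq:prop:schurBased:dxErr}) follows by pure algebra; this step needs only that the denominator is nonzero, which I verify next.

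Second, for the denominator bound (\ref{eq:prop:schurBased:denominator}) I would split into the two index sets. For $i \in \mathcal{A}$, Lemma~\ref{lemma:Order} gives $x_i = \mathcal{O}(\mu)$ and $\lambda_i = \Theta(1)$, while $[\nabla^2 f(x)]_{ii} = \mathcal{O}(1)$ by continuity of the Hessian on $\mathcal{B}((x^*,\lambda^*),\delta)$; hence $x_i[\nabla^2 f(x)]_{ii} = \mathcal{O}(\mu)$, the $\lambda_i$ term dominates, and the denominator is positive and $\Theta(1)$ for $\mu$ small. For $i \in \mathcal{I}$ the same lemma gives $x_i = \Theta(1)$ and $\lambda_i = \mathcal{O}(\mu)$, so I must instead show $x_i[\nabla^2 f(x)]_{ii}$ is bounded away from zero. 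This is where second-order sufficiency enters: for bound constraints the columns of $Z(x^*)$ span the coordinate subspace indexed by $\mathcal{I}$, so (\ref{eq:OptCond:posdef}) reads $H_{\mathcal{I}\mathcal{I}}(x^*) \succ 0$, forcing $[\nabla^2 f(x^*)]_{ii} > 0$ for $i \in \mathcal{I}$. By continuity these diagonal entries stay bounded away from zero near $x^*$, and since $x_i = \Theta(1)$ and $\lambda_i > 0$, the denominator is again positive and $\Theta(1)$. Taking reciprocals yields (\ref{eq:prop:schurBased:denominator}) for all $i$.

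Finally, the error bound (\ref{eq:prop:schurBasedOrdo}) follows by taking absolute values in (\ref{eq:prop:schurBased:dxErr}) and estimating each factor for $i \in \mathcal{A}$: the prefactor $x_i$ is $\mathcal{O}(\mu)$ by Lemma~\ref{lemma:Order}, the reciprocal of the denominator is $\mathcal{O}(1)$ by the bound just proved, each entry $[\nabla^2 f(x)]_{ij}$ is $\mathcal{O}(1)$, and each $\Delta x_j^N$ is $\mathcal{O}(\mu)$ by Lemma~\ref{lemma:dzBound}, whose hypotheses coincide with those assumed here. The sum over $j \neq i$ is finite, so it does not affect the order, and multiplying the factors gives $\mathcal{O}(\mu) \cdot \mathcal{O}(1) \cdot \mathcal{O}(\mu) = \mathcal{O}(\mu^2)$. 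I expect the main obstacle to be the $i \in \mathcal{I}$ case of the denominator bound, since it is the only step that genuinely uses second-order sufficiency rather than the order estimates of Lemma~\ref{lemma:Order}; the remaining work is algebra and bookkeeping of orders.
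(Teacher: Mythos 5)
Your proof is correct and follows essentially the same route as the paper's: the identity (\ref{eq:prop:schurBased:dxErr}) is obtained by isolating $\Delta x_i^N$ in the $i$-th row of the Schur complement system (\ref{eq:SchurC_partioned}) and subtracting, exactly as in the paper, and the final bound (\ref{eq:prop:schurBasedOrdo}) combines Lemma~\ref{lemma:Order} and Lemma~\ref{lemma:dzBound} with (\ref{eq:prop:schurBased:denominator}) in the same $\mathcal{O}(\mu)\cdot\mathcal{O}(1)\cdot\mathcal{O}(\mu)$ accounting. The one place where you diverge is the denominator bound (\ref{eq:prop:schurBased:denominator}) for $i \in \mathcal{I}$, and there your version is actually the more complete one. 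The paper dispatches this step with ``due to the boundedness of $f$ on $\mathcal{B}((x^*,\lambda^*),\delta)$,'' but boundedness of the Hessian entries only controls the denominator from above (equivalently, bounds its reciprocal from below); since $\lambda_i = \mathcal{O}(\mu)$ for $i \in \mathcal{I}$, keeping the reciprocal bounded \emph{above} genuinely requires $[\nabla^2 f(x)]_{ii}$ to be positive and bounded away from zero on the inactive indices. Your appeal to second-order sufficiency --- that for bound constraints (\ref{eq:OptCond:posdef}) reads $H_{\mathcal{I}\mathcal{I}}(x^*) \succ 0$, so its diagonal entries are strictly positive and remain so nearby by continuity --- is precisely the missing ingredient, and you correctly identified it as the only step that uses more than order bookkeeping. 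So: same decomposition and same key lemmas, but with a sharper justification of (\ref{eq:prop:schurBased:denominator}) than the written proof provides.
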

\begin{proof}
The solution of ($\ref{eq:PDsyst}$) for $\Delta x^N$ is equivalent to the solution of (\ref{eq:SchurC_partioned}) where the $i$'th, $i=1,\dots,n$, row is
\begin{equation} \label{eq:prop:SchurBased:proof:eq0}
\sum_{j \neq i}^n \left[ \nabla^2 f(x) \right]_{ij} \Delta x_{j}^N + \left(\left[  \nabla^2 f(x) \right]_{ii} + \frac{\lambda_i}{x_i} \right) \Delta x_i^N = -\left( \left[\nabla f(x) \right]_i -  \frac{\mu^+}{x_i} \right).
\end{equation} 
If $ x_i \left[ \nabla^2 f(x)\right]_{ii} + \lambda_i \neq 0$ then (\ref{eq:prop:SchurBased:proof:eq0}) can be written as
\begin{align} \label{eq:prop:SchurBased:proof:eq1}
\Delta x_i^N & = \frac{x_i}{  x_i \left[ \nabla^2 f(x) \right]_{ii} + \lambda_i} \left(-\left( \left[\nabla f(x) \right]_i -  \frac{\mu^+}{x_i} \right) - \sum_{j \neq i}^n \left[ \nabla^2 f(x) \right]_{ij} \Delta x_{j}^N \right) \nonumber  \\ 
& = -\frac{ x_i [\nabla f(x)]_i - \mu^+}{ x_i \left[ \nabla^2 f(x)\right]_{ii} + \lambda_i} -\frac{x_i}{ x_i \left[ \nabla^2 f(x)\right]_{ii} + \lambda_i} \sum_{j \neq i}^n \left[ \nabla^2 f(x) \right]_{ij} \Delta x_{j}^N. 
\end{align}
Subtraction of (\ref{eq:prop:SchurBased:proof:eq1}) from (\ref{eq:prop:schurBased:dx}) gives (\ref{eq:prop:schurBased:dxErr}). By Lemma~\ref{lemma:Order} there exists $\mubar_3$, with \linebreak $0<\mubar_3 \leq \muhat$ such that the components of $(x,\lambda)$ satisfy (\ref{eq:lemma:Order}). Due to the boundedness of $f$ on $\mathcal{B}\left((x^*, \lambda^*), \delta \right)$ there exists $\mubar_4$, with $0<\mubar_4 \leq \muhat$, such that  (\ref{eq:prop:schurBased:denominator}) holds for $0<\mu \leq \mubar$ with $\mubar = \min\{\mubar_3, \mubar_4\}$. The result of (\ref{eq:prop:schurBasedOrdo}) follows from application of Lemma~\ref{lemma:Order} and Lemma~\ref{lemma:dzBound} to (\ref{eq:prop:schurBased:dxErr}) while taking (\ref{eq:prop:schurBased:denominator}) into account.
\end{proof}
The approximate solution $\Delta x^S$ in (\ref{eq:prop:schurBased:dx}) of Proposition~\ref{prop:schurBased} and its corresponding component error (\ref{eq:prop:schurBased:dxErr}) may be undefined for certain components. However, the essence is that the expressions are well-defined sufficiently close to the barrier trajectory for sufficiently small $\mu$, as shown by (\ref{eq:prop:schurBased:denominator}). In particular the component errors of (\ref{eq:prop:schurBasedOrdo}) are bounded by a constant times $\mu^2$ only for components $i \in \mathcal{A}$, although the expressions (\ref{eq:prop:schurBased:dx}) and associated errors (\ref{eq:prop:schurBased:dxErr}) hold for all components $i=1,\dots,n$. An approximate solution that is guaranteed to have all its components well-defined can be obtained from the complementarity blocks of (\ref{eq:PDsyst_partitioned}). This approximate solution, and in addition an approximate solution of $\Delta \lambda_\mathcal{I}^N$, are given in the proposition below. 
\begin{proposition} \label{prop:compBased}
Under Assumption~\ref{ass1}, let $\mathcal{B}\left((x^*, \lambda^*),
  \delta\right)$ and $\muhat$ be defined by
Lemma~\ref{lemma:background:FpnonsingCont} and
Lemma~\ref{lemma:LipcPath} respectively. For $(x,\lambda) \in \mathcal{B}((x^*, \lambda^*), \delta)$, let $(\Delta x^N, \Delta \lambda^N)$ be the solution of (\ref{eq:PDsyst}) with $\mu^+ = \sigma \mu$, where $0< \sigma < 1$. If the search direction components are defined as
\begin{subequations}\label{eq:prop:compBased:dxdlambda}
\begin{align} 
\Delta x_i^C &= - x_i + \frac{\mu^+}{\lambda_i}, \qquad \> & i=1,\dots,n, \label{eq:prop:compBased:dx} \\
\Delta \lambda_i^C &= - \lambda_i + \frac{\mu^+}{x_i}, \qquad \> & i=1,\dots,n,\label{eq:prop:compBased:dlambda}
\end{align}
\end{subequations}
then
\begin{subequations} \label{eq:prop:compBased:ErrBoth}
\begin{align} 
\Delta x_i^C - \Delta x_i^N & =  \frac{x_i}{\lambda_i} \Delta \lambda_i^N,  & i=1,\dots,n, \label{eq:prop:compBased:dxErr} \\
\Delta \lambda_i^C - \Delta \lambda_i^N & =   \frac{\lambda_i}{x_i} \Delta x_i^N,  & i=1,\dots,n.\label{eq:prop:compBased:dlambdaErr} 
\end{align}
\end{subequations}
Assume in addition that $0 < \mu \le \muhat$ and $(x,\lambda)$ is sufficiently close to $(x^{\mu}, \lambda^{\mu})\in \mathcal{B}\left((x^*, \lambda^*), \delta\right)$ such that $\| F_{\mu} (x,\lambda) \| = \mathcal{O}(\mu)$. Then there exists $\mubar$, with
$0 <\mubar \le \muhat$, such that for $0 < \mu \leq \mubar$ it holds that
\begin{subequations} \label{eq:prop:compBased:Ordo}
\begin{align} 
& | \Delta x_i^C - \Delta x_i^N | = \mathcal{O}(\mu^2), &  i \in \mathcal{A} , \label{eq:prop:compBased:dxOrdo} \\
& | \Delta \lambda_i^C - \Delta \lambda_i^N | =  \mathcal{O}(\mu^2), & i \in \mathcal{I}.\>\label{eq:prop:compBased:dlambdaOrdo} 
\end{align} 
\end{subequations}
\end{proposition}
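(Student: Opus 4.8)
The plan is to read off the two error identities directly from the complementarity blocks of the partitioned system (\ref{eq:PDsyst_partitioned}), exactly as Proposition~\ref{prop:schurBased} was derived from the Schur complement (\ref{eq:SchurC_partioned}). Row $i$ of the third and fourth blocks, evaluated with $\mu^+=\sigma\mu$ as in the setup, is the scalar equation $\lambda_i \Delta x_i^N + x_i \Delta \lambda_i^N = -(\lambda_i x_i - \mu^+)$. Since every iterate is assumed to satisfy $x>0$ and $\lambda>0$, both $\lambda_i$ and $x_i$ are nonzero, so I may divide by either factor without any caveat; this is precisely what makes (\ref{eq:prop:compBased:dxdlambda}) well defined for all $i$, in contrast to (\ref{eq:prop:schurBased:dx}) where a nonvanishing denominator had to be argued separately.

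First I would establish the error identities (\ref{eq:prop:compBased:ErrBoth}). Dividing the row equation by $\lambda_i$ and isolating $\Delta x_i^N$ gives $\Delta x_i^N = -x_i + \mu^+/\lambda_i - (x_i/\lambda_i)\Delta\lambda_i^N$; the first two terms are exactly $\Delta x_i^C$ from (\ref{eq:prop:compBased:dx}), so subtracting yields (\ref{eq:prop:compBased:dxErr}). Symmetrically, dividing by $x_i$ and isolating $\Delta\lambda_i^N$ produces $\Delta\lambda_i^N = -\lambda_i + \mu^+/x_i - (\lambda_i/x_i)\Delta x_i^N$, whose first two terms are $\Delta\lambda_i^C$ from (\ref{eq:prop:compBased:dlambda}), giving (\ref{eq:prop:compBased:dlambdaErr}). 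Both identities hold for every $i=1,\dots,n$ and require nothing beyond the scalar algebra of the complementarity rows.

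Next I would convert the identities into the order estimates (\ref{eq:prop:compBased:Ordo}). The hypotheses of the additional clause are exactly those of Lemma~\ref{lemma:Order} and Lemma~\ref{lemma:dzBound}, so I may invoke both: Lemma~\ref{lemma:Order} supplies the componentwise orders in (\ref{eq:lemma:Order}), while Lemma~\ref{lemma:dzBound}, applied with the same $\mu^+=\sigma\mu$, gives $|\Delta x_i^N| = \mathcal{O}(\mu)$ and $|\Delta\lambda_i^N| = \mathcal{O}(\mu)$. For $i\in\mathcal{A}$ we have $x_i = \mathcal{O}(\mu)$ and $\lambda_i = \Theta(1)$, hence $x_i/\lambda_i = \mathcal{O}(\mu)$, and multiplying by $|\Delta\lambda_i^N| = \mathcal{O}(\mu)$ in (\ref{eq:prop:compBased:dxErr}) yields (\ref{eq:prop:compBased:dxOrdo}). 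For $i\in\mathcal{I}$ we have $\lambda_i = \mathcal{O}(\mu)$ and $x_i = \Theta(1)$, hence $\lambda_i/x_i = \mathcal{O}(\mu)$, and multiplying by $|\Delta x_i^N| = \mathcal{O}(\mu)$ in (\ref{eq:prop:compBased:dlambdaErr}) yields (\ref{eq:prop:compBased:dlambdaOrdo}). Taking $\mubar$ to be the minimum of the thresholds produced by the two lemmas furnishes the claimed $\mubar$.

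There is no serious obstacle here; the derivation is the scalar analogue of Proposition~\ref{prop:schurBased}, and the only point requiring attention is the \emph{asymmetry of the index sets}. The ratio $x_i/\lambda_i$ is $\mathcal{O}(\mu)$ only on $\mathcal{A}$ (on $\mathcal{I}$ it is $\Theta(1)/\mathcal{O}(\mu)$ and need not be small at all), and symmetrically $\lambda_i/x_i$ is $\mathcal{O}(\mu)$ only on $\mathcal{I}$. This is exactly why the $\mathcal{O}(\mu^2)$ bounds are asserted for $\Delta x^C$ on $\mathcal{A}$ and for $\Delta\lambda^C$ on $\mathcal{I}$, and not for all components, mirroring the restriction to $\mathcal{A}$ in (\ref{eq:prop:schurBasedOrdo}).
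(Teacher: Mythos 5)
Your proposal is correct and follows essentially the same route as the paper's own proof: the error identities are obtained by dividing the complementarity row $\lambda_i \Delta x_i^N + x_i \Delta \lambda_i^N = -\lambda_i x_i + \mu^+$ by $\lambda_i$ and by $x_i$ respectively, and the $\mathcal{O}(\mu^2)$ bounds then follow by combining Lemma~\ref{lemma:Order} (componentwise orders of $x_i$, $\lambda_i$) with Lemma~\ref{lemma:dzBound} ($\mathcal{O}(\mu)$ bound on the Newton step). Your explicit componentwise case analysis on $\mathcal{A}$ versus $\mathcal{I}$ simply spells out what the paper compresses into its final sentence, so there is nothing to add.
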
 
\begin{proof}
The $i$'th,  $i=1,\dots,n$, row in the second block of ($\ref{eq:PDsyst}$) is 
\begin{equation*}
\lambda_i \Delta x_i^N + x_i \Delta \lambda_i^N = - \lambda_i x_i + \mu^+,
\end{equation*}
For $x_i > 0$, $\lambda_i > 0$, $i,\dots, n$, it holds that
\begin{subequations} \label{eq:prop:compBased:eq3}
\begin{align} 
\Delta x_i^N  &= - x_i + \frac{\mu^+}{\lambda_i} - \frac{x_i}{\lambda_i} \Delta \lambda_i^N,
\label{eq:prop:compBased:proof:eq1} \\
\Delta \lambda_i^N &= - \lambda_i + \frac{\mu^+}{x_i} - \frac{\lambda_i}{x_i} \Delta x_i^N.  \label{eq:prop:compBased:proof:eq2}
\end{align} 
\end{subequations}
Subtraction of (\ref{eq:prop:compBased:proof:eq1}) from  (\ref{eq:prop:compBased:dx}) and subtraction of (\ref{eq:prop:compBased:proof:eq2}) from  (\ref{eq:prop:compBased:dlambda}) gives (\ref{eq:prop:compBased:dxErr}) and (\ref{eq:prop:compBased:dlambdaErr}) respectively. By Lemma~\ref{lemma:Order} there exists $\mubar$, with $0<\mubar \leq \muhat$ such that the components of $(x,\lambda)$ satisfy (\ref{eq:lemma:Order}) for $0<\mu \leq \mubar$. The result of (\ref{eq:prop:compBased:Ordo}) then follows from application of Lemma~\ref{lemma:dzBound} to (\ref{eq:prop:compBased:eq3}) while taking (\ref{eq:lemma:Order}) into account.
\end{proof}
The expressions for $\Delta x^C_i$ and $\Delta \lambda^C_i$,  (\ref{eq:prop:compBased:dx}) and (\ref{eq:prop:compBased:dlambda}) respectively, and their associated component errors (\ref{eq:prop:compBased:dxErr}) and (\ref{eq:prop:compBased:dlambdaErr}) respectively, hold for all components. The essence of the results in Proposition~\ref{prop:compBased} is that the component errors are bounded by a constant times $\mu^2$ only for certain components. Specifically, for $\Delta x^C_i$, $i \in \mathcal{A}$, and $\Delta \lambda_i^C$, $i \in \mathcal{I}$. Both $\Delta x^S_i$ given by (\ref{eq:prop:schurBased:dx}) and $\Delta x^C_i$ given by (\ref{eq:prop:compBased:dx}) provide approximate solutions of $\Delta x_i^N$, $i \in \mathcal{A}$, with similar asymptotic error bounds. Note that the order of the approximation error, $\| \Delta x_\mathcal{A} - \Delta x_\mathcal{A}^N \|$, is maintained even if some components $i \in \mathcal{A}$ are updated with (\ref{eq:prop:schurBased:dx}) and others with (\ref{eq:prop:compBased:dx}). Which expression to use can hence be chosen individually for each index $i \in \mathcal{A}$. The factors in front of $\Delta x_i^N$ and $\Delta \lambda_i^N$, $i =1,\dots, n$, in the component errors of (\ref{eq:prop:schurBased:dxErr}) and (\ref{eq:prop:compBased:ErrBoth}) respectively may be used as an indicator for which of the approximations to use, and also whether either expression is likely to provide an accurate approximation. 
Note also that the approximate solution $\Delta x^C$ given by (\ref{eq:prop:compBased:dx}) does not take into account any information from the first block equation of (\ref{eq:PDsyst}), whereas $\Delta x^S$ given by (\ref{eq:prop:schurBased:dx}) includes information from both blocks.

Provided that the norm of the combined steps $\Delta x_\mathcal{A}^N$ and $\Delta \lambda_\mathcal{I}^N$ is not smaller than the approximation error, then stepping in these components with (\ref{eq:prop:schurBased:dx}) or (\ref{eq:prop:compBased:dxdlambda}) give a vector which is not further from the Newton iterate. This is formalized in Proposition~\ref{prop:Astep} below. 
\begin{proposition} \label{prop:Astep}
Under Assumption~\ref{ass1}, let $\mathcal{B}\left((x^*, \lambda^*),
  \delta\right)$ and $\muhat$ be defined by
Lemma~\ref{lemma:background:FpnonsingCont} and
Lemma~\ref{lemma:LipcPath} respectively. For $(x,\lambda) \in \mathcal{B}((x^*, \lambda^*), \delta)$, define \linebreak $(x_+^{N},\lambda_+^{N})=(x,\lambda)+ (\Delta x^N,\Delta \lambda^N)$ where $(\Delta x^N, \Delta \lambda^N)$ is the solution of (\ref{eq:PDsyst}) with $\mu^+ = \sigma \mu$, where $0< \sigma < 1$. Moreover, let $(x_+, \lambda_+) = (x,\lambda) + (\Delta x, \Delta \lambda)$ where 
\begin{equation} \label{eq:prop:partialAstep}
\Delta x_i = \begin{cases} \Delta x_i^S \mbox{ or } \Delta x_i^C & i \in \mathcal{A}, \\
0 & i \in \mathcal{I},  \end{cases} \qquad \Delta \lambda_i = \begin{cases} 0 & i \in \mathcal{A}, \\
\Delta \lambda_i^C & i \in \mathcal{I}, \end{cases} 
\end{equation}
with  $\Delta x_i^C$, $\Delta \lambda_i^C$ and $\Delta x_i^S$ given by (\ref{eq:prop:compBased:dxdlambda}) and (\ref{eq:prop:schurBased:dx}) respectively. Assume that \linebreak $0 < \mu \le \muhat$, $\| (\Delta x_\mathcal{A}^N, \Delta \lambda_\mathcal{I}^N) \|
=\Omega ( \mu^\gamma )$ for $0\le \gamma < 2$, and $(x,\lambda)$ is sufficiently close to $(x^{\mu}, \lambda^{\mu})\in \mathcal{B}\left((x^*, \lambda^*), \delta\right)$ such that $\| F_{\mu} (x,\lambda) \| = \mathcal{O}(\mu)$. Then there exists $\mubar$, with
$0 <\mubar \le \muhat$, such that for $0 < \mu \leq \mubar$ it holds that
\begin{equation} \label{eq:prop:partialAstepCloser}
\| (x_+^N,\lambda_+^N) - (x_+,\lambda_+)  \|  \le \| (x_+^N,\lambda_+^N) - (x,\lambda)  \|.
\end{equation}
\end{proposition}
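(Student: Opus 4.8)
The plan is to rewrite both sides of (\ref{eq:prop:partialAstepCloser}) as norms of step-differences and then exploit the disjoint index structure of the partial step. Since $(x_+^N,\lambda_+^N)-(x,\lambda) = (\Delta x^N, \Delta \lambda^N)$ and $(x_+^N,\lambda_+^N) - (x_+,\lambda_+) = (\Delta x^N - \Delta x, \Delta \lambda^N - \Delta \lambda)$, the claim is equivalent to
\begin{equation*}
\| (\Delta x^N - \Delta x, \Delta \lambda^N - \Delta \lambda) \| \le \| (\Delta x^N, \Delta \lambda^N) \|.
\end{equation*}
I would work throughout with the squared $2$-norms, since this is precisely what makes the untouched Newton blocks cancel exactly rather than merely being bounded against one another.

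First I would decompose each side over the index sets $\mathcal{A}$ and $\mathcal{I}$. By the definition (\ref{eq:prop:partialAstep}) the step $(\Delta x, \Delta \lambda)$ is nonzero only in the $x_{\mathcal{A}}$ and $\lambda_{\mathcal{I}}$ components, so in the difference $(\Delta x^N - \Delta x, \Delta \lambda^N - \Delta \lambda)$ the inactive-$x$ and active-$\lambda$ blocks are left untouched and equal $\Delta x_{\mathcal{I}}^N$ and $\Delta \lambda_{\mathcal{A}}^N$, exactly as on the right-hand side. Writing out both squared norms and cancelling the common terms $\|\Delta x_{\mathcal{I}}^N\|^2$ and $\|\Delta \lambda_{\mathcal{A}}^N\|^2$ reduces the claim to
\begin{equation*}
\| \Delta x_{\mathcal{A}}^N - \Delta x_{\mathcal{A}} \|^2 + \| \Delta \lambda_{\mathcal{I}}^N - \Delta \lambda_{\mathcal{I}} \|^2 \le \| (\Delta x_{\mathcal{A}}^N, \Delta \lambda_{\mathcal{I}}^N) \|^2.
\end{equation*}

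Next I would bound the two sides separately. The entries of the left-hand side are exactly the component errors $\Delta x_i^S - \Delta x_i^N$ (or $\Delta x_i^C - \Delta x_i^N$) for $i\in\mathcal{A}$ and $\Delta \lambda_i^C - \Delta \lambda_i^N$ for $i\in\mathcal{I}$; by Proposition~\ref{prop:schurBased} and Proposition~\ref{prop:compBased} each is $\mathcal{O}(\mu^2)$ on a suitable $(0,\mubar]$, so the finite sum of their squares is $\mathcal{O}(\mu^4)$. For the right-hand side I would invoke the hypothesis $\|(\Delta x_{\mathcal{A}}^N, \Delta \lambda_{\mathcal{I}}^N)\| = \Omega(\mu^\gamma)$, which upon squaring gives $\|(\Delta x_{\mathcal{A}}^N, \Delta \lambda_{\mathcal{I}}^N)\|^2 = \Omega(\mu^{2\gamma})$.

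The concluding step, and the only place where $\gamma < 2$ enters, is the order comparison: the reduced inequality reads $\mathcal{O}(\mu^4) \le \Omega(\mu^{2\gamma})$, and since $2\gamma < 4$ the factor $\mu^{4-2\gamma}$ tends to $0$, so there is $\mubar$ below which the bounding constant of the left side times $\mu^4$ drops under the lower-bound constant of the right side times $\mu^{2\gamma}$. I do not anticipate a real obstacle here; the substance is entirely in the bookkeeping of the first two steps, and the hypotheses have been arranged to make this final comparison decisive. The one point requiring care is that all the per-component $\mathcal{O}(\mu^2)$ bounds and the $\Omega(\mu^\gamma)$ hypothesis must hold simultaneously, so $\mubar$ should be taken as the minimum of the thresholds supplied by Propositions~\ref{prop:schurBased} and \ref{prop:compBased} together with the threshold from the order comparison.
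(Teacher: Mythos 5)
Your proposal is correct and follows essentially the same route as the paper's proof: the same cancellation of the untouched $\Delta x_{\mathcal{I}}^N$ and $\Delta\lambda_{\mathcal{A}}^N$ blocks in the squared norms, the same appeal to Propositions~\ref{prop:schurBased} and \ref{prop:compBased} for the $\mathcal{O}(\mu^2)$ component errors, and the same final comparison $C_7^2\mu^4 \le C_8^2\mu^{2\gamma}$ valid for $\mu \le (C_8/C_7)^{1/(2-\gamma)}$, with $\mubar$ taken as the minimum of all thresholds. No gaps.
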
 
\begin{proof}
With $(\Delta x, \Delta \lambda)$ defined as in (\ref{eq:prop:partialAstep}) of the proposition it holds that
\begin{align}
&\| (x_+^N,\lambda_+^N) - (x_+,\lambda_+)  \| ^2 - \| (x_+^N,\lambda_+^N) - (x,\lambda)  \| ^2 = \nonumber \\
&  \| (\Delta x^N - \Delta x, \Delta \lambda^N - \Delta \lambda) \| ^2 - \| (\Delta x^N, \Delta \lambda^N) \| ^2 = \nonumber \\
&  \| (\Delta x_\mathcal{A}^N - \Delta x_\mathcal{A}, \Delta \lambda_\mathcal{I}^N - \Delta \lambda_\mathcal{I}) \| ^2 - \| (\Delta x_\mathcal{A}^N, \Delta \lambda_\mathcal{I}^N) \|^2. \label{eq:lemma:stepEq1}
\end{align}
By Proposition~\ref{prop:schurBased} and Proposition~\ref{prop:compBased} there exists $\mubar_5$ and $\mubar_6$ respectively, with $0 < \mubar_i \le \muhat$, $i=5,6$ such that for  $\Delta x_i$ equal to $\Delta x_i^S \mbox{ or } \Delta x_i^C$ it holds that \linebreak $| \Delta x_i - \Delta x_i^N | =\mathcal{O}(\mu^2)$, $i \in \mathcal{A}$, for $0 < \mu \leq \min\{ \mubar_5, \mubar_6 \}$. By Proposition~\ref{prop:compBased} it also holds that $| \Delta \lambda_i^C - \Delta \lambda_i^N | = \mathcal{O}(\mu^2)$, $i \in \mathcal{I}$, for $0 < \mu \leq \mubar_6$. Hence, for $0<\mu \leq \min \{ \mubar_5, \mubar_6 \}$, there exist constants $C_7>0$ and $C_8>0$, where $C_8$ comes from the condition $\| (\Delta x_\mathcal{A}^N, \Delta \lambda_\mathcal{I}^N) \|=\Omega ( \mu^\gamma )$, $0\leq \gamma < 2$, such that
\begin{equation} \label{eq:lemma:stepEq2}
\| (\Delta x_\mathcal{A}^N - \Delta x_\mathcal{A}, \Delta \lambda_\mathcal{I}^N - \Delta \lambda_\mathcal{I}) \| ^2 - \| (\Delta x_\mathcal{A}^N, \Delta \lambda_\mathcal{I}^N) \| ^2    \leq C_7^2 \mu^4 - C_8^2 \mu^{2\gamma}.
\end{equation}
The right-hand side of (\ref{eq:lemma:stepEq2}) is non-positive for $0<\mu \leq (C_8/C_7)^{\frac{1}{2-\gamma}}$, $0\leq \gamma < 2$. Combining (\ref{eq:lemma:stepEq1})-(\ref{eq:lemma:stepEq2}) with $\mubar = \min\{\mubar_5, \mubar_6, (C_8/C_7)^{\frac{1}{2-\gamma}} \}$ gives the result.
\end{proof}
The partial approximate solution (\ref{eq:prop:partialAstep}) of Proposition~\ref{prop:Astep} is computationally inexpensive compared to solving (\ref{eq:PDsyst}). In consequence, (\ref{eq:prop:partialAstepCloser}) motivates the study of Newton-like approaches which make use of (\ref{eq:prop:partialAstep}). We will construct two such approaches where the idea is to utilize the intermediate iterate 
\begin{equation} \label{eq:zE}
(x^{E}, \lambda^{E}) = (x+\Delta x^E, \lambda +\Delta \lambda^E),
\end{equation} 
with $(\Delta x^E, \Delta \lambda^E)$ as in (\ref{eq:prop:partialAstep}). It is thus only the active components of $x$ and inactive components of $\lambda$ that is updated in the step to $(x^E, \lambda^E)$. For simplicity we describe the ideas for unit step length, in practice the iterates would be required to be strictly feasible.

The first approach is based on the fact that solving a Newton system from the iterate $(x^E,\lambda^E)$ provides potential improvement, provided that $(x^E, \lambda^E)$ is strictly feasible and lies in $\mathcal{B}((x^*, \lambda^*), \delta)$. A full iteration in the approach consists of the \emph{approximate intermediate step} (\ref{eq:zE}) together with the solution of 
\begin{equation}
F'(x^E,\lambda^E) \begin{bmatrix}
\Delta x \\  \Delta \lambda
\end{bmatrix} = - F_\mu (x^E, \lambda^E),   \label{eq:modNewtonAstep}\\
\end{equation}
and the step $(x^E + \Delta x, \lambda^E + \Delta \lambda)$. 

The idea of the second approach is to update the coefficients in the complementarity blocks of the matrix in (\ref{eq:PDsyst}). The approach may hence under strict complementarity be interpreted as an \emph{approximate higher-order} method.  A full iteration consists of the step (\ref{eq:zE}), the solution of 
\begin{equation}
\begin{bmatrix}
H & -I \\ \Lambdait^E & X^E
\end{bmatrix} \begin{bmatrix}
\Delta x \\  \Delta \lambda
\end{bmatrix}  = - F_\mu (x, \lambda), \label{eq:modNewtonHO}
\end{equation}
where $\Lambdait^E = \mbox{diag}(\lambda^E)$ and $X^E = \mbox{diag}(x^E)$, together with the step $(x+\Delta x, \lambda +\Delta \lambda)$. The approach may hence also be interpreted as a modified Newton method where the Jacobian of each Newton system is altered. 

Numerical results for the \emph{approximate intermediate step} and the \emph{approximate higher-order} approach are shown in Section~\ref{sec:numRes}. The results are for bound-constrained quadratic optimization problems where strict complementarity typically does not hold. 
The complexity of each iteration in both approaches is the same as with Newton's method. The hope is thus to reduce the total number of iteration necessary for convergence.  See the work by  Gondzio and Sobral \cite{GonSob2019} for quasi-Newton approaches for quadratic problems where each iteration is inexpensive in comparison to the approaches above. 
\subsection{Full approximate solution} \label{subsec:fullApp}
In this section we propose approximate solutions of (\ref{eq:PDsyst}) that, in the considered framework, have an asymptotic error bound in the order of $\mu^2$. The full approximate solutions are obtained by utilizing either of the partial approximate solutions of $\Delta x_\mathcal{A}^N$ in Proposition~\ref{prop:schurBased} or Proposition~\ref{prop:compBased} while exploiting structure in the systems that arise. Specifically, suppose that an approximate $\Delta x_{\mathcal{A}}$ is given, e.g., $\Delta x^S_\mathcal{A}$ given by (\ref{eq:prop:schurBased:dx}) or $\Delta x^C_\mathcal{A}$ given by (\ref{eq:prop:compBased:dx}). Insertion of the approximate $\Delta x_{\mathcal{A}}$ into (\ref{eq:PDsyst_partitioned}) yields
\begin{equation}\label{eq:redPDsyst_partitioned}
\begin{bmatrix}
H_{\mathcal{A} \mathcal{I}}  & -I_{\mathcal{A} \mathcal{A}} &  \\
H_{\mathcal{I} \mathcal{I}}  &  & -I_{\mathcal{I}\mathcal{I}} \\
 & X_{\mathcal{A} \mathcal{A}} &  \\
\Lambdait_{\mathcal{I} \mathcal{I}} &  & X_{\mathcal{I} \mathcal{I}} 
\end{bmatrix} \begin{bmatrix} \Delta x_\mathcal{I}^{ls} \\ \Delta \lambda_\mathcal{A}^{ls} \\ \Delta \lambda_\mathcal{I}^{ls}
\end{bmatrix} = -\begin{bmatrix}
\nabla f(x)_\mathcal{A} - \lambda_\mathcal{A}+ H_{\mathcal{A} \mathcal{A}} \Delta x_{\mathcal{A}} \\
\nabla f(x)_\mathcal{I} - \lambda_\mathcal{I}+ H_{\mathcal{I} \mathcal{A}} \Delta x_{\mathcal{A}} \\
\Lambdait_{\mathcal{A} \mathcal{A}} X_{\mathcal{A} \mathcal{A}} e - \mu e + \Lambdait_{\mathcal{A} \mathcal{A}} \Delta x_{\mathcal{A}} \\
\Lambdait_{\mathcal{I} \mathcal{I}} X_{\mathcal{I} \mathcal{I}} e - \mu e
\end{bmatrix},
\end{equation}
where the solution is given the superscript ``$ls$'' since it will lead to a least squares system. The second and fourth block of (\ref{eq:redPDsyst_partitioned}) provide unique solutions of $\Delta x^{ls}_\mathcal{I}$ and $\Delta \lambda^{ls}_\mathcal{I}$ which satisfy
\begin{equation} \label{eq:Redx2syst}
\begin{bmatrix}
H_{\mathcal{I} \mathcal{I}}  & -I_{\mathcal{I}\mathcal{I}} \\
\Lambdait_{\mathcal{I} \mathcal{I}} & X_{\mathcal{I} \mathcal{I}} 
\end{bmatrix} \begin{bmatrix} \Delta x_\mathcal{I}^{ls} \\ \Delta \lambda_\mathcal{I}^{ls}
\end{bmatrix} = -\begin{bmatrix}
\nabla f(x)_\mathcal{I} - \lambda_\mathcal{I} + H_{\mathcal{I} \mathcal{A}} \Delta x_{\mathcal{A}}  \\
\Lambdait_{\mathcal{I} \mathcal{I}} X_{\mathcal{I} \mathcal{I}} e - \mu e
\end{bmatrix}.
\end{equation}
The solution of (\ref{eq:Redx2syst}) can be obtained by first solving with the Schur complement of $X_{\mathcal{I} \mathcal{I}}$
\begin{equation}  \label{eq:Redx2syst_schur}
\left(H_{\mathcal{I} \mathcal{I}} + X_{\mathcal{I} \mathcal{I}} \inv \Lambdait_{\mathcal{I} \mathcal{I}}  \right) \Delta x_\mathcal{I}^{ls}  = -\left( \nabla f(x)_\mathcal{I}  + H_{\mathcal{I} \mathcal{A}} \Delta x_{\mathcal{A}} \right)  + \mu X_{\mathcal{I} \mathcal{I}} \inv  e,
\end{equation}
and then 
\begin{equation} \label{eq:fullApprox:dlambdaI}
\Delta \lambda_{\mathcal{I}}^{ls} = - X_{\mathcal{I} \mathcal{I}} \inv \left( \Lambdait_{\mathcal{I} \mathcal{I}} X_{\mathcal{I} \mathcal{I}} e - \mu e \right)  - X_{\mathcal{I} \mathcal{I}} \inv \Lambdait_{\mathcal{I} \mathcal{I}} \Delta x_{\mathcal{I}}^{ls}. 
\end{equation}
Note that (\ref{eq:Redx2syst_schur}) can also be obtained by insertion of the given $\Delta x_\mathcal{A}$ into the second block of (\ref{eq:SchurC_partioned}). The matrix of (\ref{eq:Redx2syst_schur}) is by Assumption~\ref{ass1} a symmetric positive definite $( |\mathcal{I}|$$  \times $$|\mathcal{I}| )$-matrix. Moreover, the matrix does not become increasingly ill-conditioned due to large elements in $X\inv \Lambdait$, under strict complementarity as $\mu \to 0$, in contrast to the matrix of (\ref{eq:SchurC_partioned}). The remanding part of the solution of (\ref{eq:redPDsyst_partitioned}), that is $\Delta \lambda_\mathcal{A}^{ls}$, is then given by
\begin{equation} \label{eq:LS:overdet}
\begin{bmatrix}
-I_{\mathcal{A} \mathcal{A}} \\
X_{\mathcal{A} \mathcal{A}} 
\end{bmatrix} \Delta \lambda_{\mathcal{A}}^{ls} = -\begin{bmatrix}
\nabla f(x)_\mathcal{A} + \lambda_\mathcal{A} + H_{\mathcal{A}   \mathcal{A}}  \Delta x_{\mathcal{A}}+ H_{\mathcal{A} \mathcal{I}} \Delta x_{\mathcal{I}}^{ls}\\
\Lambdait_{\mathcal{A} \mathcal{A}} X_{\mathcal{A} \mathcal{A}} e - \mu e +\Lambdait_{\mathcal{A} \mathcal{A}} \Delta x_{\mathcal{A}}
\end{bmatrix}.
\end{equation}
If the approximate $\Delta x_\mathcal{A}$ is exact, i.e., if $\Delta x_\mathcal{A} = \Delta x_\mathcal{A}^N$, then $\Delta x_\mathcal{I}^{ls}=  \Delta x_\mathcal{I}^N$ by (\ref{eq:Redx2syst_schur}). In consequence, the over-determined system (\ref{eq:LS:overdet}) has a unique solution that satisfies all equations, i.e., $\Delta \lambda_\mathcal{A}^{ls}$ is the corresponding part of the solution to (\ref{eq:PDsyst}). The solutions corresponding to the first and second block equation of (\ref{eq:LS:overdet}) will be assigned superscripts ``$b$'' and ``$-$'' respectively. These are given by
\begin{subequations}
\begin{equation} \label{eq:fullApprox:dlambdaAsimple1}
\Delta \lambda_{\mathcal{A}}^b = \nabla f(x)_\mathcal{A} + \lambda_\mathcal{A} + H_{\mathcal{A}   \mathcal{A}}  \Delta x_{\mathcal{A}}+ H_{\mathcal{A} \mathcal{I}} \Delta x_{\mathcal{I}}^{ls},
\end{equation}
and
\begin{equation}  \label{eq:fullApprox:dlambdaAsimple2}
\Delta \lambda_{\mathcal{A}}^{-} = - \lambda_\mathcal{A} + \mu X_{\mathcal{A} \mathcal{A}} \inv e + X_{\mathcal{A} \mathcal{A}} \inv \Lambdait_{\mathcal{A} \mathcal{A}} \Delta x_{\mathcal{A}}.
\end{equation}
\end{subequations} 
Alternatively, $\Delta \lambda_{A}^{ls}$ can be obtained as the least squares solution of (\ref{eq:LS:overdet}) that is
\begin{align} 
\Delta \lambda_{\mathcal{A}}^{ls} =  & \left( I_{\mathcal{A} \mathcal{A}} +
X_{\mathcal{A} \mathcal{A}}^2 \right) \inv \Big[
\nabla f(x)_\mathcal{A} + \lambda_\mathcal{A} + H_{\mathcal{A}  \mathcal{A}} \Delta x_{\mathcal{A}} + H_{\mathcal{A} \mathcal{I}} \Delta x_{\mathcal{I}}^{ls} \nonumber \\
& - X_{\mathcal{A} \mathcal{A}} \left( \Lambdait_{\mathcal{A} \mathcal{A}} X_{\mathcal{A} \mathcal{A}} e - \mu e +\Lambdait_{\mathcal{A} \mathcal{A}} \Delta x_{\mathcal{A}} \right) \Big]. \label{eq:fullApprox:dlambdaAls}
\end{align}
In Theorem~\ref{thm:simplifiedCase} it is shown that, under certain conditions, both $\Delta \lambda_{\mathcal{A}}^b$ given by (\ref{eq:fullApprox:dlambdaAsimple1}) and $\Delta \lambda_{\mathcal{A}}^{ls}$ given by (\ref{eq:fullApprox:dlambdaAls}) can be used to approximate $\Delta \lambda_\mathcal{A}^N$ without affecting the order of the asymptotic error. Note however that this is not true for $\Delta \lambda_{\mathcal{A}}^{-}$ given by (\ref{eq:fullApprox:dlambdaAsimple2}) due to the last term that contains $X_{\mathcal{A} \mathcal{A}} \inv$ in combination with approximation error.
\begin{theorem} \label{thm:simplifiedCase}
Under Assumption~\ref{ass1}, let $\mathcal{B}\left((x^*, \lambda^*),
  \delta\right)$ and $\muhat$ be defined by \linebreak
Lemma~\ref{lemma:background:FpnonsingCont} and
Lemma~\ref{lemma:LipcPath} respectively. For  $0< \mu \leq \muhat$ and $(x,\lambda) \in \mathcal{B}((x^*, \lambda^*), \delta)$, let $(\Delta x^N, \Delta \lambda^N)$ be the solution of (\ref{eq:PDsyst}) with $\mu^+ = \sigma \mu$, where $0< \sigma < 1$. Moreover, let the search direction components be defined as
\begin{equation*}
\Delta x_i = \begin{cases} \Delta x_i^S \textrm{ or } \Delta x_i^C &   i \in \mathcal{A}, \\  \Delta x_i^{ls} & i \in \mathcal{I},
\end{cases} \quad \Delta \lambda_i = \begin{cases}
\Delta \lambda_i^{ls} \textrm{ or } \Delta \lambda_i^b &  i \in \mathcal{A},\\ 
\Delta \lambda_i^{ls} \textrm{ or } \Delta \lambda_i^{C}  & i \in \mathcal{I}, 
\end{cases}
\end{equation*}
where $\Delta x_i^S$ is given by (\ref{eq:prop:schurBased:dx}), $\Delta x_i^C$ by (\ref{eq:prop:compBased:dx}),  $\Delta x_i^{ls}$ by (\ref{eq:Redx2syst_schur}), $\Delta \lambda_i^{ls}$ by  (\ref{eq:fullApprox:dlambdaAls}),  $\Delta \lambda_i^b$ by (\ref{eq:fullApprox:dlambdaAsimple1}),  $\Delta \lambda_i^{ls}$ by (\ref{eq:fullApprox:dlambdaI}) and $\Delta \lambda_i^C$ by  (\ref{eq:prop:compBased:dlambda}).
Assume that $0 < \mu \le \muhat$ and $(x,\lambda)$ is sufficiently close to $(x^{\mu}, \lambda^{\mu})\in \mathcal{B}\left((x^*, \lambda^*), \delta\right)$ such that $\| F_{\mu} (x,\lambda) \| = \mathcal{O}(\mu)$. Then there exists $\mubar$, with $0 <\mubar \le \muhat$, such that for $0 < \mu \leq \mubar$ it holds that
\begin{equation*}
\left\| (\Delta x, \Delta \lambda) - (\Delta x^N , \Delta \lambda^N) \right\|   = \mathcal{O}(\mu^2).
\end{equation*}
\end{theorem}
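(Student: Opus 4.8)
The plan is to decompose the total error by blocks,
\[
\| (\Delta x,\Delta\lambda)-(\Delta x^N,\Delta\lambda^N)\|^2 = \|\Delta x_\mathcal{A}-\Delta x_\mathcal{A}^N\|^2 + \|\Delta x_\mathcal{I}^{ls}-\Delta x_\mathcal{I}^N\|^2 + \|\Delta\lambda_\mathcal{A}-\Delta\lambda_\mathcal{A}^N\|^2 + \|\Delta\lambda_\mathcal{I}-\Delta\lambda_\mathcal{I}^N\|^2,
\]
and to show each of the four terms is $\mathcal{O}(\mu^2)$, so that taking square roots gives the claim. The first term is immediate: by Proposition~\ref{prop:schurBased} and Proposition~\ref{prop:compBased}, for either admissible choice $\Delta x_i=\Delta x_i^S$ or $\Delta x_i^C$, $i\in\mathcal{A}$, one has $\|\Delta x_\mathcal{A}-\Delta x_\mathcal{A}^N\|=\mathcal{O}(\mu^2)$. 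Denote this quantity $\varepsilon_\mathcal{A}$.

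First I would propagate $\varepsilon_\mathcal{A}$ to $\Delta x_\mathcal{I}^{ls}$. Subtracting the exact second block of (\ref{eq:SchurC_partioned}) from (\ref{eq:Redx2syst_schur}) gives
\[
\left(H_{\mathcal{I}\mathcal{I}}+X_{\mathcal{I}\mathcal{I}}\inv\Lambdait_{\mathcal{I}\mathcal{I}}\right)\left(\Delta x_\mathcal{I}^{ls}-\Delta x_\mathcal{I}^N\right) = -H_{\mathcal{I}\mathcal{A}}\,\varepsilon_\mathcal{A}.
\]
The matrix here is, as noted below (\ref{eq:Redx2syst_schur}), symmetric positive definite under Assumption~\ref{ass1}, and since $X_{\mathcal{I}\mathcal{I}}\inv\Lambdait_{\mathcal{I}\mathcal{I}}=\mathcal{O}(\mu)$ by Lemma~\ref{lemma:Order} it converges to the positive definite reduced Hessian as $\mu\to0$; hence its inverse is bounded by a constant uniformly in $\mu$ for $\mu$ small. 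With $H$ bounded on the neighbourhood, this yields $\|\Delta x_\mathcal{I}^{ls}-\Delta x_\mathcal{I}^N\|=\mathcal{O}(\|\varepsilon_\mathcal{A}\|)=\mathcal{O}(\mu^2)$, controlling the second term.

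Next I would handle the multiplier blocks by tracking orders with Lemma~\ref{lemma:Order}. For $\Delta\lambda_\mathcal{I}$: the choice $\Delta\lambda_\mathcal{I}^C$ gives $\mathcal{O}(\mu^2)$ directly by Proposition~\ref{prop:compBased}, while for $\Delta\lambda_\mathcal{I}^{ls}$ from (\ref{eq:fullApprox:dlambdaI}), subtracting the exact fourth-block relation yields $\Delta\lambda_\mathcal{I}^{ls}-\Delta\lambda_\mathcal{I}^N=-X_{\mathcal{I}\mathcal{I}}\inv\Lambdait_{\mathcal{I}\mathcal{I}}(\Delta x_\mathcal{I}^{ls}-\Delta x_\mathcal{I}^N)=\mathcal{O}(\mu)\cdot\mathcal{O}(\mu^2)=\mathcal{O}(\mu^3)$. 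For $\Delta\lambda_\mathcal{A}$, I would use the consistency property stated before the theorem: both (\ref{eq:fullApprox:dlambdaAsimple1}) and (\ref{eq:fullApprox:dlambdaAls}) reproduce $\Delta\lambda_\mathcal{A}^N$ exactly when $\Delta x_\mathcal{A}=\Delta x_\mathcal{A}^N$, so the error is an affine function of the perturbations $\varepsilon_\mathcal{A}$ and $\Delta x_\mathcal{I}^{ls}-\Delta x_\mathcal{I}^N$. For $\Delta\lambda_\mathcal{A}^b$ the coefficients are $H_{\mathcal{A}\mathcal{A}}$ and $H_{\mathcal{A}\mathcal{I}}$, both bounded, giving $\mathcal{O}(\mu^2)$. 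For $\Delta\lambda_\mathcal{A}^{ls}$ the solution operator $(I_{\mathcal{A}\mathcal{A}}+X_{\mathcal{A}\mathcal{A}}^2)\inv=\Theta(1)$ is bounded; the perturbation through the bracket term $H_{\mathcal{A}\mathcal{A}}\Delta x_\mathcal{A}+H_{\mathcal{A}\mathcal{I}}\Delta x_\mathcal{I}^{ls}$ is $\mathcal{O}(\mu^2)$, while the perturbation through $X_{\mathcal{A}\mathcal{A}}\Lambdait_{\mathcal{A}\mathcal{A}}\varepsilon_\mathcal{A}$ carries the extra factor $X_{\mathcal{A}\mathcal{A}}=\mathcal{O}(\mu)$ and is only $\mathcal{O}(\mu^3)$, so again $\mathcal{O}(\mu^2)$. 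This step also shows why $\Delta\lambda_\mathcal{A}^-$ is excluded: its term $X_{\mathcal{A}\mathcal{A}}\inv\Lambdait_{\mathcal{A}\mathcal{A}}\varepsilon_\mathcal{A}$ amplifies $\varepsilon_\mathcal{A}$ by $X_{\mathcal{A}\mathcal{A}}\inv=\Theta(1/\mu)$, degrading the bound to $\mathcal{O}(\mu)$.

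Combining the four $\mathcal{O}(\mu^2)$ block bounds in the root-sum-of-squares gives the result, with $\mubar$ the minimum of the thresholds supplied by Lemma~\ref{lemma:Order}, Lemma~\ref{lemma:dzBound}, Proposition~\ref{prop:schurBased} and Proposition~\ref{prop:compBased}. I expect the main obstacle to be the propagation step: one must establish that the inverse of the reduced Schur matrix $H_{\mathcal{I}\mathcal{I}}+X_{\mathcal{I}\mathcal{I}}\inv\Lambdait_{\mathcal{I}\mathcal{I}}$ is bounded uniformly in $\mu$, so that the $\mathcal{O}(\mu^2)$ error in $\Delta x_\mathcal{A}$ is not amplified when solving (\ref{eq:Redx2syst_schur}). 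The remaining work is careful bookkeeping of orders via Lemma~\ref{lemma:Order} and Lemma~\ref{lemma:dzBound}, the delicate point being to check that no block multiplies an $\mathcal{O}(\mu^2)$ error by a factor that blows up like $X_{\mathcal{A}\mathcal{A}}\inv$.
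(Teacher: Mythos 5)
Your proposal is correct and follows essentially the same route as the paper's proof: the error in $\Delta x_\mathcal{A}$ from Propositions~\ref{prop:schurBased} and~\ref{prop:compBased} is propagated through (\ref{eq:Redx2syst_schur}) using the uniform invertibility of $H_{\mathcal{I}\mathcal{I}}+X_{\mathcal{I}\mathcal{I}}\inv\Lambdait_{\mathcal{I}\mathcal{I}}$, and then into the multiplier blocks via the same subtraction identities, with identical order bookkeeping (including the $\mathcal{O}(\mu^3)$ bound for $\Delta\lambda_\mathcal{I}^{ls}$ and the bound $\|(I_{\mathcal{A}\mathcal{A}}+X_{\mathcal{A}\mathcal{A}}^2)\inv\|\le 1$). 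The only cosmetic difference is your justification of the uniform lower bound on the smallest singular value of the reduced Schur matrix (convergence to the reduced Hessian) versus the paper's direct appeal to positive definiteness under Assumption~\ref{ass1}; these amount to the same argument.
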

\begin{proof}
Similarly as in the proof of Proposition~\ref{prop:Astep}. By Proposition~\ref{prop:schurBased} and Proposition~\ref{prop:compBased} there exists $\mubar_5$ and $\mubar_6$ respectively, with $0 < \mubar_i \le \muhat$, $i=5,6$ such that for  $\Delta x_i$ equal to $\Delta x_i^S \mbox{ or } \Delta x_i^C$ it holds that $| \Delta x_i - \Delta x_i^N | =\mathcal{O}(\mu^2)$, $i \in \mathcal{A}$, for $0 < \mu \leq \min\{ \mubar_5, \mubar_6 \}$. In consequence it follows that $\| \Delta x_\mathcal{A} - \Delta x_\mathcal{A}^N \| = \mathcal{O}(\mu^2)$, $0 < \mu \leq \min\{ \mubar_5, \mubar_6 \}$. By Proposition~\ref{prop:compBased} it also holds that $| \Delta \lambda_i^C - \Delta \lambda_i^N | = \mathcal{O}(\mu^2)$, $i \in \mathcal{I}$, $0 < \mu \leq \mubar_6$. The backward error with $\Delta x^{ls}_\mathcal{I}$ as given in (\ref{eq:Redx2syst_schur}) is
\[\Delta x^{ls}_\mathcal{I} - \Delta x_\mathcal{I}^N = - \left(H_{\mathcal{I} \mathcal{I}} + X_{\mathcal{I} \mathcal{I}} \inv \Lambdait_{\mathcal{I} \mathcal{I}}  \right) \inv H_{\mathcal{I} \mathcal{A}} \left( \Delta x_{\mathcal{A}} - \Delta  x_{\mathcal{A}}^N \right), \]
which gives
\begin{align*}
\left\| \Delta x^{ls}_\mathcal{I} - \Delta x_\mathcal{I}^N \right\| &  \leq \| \left(H_{\mathcal{I} \mathcal{I}} + X_{\mathcal{I} \mathcal{I}} \inv \Lambdait_{\mathcal{I} \mathcal{I}}  \right)\inv \|  \|  H_{\mathcal{I} \mathcal{A}}  \| \|  \Delta x_{\mathcal{A}} - \Delta  x_{\mathcal{A}}^N \| \\ & 
\leq \frac{1}{\sigma_{min}\left(H_{\mathcal{I} \mathcal{I}} + X_{\mathcal{I} \mathcal{I}} \inv \Lambdait_{\mathcal{I} \mathcal{I}}  \right) }  \|  H_{\mathcal{I} \mathcal{A}}  \| \|  \Delta x_{\mathcal{A}} - \Delta  x_{\mathcal{A}}^N \| .
\end{align*}
Due to the assumption on $f$ the elements of $ H_{\mathcal{I} \mathcal{A}}$ are bounded. Moreover, the smallest singular value of $H_{\mathcal{I} \mathcal{I}} + X_{\mathcal{I} \mathcal{I}} \inv \Lambdait_{\mathcal{I} \mathcal{I}}$ is bounded away from zero since the matrix is positive definite by Assumption~\ref{ass1}. Hence it follows that $\left\| \Delta x^{ls}_\mathcal{I} - \Delta x_\mathcal{I}^N \right\|  = \mathcal{O}(\mu^2)$, $0 < \mu \leq \min\{ \mubar_5, \mubar_6 \}$. Note that $\Delta \lambda_\mathcal{I}^N$ is the solution of (\ref{eq:fullApprox:dlambdaI}) with $\Delta x_\mathcal{I}^N$. Subtraction of (\ref{eq:fullApprox:dlambdaI}), with $\Delta x_\mathcal{I}^N$, from (\ref{eq:fullApprox:dlambdaI}) with the approximated solution $\Delta x_\mathcal{I}^{ls}$ gives $
\Delta \lambda^{ls}_\mathcal{I} - \Delta \lambda^{N}_\mathcal{I} = -X_{\mathcal{I} \mathcal{I}} \inv \Lambdait_{\mathcal{I} \mathcal{I}} \left( \Delta x_{\mathcal{I}}^{ls} -  \Delta x^{N}_\mathcal{I} \right)$, and hence
\begin{equation*}
\| \Delta \lambda^{ls}_\mathcal{I} - \Delta \lambda^{N}_\mathcal{I} \| \leq \| X_{\mathcal{I} \mathcal{I}} \inv  \Lambdait_{\mathcal{I} \mathcal{I}} \|  \| \Delta x_{\mathcal{I}}^{ls} -  \Delta x^{N}_\mathcal{I} \|.
\end{equation*}
By Lemma~\ref{lemma:Order} it holds that $\| X_{\mathcal{I} \mathcal{I}} \inv  \Lambdait_{\mathcal{I} \mathcal{I}} \| =  \mathcal{O}(\mu)$, $0 < \mu \le \max\{ \mubar_5, \mubar_6 \}$. With $\left\| \Delta x^{ls}_\mathcal{I} - \Delta x_\mathcal{I}^N \right\|  = \mathcal{O}(\mu^2)$, $0 < \mu \leq \min\{ \mubar_5, \mubar_6 \}$ it then follows that $\| \Delta \lambda^{ls}_{\mathcal{I}} -  \Delta \lambda_{\mathcal{I}}^N \|  = \mathcal{O}(\mu^3)$, and also $| \Delta \lambda^{ls}_i - \Delta \lambda_i^N| = \mathcal{O}(\mu^3)$, $i \in \mathcal{I}$, $0 < \mu \leq \min\{ \mubar_5, \mubar_6 \}$. Similarly, $\Delta \lambda^N_\mathcal{A}$ is the solution to (\ref{eq:fullApprox:dlambdaAls}) with $\Delta x^N_\mathcal{A}$ and $\Delta x^N_\mathcal{I}$. Subtraction of (\ref{eq:fullApprox:dlambdaAls}), with $\Delta x^N_\mathcal{A}$ and $\Delta x^N_\mathcal{I}$, from (\ref{eq:fullApprox:dlambdaAls}) with the approximated solutions gives   
\begin{align*}
\left( I_{\mathcal{A} \mathcal{A}} +
X_{\mathcal{A} \mathcal{A}}^2 \right) \left( \Delta \lambda^{ls}_{\mathcal{A}} -  \Delta \lambda_{\mathcal{A}}^N \right) 
& =  \left(  H_{\mathcal{A} \mathcal{A}} - X_{\mathcal{A} \mathcal{A}} \Lambdait_{\mathcal{A} \mathcal{A}}  \right)  \left(\Delta x_{\mathcal{A}} - \Delta x_{\mathcal{A}}^N \right) \\
& \quad +  H_{\mathcal{A} \mathcal{I}} \left( \Delta x^{ls}_{\mathcal{I}}-  \Delta x_{\mathcal{I}}^N \right).
\end{align*}
The the largest singular value of $\left( I_{\mathcal{A} \mathcal{A}} +
X_{\mathcal{A} \mathcal{A}}^2 \right)\inv$ is bounded by $1$ and hence
\begin{equation*}
\|\Delta \lambda^{ls}_{\mathcal{A}} -  \Delta \lambda_{\mathcal{A}}^N \| \leq \left( \| H_{\mathcal{A} \mathcal{A}}  \| + \| X_{\mathcal{A} \mathcal{A}} \Lambdait_{\mathcal{A} \mathcal{A}} \| \right) \| \Delta x_{\mathcal{A}} - \Delta x_{\mathcal{A}}^N \| + \|  H_{\mathcal{A} \mathcal{I}} \| \|\Delta x^{ls}_{\mathcal{I}}-  \Delta x_{\mathcal{I}}^N \|.
\end{equation*}
The elements of $ H_{\mathcal{A} \mathcal{A}}$ and $H_{\mathcal{A} \mathcal{I}}$ are bounded and by Lemma~\ref{lemma:Order} it holds that $\| X_{\mathcal{A} \mathcal{A}} \Lambdait_{\mathcal{A} \mathcal{A}} \| =  \mathcal{O}(\mu)$, $0 < \mu \le \max\{ \mubar_5, \mubar_6 \}$. Thus it follows that $\| \Delta \lambda^{ls}_{\mathcal{A}} -  \Delta \lambda_{\mathcal{A}}^N \|  = \mathcal{O}(\mu^2)$, and also $| \Delta \lambda^{ls}_i - \Delta \lambda_i^N| = \mathcal{O}(\mu^2)$, $i \in \mathcal{A}$, $0 < \mu \leq \min\{ \mubar_5, \mubar_6 \}$. Similarly, (\ref{eq:fullApprox:dlambdaAsimple1}) gives the backward error
\[
 \Delta \lambda^b_{\mathcal{A}} -  \Delta \lambda_{\mathcal{A}}^N  = H_{\mathcal{A}   \mathcal{A}}  \left( \Delta x_{\mathcal{A}} - \Delta x_{\mathcal{A}}^N \right)+ H_{\mathcal{A} \mathcal{I}} \left( \Delta x^{ls}_{\mathcal{I}} -  \Delta x_{\mathcal{I}}^N \right). 
\]
Hence
\[
\|  \Delta \lambda^b_{\mathcal{A}} -  \Delta \lambda_{\mathcal{A}}^N \|  \leq \| H_{\mathcal{A}   \mathcal{A}} \| \|  \Delta x_{\mathcal{A}} - \Delta x_{\mathcal{A}}^N \|+  \|  H_{\mathcal{A} \mathcal{I}} \| \|  \Delta x^{ls}_{\mathcal{I}} -  \Delta x_{\mathcal{I}}^N \|, 
\]
from which it follows that $\|  \Delta \lambda^b_{\mathcal{A}} -  \Delta \lambda_{\mathcal{A}}^N \| = \mathcal{O}(\mu^2)$, and also $|  \Delta \lambda^b_i -  \Delta \lambda_i^N | = \mathcal{O}(\mu^2)$, $i \in \mathcal{A}$, $0 < \mu \leq \min\{ \mubar_5, \mubar_6 \}$. Thus the result holds for $\mubar = \min\{ \mubar_5, \mubar_6 \}$. 
\end{proof}
Information is discarded in the calculation of the components $\Delta x^S_i$, $\Delta x_i^C$, $i\in \mathcal{A}$, and $\Delta \lambda_i^C$, $i \in\mathcal{I}$, with (\ref{eq:prop:schurBased:dx}) and (\ref{eq:prop:compBased:dxdlambda}) respectively. The equations for the approximate solution in Theorem~\ref{thm:simplifiedCase} show that it is essential to obtain a good approximate solution of $\Delta x_\mathcal{A}^N$. It is the error in the approximate solution of $\Delta x_\mathcal{A}^N$ that propagates through the suggested solutions labeled with $ls$ and $b$. In contrast to all other components of the proposed full approximate solution, $\Delta \lambda_i^{ls}$, $i\in \mathcal{I}$, actually have asymptotic component error bounds in the order of magnitude $\mu^3$, as can be seen in the proof of  Theorem~\ref{thm:simplifiedCase}.

In general the active and inactive sets at the optimal solution are
unknown and have to be estimated as the iterations proceed. The
quality of the approximate solution of $\Delta x_\mathcal{A}^N$ will
hence also depend on these estimates. There is a trade-off when
estimating the set of active constraints. A restrictive strategy may
lead to a more accurate approximate $\Delta x_\mathcal{A}$.
However, it increases the cardinality of the inactive set and in
consequence the size of the system (\ref{eq:Redx2syst_schur}) that
needs to be solved at each iteration. In theory, the cardinality of the
inactive set is determined by the number of inactive constraints at the solution of the specific problem, whereas in practice it is determined by the estimate. The size of the system that needs to be solved at each iteration may thus range from $0$ to $n$. A restrictive strategy may also increase the size of some coefficients in the diagonal of the matrix of (\ref{eq:Redx2syst_schur}), or (\ref{eq:genCase:Redx2syst_schur}) in the general case, which may increase the condition number. A generous strategy on the other hand, decreases the size of the system that has to be solved but may increase the error in the approximate $\Delta x_\mathcal{A}$, which then propagates to other components of the approximate solution.
In the ideal case with the true inactive set, then (\ref{eq:Redx2syst})
and (\ref{eq:Redx2syst_schur}) are composed of the inactive parts of
(\ref{eq:PDsyst}), or equivalently (\ref{eq:PDsyst_partitioned}), and
(\ref{eq:SchurC_partioned}) respectively. Consequently, the inactive part of the Schur complement in (\ref{eq:Redx2syst_schur}) does not become increasingly ill-conditioned due to $\mu$ approaching zero, in contrast to the complete Schur complement in (\ref{eq:SchurC_partioned}). 
However, in practice the behavior will be dependent on an estimate of the inactive set. 

Note also that the system that needs to be solved for the full approximate solution has the same structure as the original one. In consequence, our analysis may be interpreted
in the framework of previous work on stability and effects of
finite-precision arithmetic for interior-point methods,
e.g., \cite{Wri2001,Wri1995s,ForGilShi1996,Wri1999}. In the case of quadratic problems, see also \cite{MorSim2017}.

To increase the comprehensibility of the work we have described the theoretical foundation for problems on the form (\ref{eq:P}). Analogous results for problems on the more general form (\ref{eq:NLP}) together with complementary remarks are given in Appendix~\ref{app:genCase}. 
\section{Numerical results} \label{sec:numRes}

As an initial numerical study we consider convex quadratic optimization problems with lower and upper bounds. In particular, randomly generated problems and a selection from the corresponding class in the CUTEst test collection \cite{GouOrbToi15}. The minimizers of the randomly generated problems satisfy strict complementarity, whereas the minimizers of the CUTEst problems typically do not. 
The simulations were done in \texttt{Julia} and all systems of linear equations were solved by its built-in solver. Moreover, the benchmark problems were initially processed using the packages \texttt{CUTEst.jl} and \texttt{NLPmodels.jl} by Orban and Siqueira \cite{OrbSigSmoothOpt2019}. 

The purpose of the first part of this section is to compare the proposed approximate solutions in Theorem~\ref{thm:genCase}. The intent is also to give a rough indication of how the approximation errors develop for practical values of $\mu$. A setting is considered where the vector $(x,\lambda)$, that satisfies $ \| F_{\mu}(x,\lambda) \| < \mu$, is found by an interior-point method. Thereafter, $\mu$ is decreased by a factor $\sigma = 0.1$ to $\mu^+=\sigma \mu$ and the approximate solution of (\ref{eq:PDsyst}) is calculated. This procedure was then repeated for different values of $\mu$. 
Mean errors with one standard deviation error bars for the proposed approximate solutions are shown in Figure~\ref{fig:randProbs}. 
As mentioned, the results are for the approximate solutions given in Theorem~\ref{thm:genCase} of Appendix~\ref{app:genCase} since the problems in general include lower and upper bounds. In order to avoid double subscripts in the approximates, we have throughout this section omitted the second subscript. Furthermore, $\Delta x^{S}_\mathcal{A}$ was used in the equations which require an initial approximation of $\Delta x_\mathcal{A}^N$. 
Figure~\ref{fig:randProbs} also shows the mean improvement in terms of the measure $\| F_{\mu^+} \|$ for two new iterates $(x_+^S, \lambda^S_+)$ and $(x_+^C, \lambda^C_+)$ defined by
\begin{equation*}
(x_+^{ S,C }, \lambda_+^{ S,C }) =  ( x+\alpha^P \Delta x, \lambda+\alpha^D \Delta \lambda ), \> \> \> (\Delta x, \Delta \lambda)  = \left( \begin{pmatrix} \Delta x_\mathcal{A}^{ S,C } \\ \Delta x_\mathcal{I}^{ls} \end{pmatrix}, \begin{pmatrix} \Delta \lambda_\mathcal{A}^{ls} \\ \Delta \lambda_\mathcal{I}^{ls} \end{pmatrix}  \right),
\end{equation*}
with step lengths $\alpha^P$ and $\alpha^D$ as in Algorithm~\ref{alg:IPMnewt}. Specifically, the search direction is composed of (\ref{eq:prop:genCase:schurBased:dx}) or (\ref{eq:prop:genCase:compBased:dx}) combined with (\ref{eq:genCase:Redx2syst_schur}), (\ref{eq:genCase:fullApprox:dlambdaAls}) and (\ref{eq:genCase:fullApprox:dlambdaI}).
The figure also contains the mean improvement of the Newton iterate $(x^{N}_+, \lambda^{N}_+)$, which is defined analogously. 
The results are for $10^2$ randomly generated
problems, with $10^3$ variables, whose minimizers satisfy (\ref{eq:genCase:OptCond}). For each problem, both the specific bounds as well as the specific active and inactive constraints were chosen by random. Moreover, the elements of the Hessian were uniformly distributed around zero with a sparsity level corresponding to approximately 40 percent non-zero elements. The condition numbers were in the order of magnitude $10^7$-$10^{10}$ and the largest singular values in the order of magnitude of $10^3$.
\begin{figure}[H]
\centering
\parbox{6.0cm}{
\includegraphics[width=6.0cm]{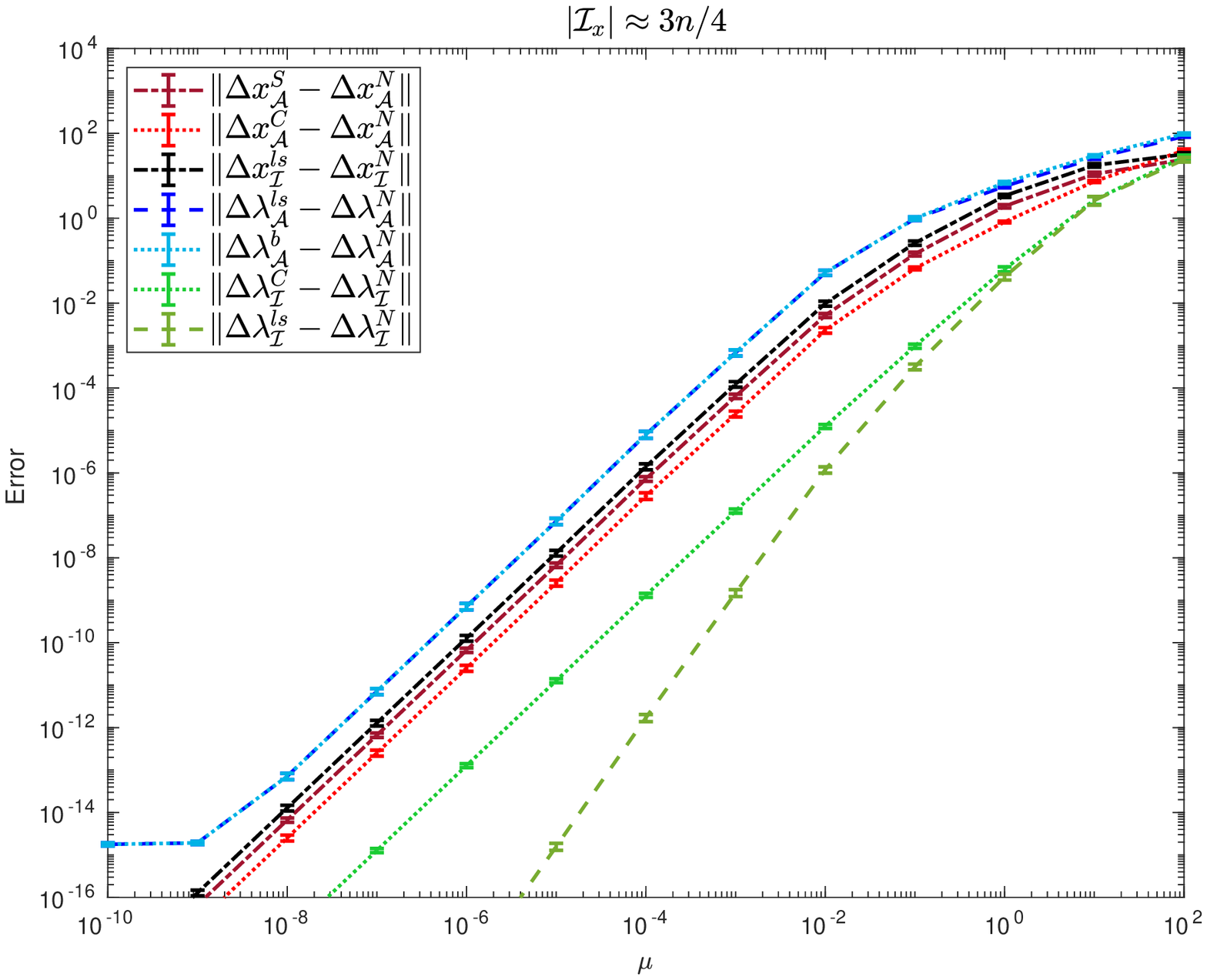}}
\hspace{-5mm}
\begin{minipage}{6.0cm}
\includegraphics[width=6.0cm]{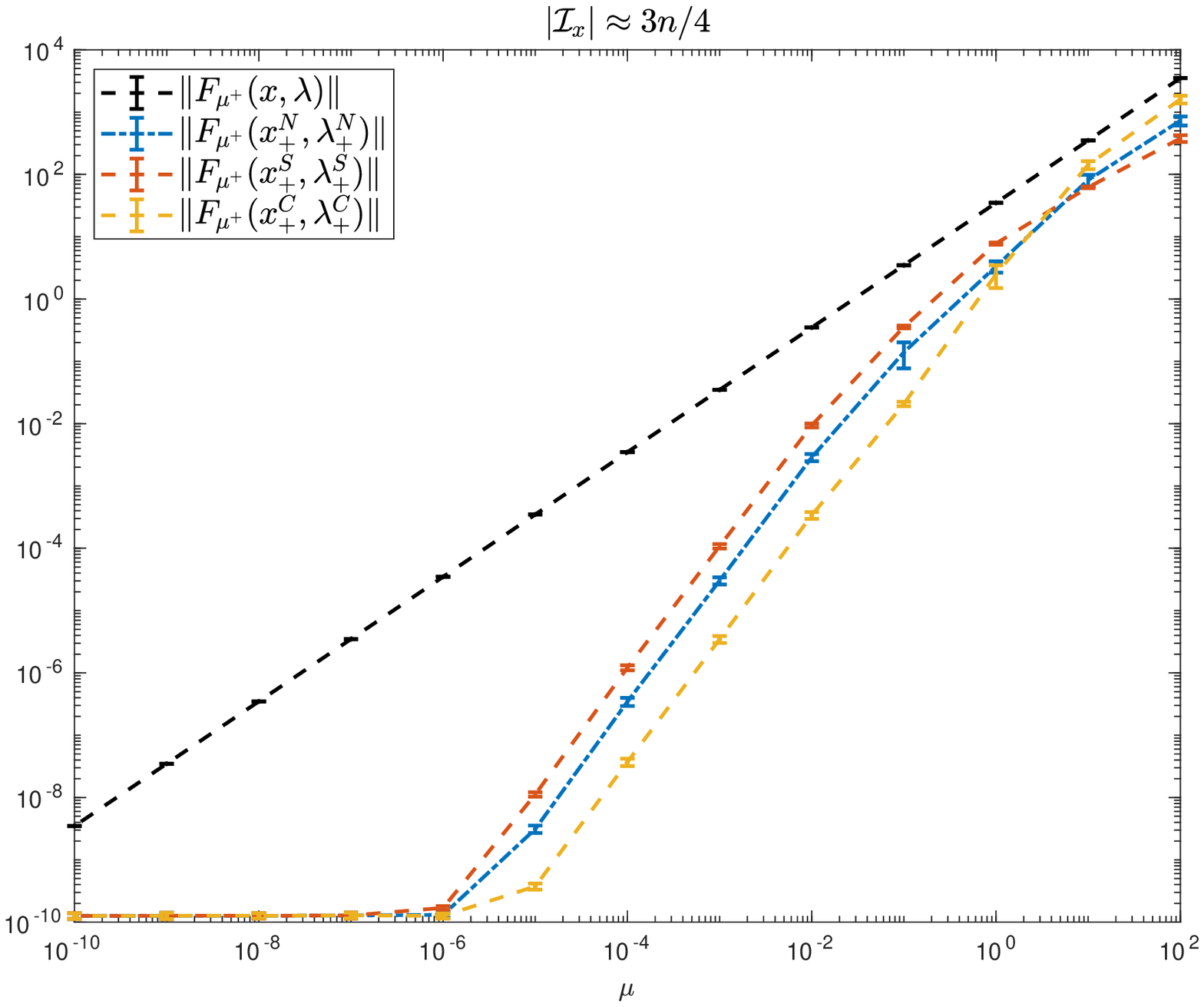} 
\end{minipage}
\parbox{6.0cm}{
\includegraphics[width=6.0cm]{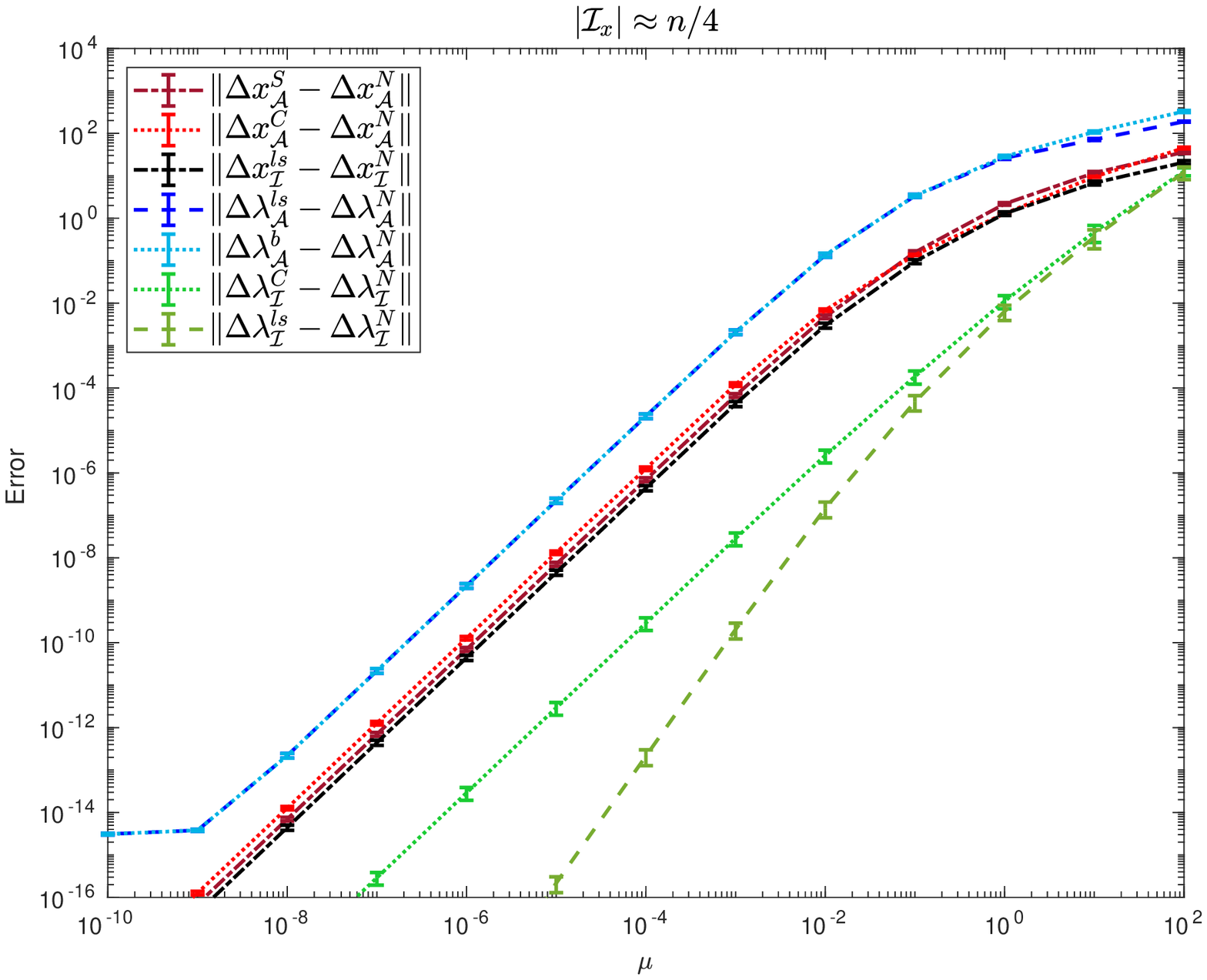}}
\hspace{-5mm}
\begin{minipage}{6.0cm}
\includegraphics[width=6.0cm]{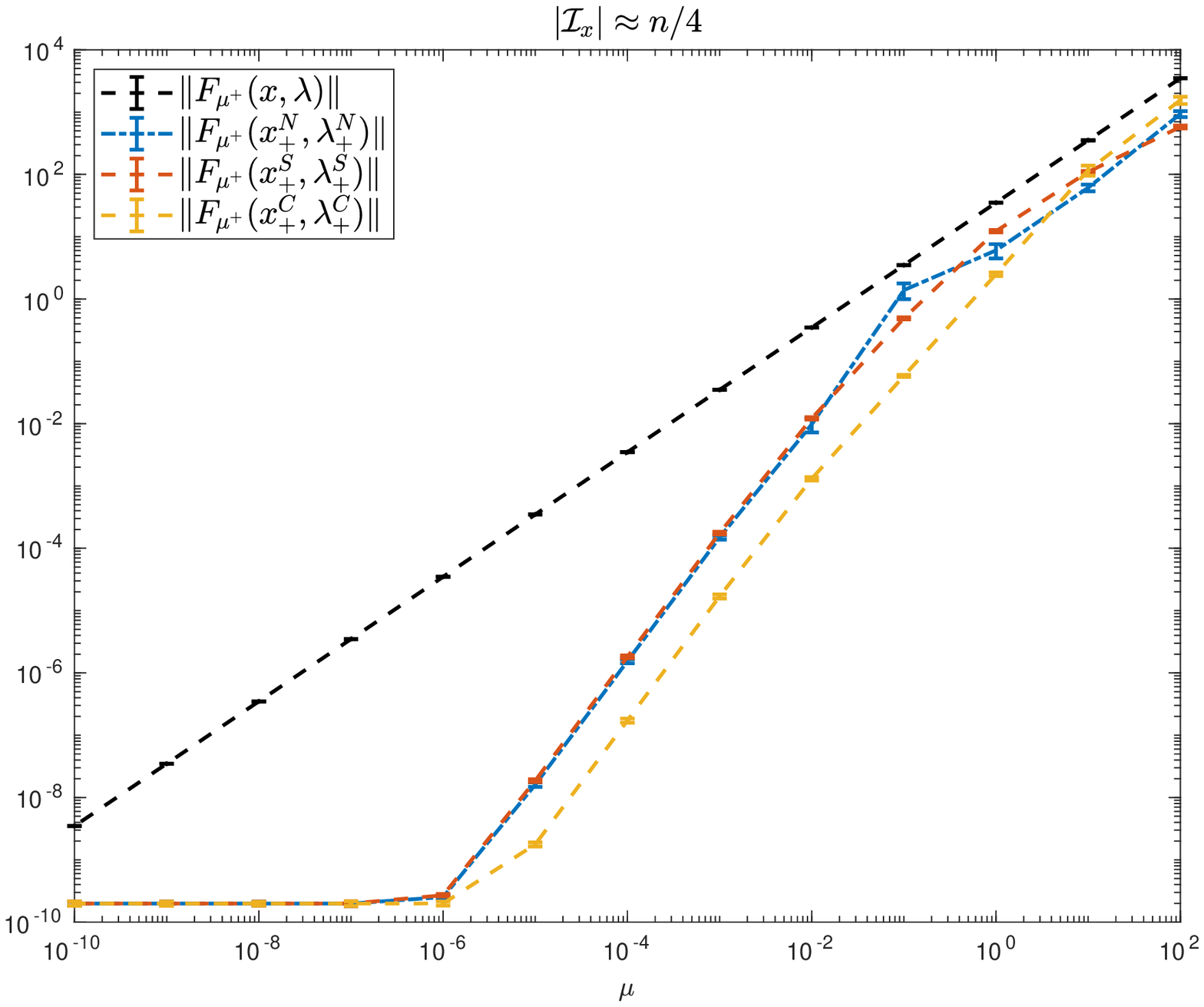} 
\end{minipage}
\caption{Mean approximation error and mean progress with measure $\| F_{\mu^+}\|$ with one standard deviation error bars for randomly generated quadratic problems. The top and the bottom correspond to problems where approximately $3/4$ and $1/4$ of the variables respectively are inactive at the solution.}
\label{fig:randProbs}
\end{figure}
The least accurate approximate solutions in
Figure~\ref{fig:randProbs} are those corresponding to active $\lambda$
and inactive $x$. This is anticipated as their error bounds rely more
heavily on the size of the elements of $H$. Moreover, it can be seen
that $\Delta \lambda_\mathcal{I}^{ls}$ is favorable over $\Delta
\lambda_\mathcal{I}^C$ for the problems considered. This is anticipated as $\Delta \lambda_\mathcal{I}^{ls}$ has asymptotic error bounds in the order of magnitude $\mu^3$, in contrast to the bounds corresponding to $\Delta \lambda_\mathcal{I}^C$ which is in the order of magnitude $\mu^2$, as mentioned in Section~\ref{subsec:fullApp}. In general,
Figure~\ref{fig:randProbs} gives an indication of what equation that
is favorable for each partial approximate solution if one is to be
chosen. However, as mentioned, more sophisticated choices can be made
by carefully considering the known quantities in the individual error
terms for specific components. The right side of
Figure~\ref{fig:randProbs} shows that the iterates $(x_+^S, \lambda^S_+)$ and $(x_+^C, \lambda^C_+)$ perform similar to
$(x_+^N, \lambda^N_+)$ in terms of the measure $\|F_{\mu^+} \|$ for a wide range of $\mu$. The error bars show that the results are not sensitive to changes in specific bounds, which of the constraints are active/inactive or different initial solutions. 
Numerical simulations have shown, as the theory also predicts, that the results can be improved (or dis-improved) by increasing (or decreasing) the size of the coefficients of the matrix $H$ as well as its sparsity level.

Next we show results for a selection of problems in the CUTEst test
collection in the analogous setting. In the problems with variable options, the number of primal variables, $n_x$, was typically chosen to approximately $5000$, resulting in a total number of primal-dual variables in the order of $10^4$. The number of primal variables of each specific problem is shown in Table~\ref{table:CUTEstData1}. Each problem was initially solved by an
interior-point method with stopping criterion $\| F_{0} (x,\lambda )\|
< 10^{-14}$, i.e., the first-order optimality conditions given by
(\ref{eq:genCase:Fmu}) for $\mu= 0$. This was to determine the selection of problems as well as estimates of the active and inactive sets. Problems with an unconstrained optimal solution or an optimal solution with only degenerate active constraints were not considered. In the first case the proposed approximate solutions are equivalent to the true solution. In the second case it is not clear how to deduce active/inactive sets. A constraint was considered as active if the corresponding variable was closer than $10^{-10}$ to its bound. An active constraint was deemed degenerate if the corresponding multiplier value was below $10^{-6}$. An exception was made for problem \texttt{ODNAMUR}, due to its larger size, for which the tolerances above were increased by a factor of $10^1$ and $10^2$. 
Figure~\ref{fig:CUTEstDiffMu} shows mean errors with the approximate solutions of Theorem~\ref{thm:genCase} on each CUTEst problem. The results are for three different values of $\mu$ with 10 different random initial solutions. The figure also shows the measure $\| F_{\mu^+}  \| $ for $(x,\lambda)$, $(x_{+}^S, \lambda_{+}^S)$, $(x_{+}^C, \lambda_{+}^C)$ and $ (x^{N}_+, \lambda^{N}_+)$. Simulations with the set estimation heuristic above have shown that the behavior of the approximate solution varies in three different regions depending on $\mu$. These regions are approximately, $[10^2,10^{-2})$, $[10^{-2}, 10^{-6}]$ and $(10^{-6},0)$. The $\mu$-values in Figure~\ref{fig:CUTEstDiffMu} correspond to representative behavior in their respective region. The problems are ordered such that the fraction of estimated active constraints at the solution decreases from  left to right.
\begin{figure}[H]
\centering
\parbox{6.1cm}{
\includegraphics[width=6.3cm]{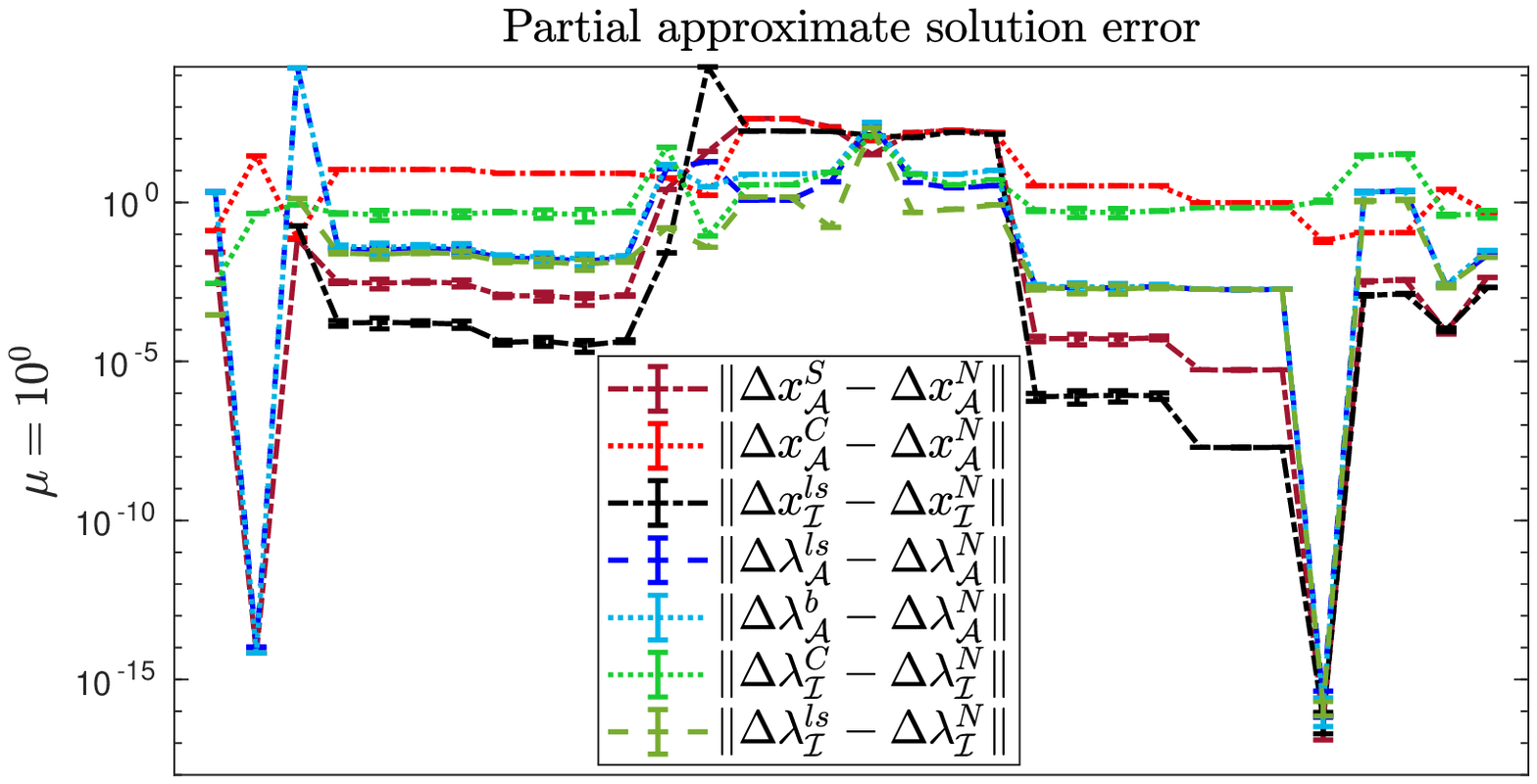}}
\hspace{-5mm}
\begin{minipage}{6.1cm}
\includegraphics[width=6.3cm]{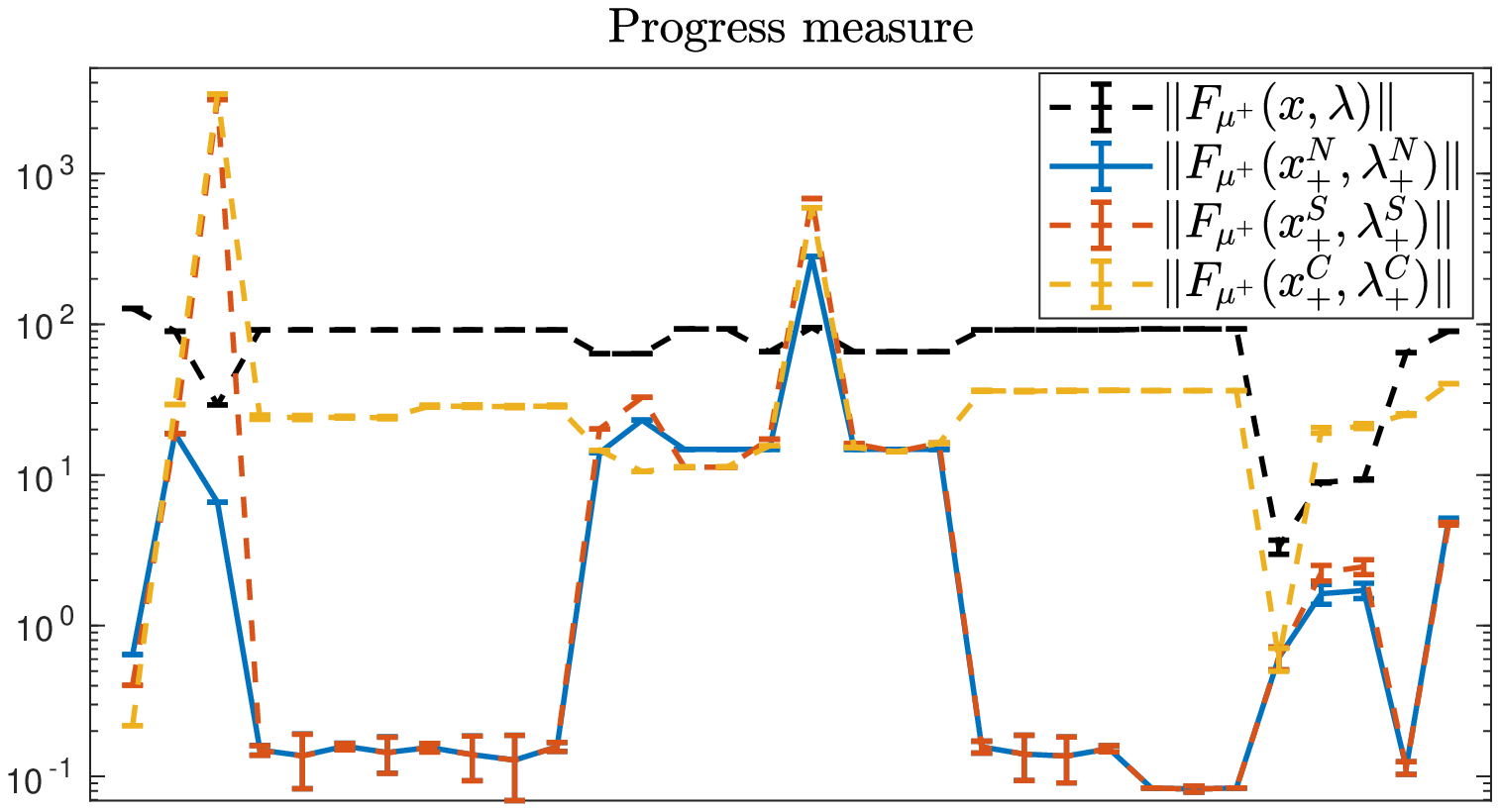} 
\end{minipage}
\parbox{6.1cm}{
\includegraphics[width=6.3cm]{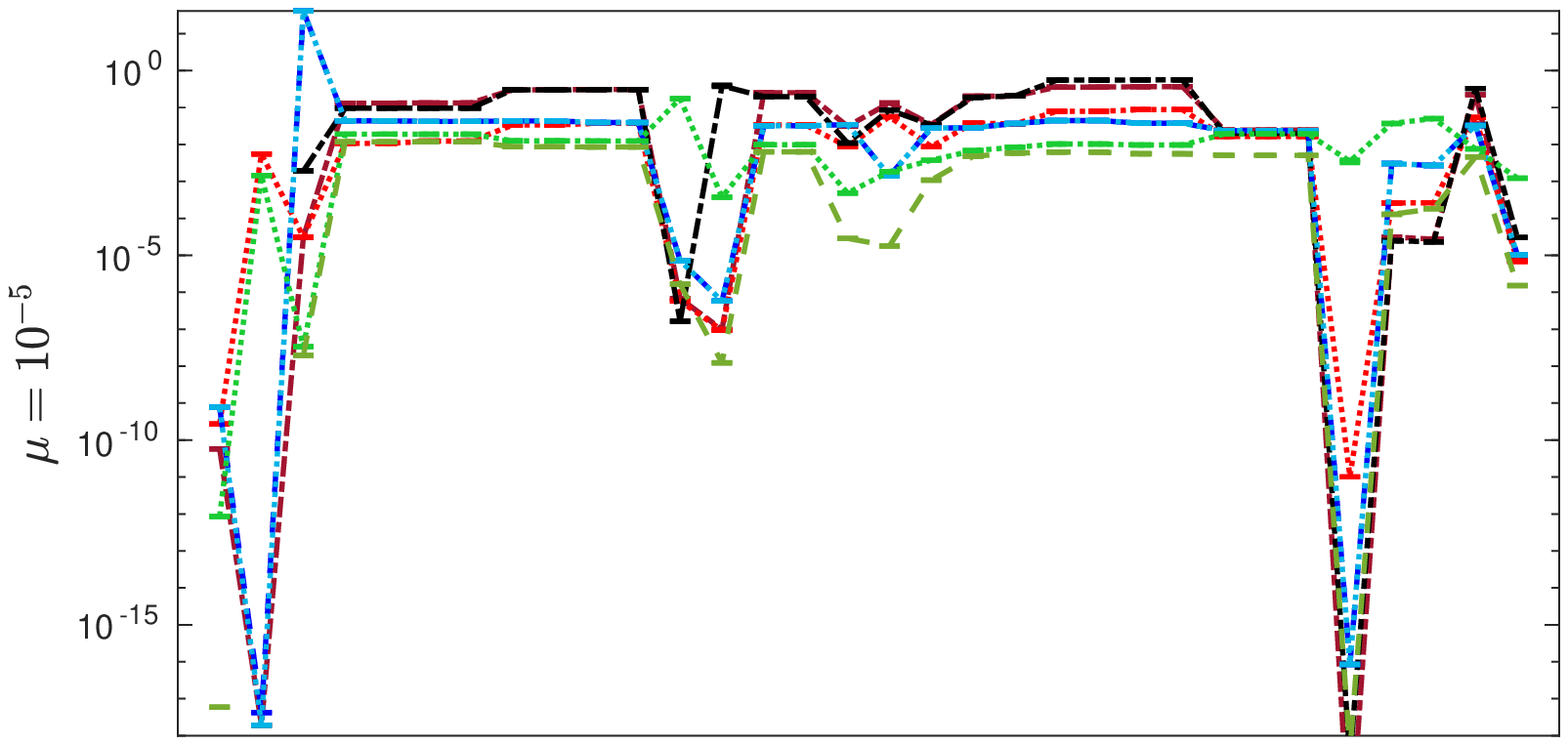}}
\hspace{-5mm}
\begin{minipage}{6.1cm}
\includegraphics[width=6.3cm]{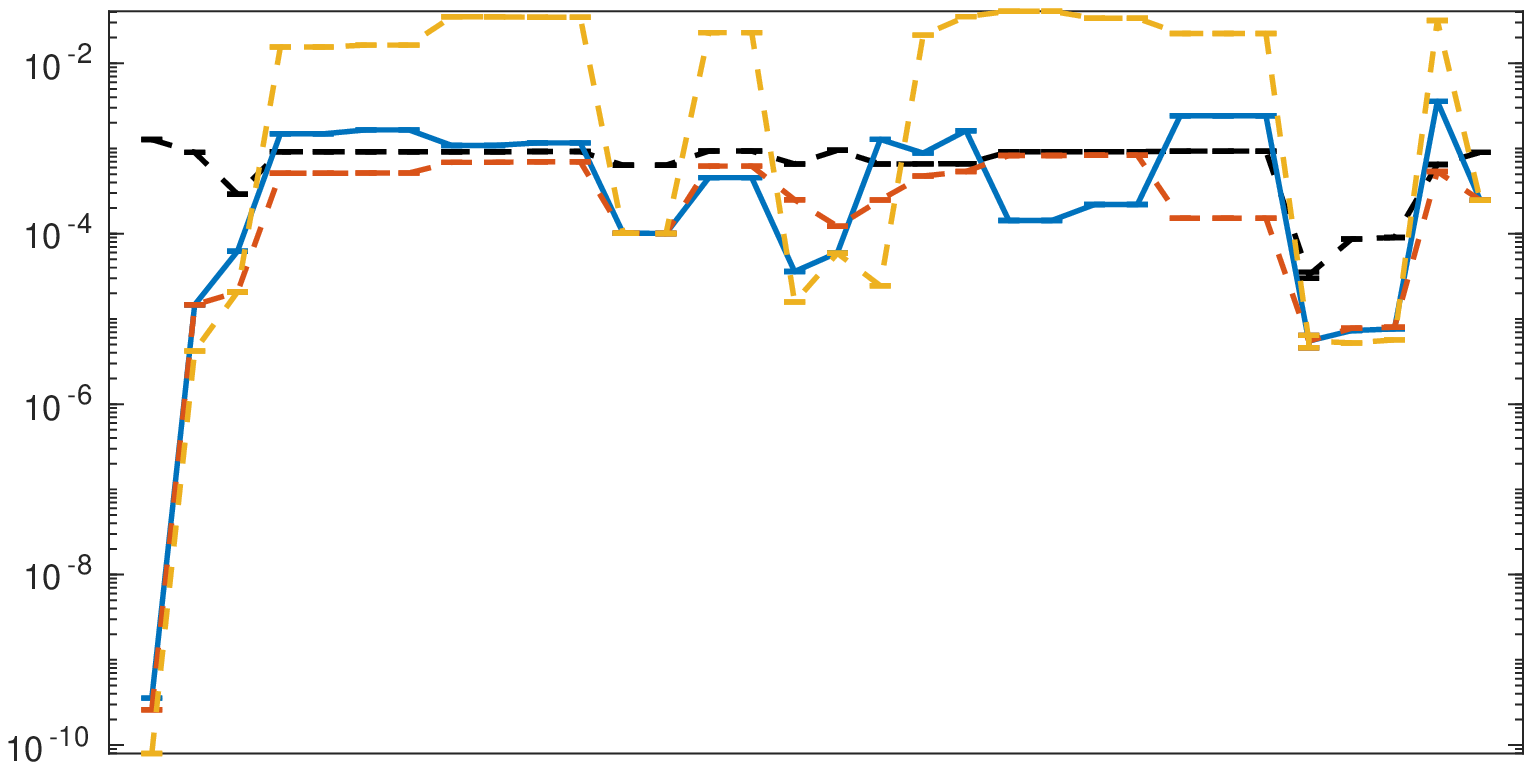} 
\end{minipage}
\parbox{6.1cm}{
\includegraphics[width=6.3cm]{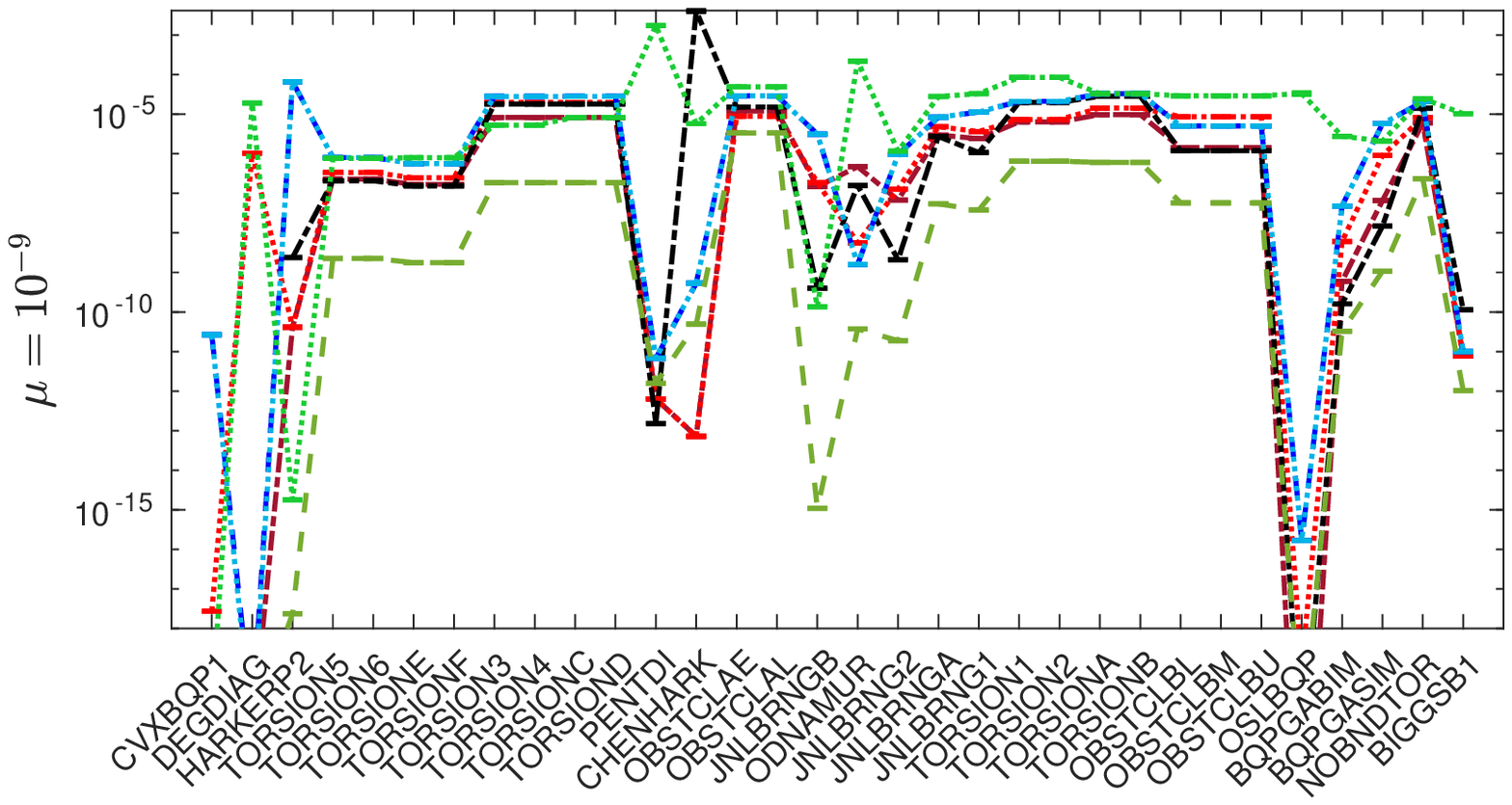}}
\hspace{-5mm} 
\begin{minipage}{6.1cm}
\includegraphics[width=6.3cm]{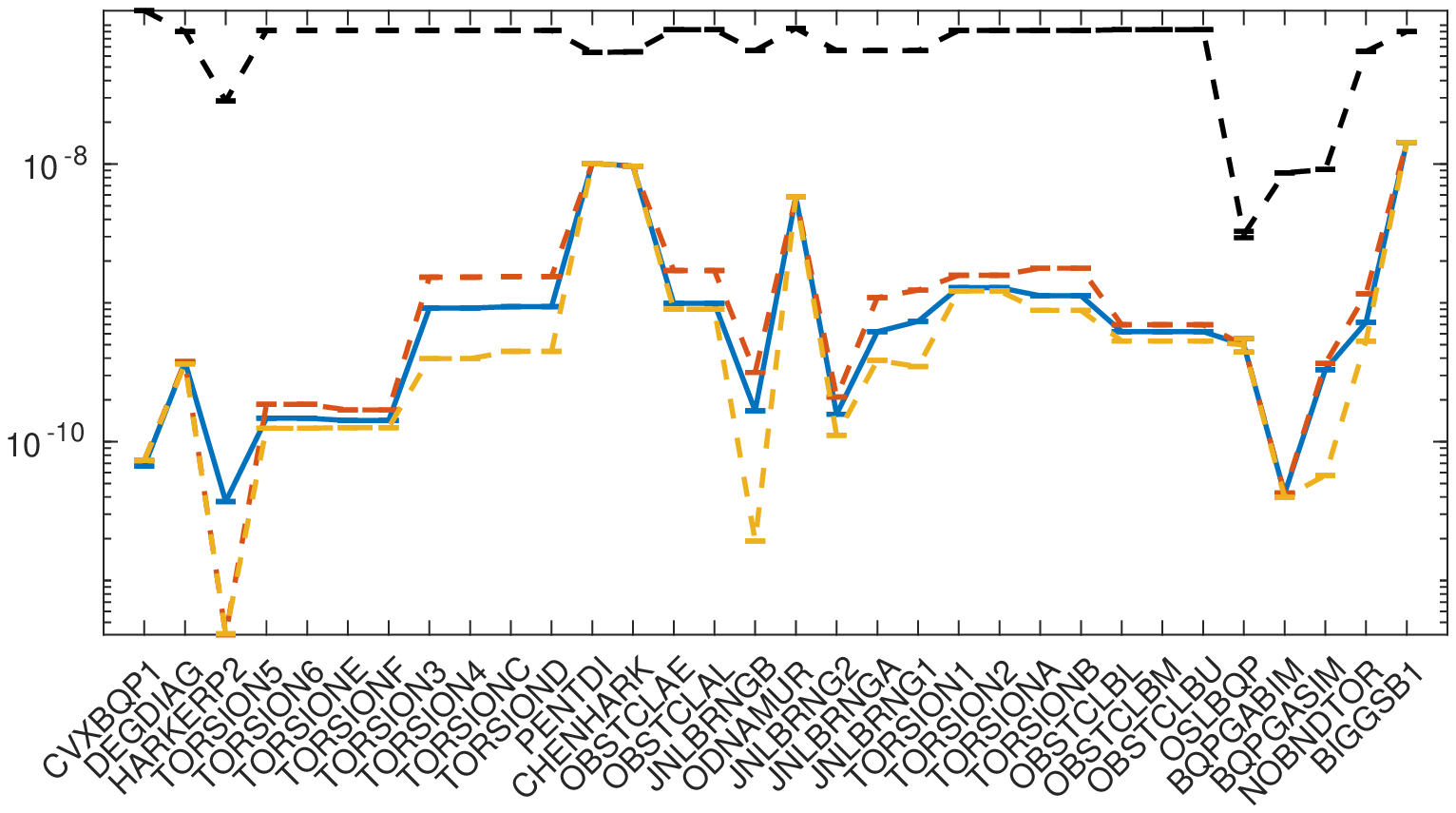} 
\end{minipage}
\caption{Mean approximation error and mean progress with measure $\| F_{\mu^+}\| $ with one standard deviation error bars for a collection of CUTEst test problems.}
\label{fig:CUTEstDiffMu}
\end{figure}
The partial approximate solution errors in
Figure~\ref{fig:CUTEstDiffMu} are significantly larger compared to
those of Figure~\ref{fig:randProbs}. This is expected since the
optimal solutions of the CUTEst test problems typically do not satisfy strict complementarity. Moreover, with the above strategy for determining the active and inactive sets, the smallest active multipliers may be in the order of $10^{-5}$. Small active multipliers may cause inaccurate components in the approximate solution of $\Delta x_{\mathcal{A}}^N$.
Nevertheless, the approximate solutions perform asymptotically similar to the Newton solution in terms of the measure $\| F_{\mu^+} \| $, as shown in Figure~\ref{fig:CUTEstDiffMu}. The figure also shows that the approximation error and the progress measure are not particularly sensitive to different initial solutions for smaller $\mu$, whereas some effects can be seen for larger $\mu$. The results may be improved and dis-improved depending on how the estimation of the active constraints at the solution is made.  We chose to give the results for the strategy described above which gives a potentially significant reduction in the computational iteration cost.

In practice the active constraints at the optimal solution are unknown and have to be estimated as the iterations proceed. The purpose of the following simulations is to give an initial indication of the performance of the proposed approximate solutions within a primal-dual interior-point framework. In particular, we focus on the behavior on problems that do not satisfy the assumptions for which the theoretical results are valid, but also on the robustness in regards to how the set of active constraints is estimated. Algorithm~\ref{alg:IPMnewt} and \ref{alg:simpleIPM} were considered with the aim of not drowning, or combining, approximation effects with other effects from more advanced features in more sophisticated methods. Algorithm~\ref{alg:IPMnewt} should here be seen as the reference method as it only contains Newton steps.
\begin{algorithm}[H]
\caption{Reference interior-point method for convex (\ref{eq:NLP}).}
\begin{footnotesize}
\begin{algorithmic}[1]
\Statex $k \gets 0$,\quad $\mu \gets 10^2$,  \quad $(x_k, \lambda_k) \gets$  Feasible point such that $\| F_{\mu}(x_k,\lambda_k) \|  < \mu,$ \quad $\mu \gets \sigma \mu$.
\Statex \textbf{While} $\| F_0(x_k,\lambda_k) \| > \epsilon$ \textbf{do} 
\Statex\hspace{\algorithmicindent} $(\Delta x_k, \Delta \lambda_k)  \qquad \gets$  (\ref{eq:PDsyst})
\Statex\hspace{\algorithmicindent} $(\alpha^P_k, \alpha^D_k)  \qquad  \quad   \gets \left( \min \{ 1,0.98\alpha^P_{max,k}\}, \min \{ 1,0.98\alpha^D_{max,k}\} \right)$ 
\Statex\hspace{\algorithmicindent} $(x_{k+1}, \lambda_{k+1})  \> \> \quad \gets (x_k + \alpha^P_k \Delta x_k, \lambda_k + \alpha^D_k \Delta \lambda_k )$ 
\Statex\hspace{\algorithmicindent} \textbf{If} $\| F_{\mu}(x_{k+1}, \lambda_{k+1})\| < \mu$
\Statex\hspace{\algorithmicindent}\hspace{\algorithmicindent}    $\mu \gets \sigma \mu$
\Statex\hspace{\algorithmicindent} \textbf{End}
\Statex\hspace{\algorithmicindent}  $k \gets k+1$
\Statex \textbf{End}
\end{algorithmic}
\end{footnotesize}
\label{alg:IPMnewt}
\end{algorithm} 
\vspace{-4mm}
\begin{algorithm}[H]
\caption{Simple interior-point method for convex (\ref{eq:NLP}).}
\begin{footnotesize}
\begin{algorithmic}[1]
\Statex $k \gets 0$,\quad $\mu \gets 10^2$,  \quad $(x_k, \lambda_k) \gets$  Feasible point such that $\| F_{\mu}(x_k,\lambda_k) \|  < \mu,$ \quad $\mu \gets \sigma \mu$.
\Statex \textbf{While} $\| F_0(x_k,\lambda_k) \| > \epsilon$ \textbf{do} 
\Statex\hspace{\algorithmicindent} Estimate active constraints to obtain active/inactive-sets
\Statex\hspace{\algorithmicindent} $(\Delta x_k, \Delta \lambda_k)  \qquad \gets$ (\ref{eq:prop:genCase:schurBased:dx}) or  (\ref{eq:prop:genCase:compBased:dx}) combined with (\ref{eq:genCase:Redx2syst_schur}), (\ref{eq:genCase:fullApprox:dlambdaI}) and (\ref{eq:genCase:fullApprox:dlambdaAls})
\Statex\hspace{\algorithmicindent} $(\alpha^P_k, \alpha^D_k)  \qquad  \quad   \gets \left( \min \{ 1,0.98\alpha^P_{max,k}\}, \min \{ 1,0.98\alpha^D_{max,k}\} \right)$ 
\Statex\hspace{\algorithmicindent} $(x_{k+1}, \lambda_{k+1})  \> \> \quad \gets (x_k + \alpha^P_k \Delta x_k, \lambda_k + \alpha^D_k \Delta \lambda_k )$ 
\Statex\hspace{\algorithmicindent} \textbf{If} $\| F_{\mu}(x_{k+1}, \lambda_{k+1})\| < \mu$
\Statex\hspace{\algorithmicindent}\hspace{\algorithmicindent}    $\mu \gets \sigma \mu$
\Statex\hspace{\algorithmicindent} \textbf{End}
\Statex\hspace{\algorithmicindent}  $k \gets k+1$
\Statex \textbf{End}
\end{algorithmic}
\label{alg:simpleIPM}
\end{footnotesize}
\end{algorithm} 
\vspace{-2mm}
\noindent At iteration $k$ of Algorithm~\ref{alg:IPMnewt} and
Algorithm~\ref{alg:simpleIPM}, $\alpha^P_{max,k}$ and
$\alpha^D_{max,k}$ are the maximum feasible step lengths for $x_k$
along $\Delta x_k$ and $\lambda_k$ along $\Delta \lambda_k$
respectively. Table~\ref{table:CUTEstData1} contains a comparison of
Algorithm~\ref{alg:IPMnewt} and two versions of
Algorithm~\ref{alg:simpleIPM} which differ in how $\Delta
x_\mathcal{A}$ is computed. The versions are denoted by
$\texttt{aN}^\texttt{S}$ and $\texttt{aN}^\texttt{C}$ as they use the
approximates $\Delta x_\mathcal{A}^S$ and $\Delta x_\mathcal{A}^C$
respectively. In Algorithm~\ref{alg:simpleIPM}, a constraint was
considered active if the distance to its bound was smaller than the
value of its multiplier and a threshold
$\tau_\mathcal{A}$. The procedure is thus a basic heuristic aimed at
determining the non-degenerate active constraints. In essence, the heuristic gives an estimate of set $\mathcal{A}_x$, compare to Definition~\ref{def:genCase:sets} in the
theoretical setting. The thresholds of the two versions
$\texttt{aN}^\texttt{S}$ and $\texttt{aN}^\texttt{C}$ were chosen to
$\tau_\mathcal{A} = \mu^{2/3}$ and the more restrictive
$\tau_\mathcal{A} = \mu^{3/4}$ respectively. This was done to show the effects of
two different thresholds $\tau_\mathcal{A}$, but also because numerical
experiments have shown that steps with Schur-based
  approximation are more robust at larger $\mu$, see
  Figure~\ref{fig:CUTEstDiffMu}.  Table~\ref{table:CUTEstData1} gives
a comparison of the number of iterations for different values of $\mu$
as well as the average cardinality of $\mathcal{I}_x$, the set of
indices corresponding to the estimated inactive components of $x$, i.e., the size
of the systems that has to be solved in every iteration. The symbol
"-" denotes the situation when the method failed to converge within 50
iterations for the corresponding $\mu$. If the method failed at a specific $\mu$ then Newton steps were performed instead until $\| F_\mu (x,\lambda) \| < \mu$. The order of the
problems is the same as in Figure~\ref{fig:CUTEstDiffMu}.
\begin{table}[H]
      \centering
\begin{footnotesize}
      \caption{Comparison of Algorithm~\ref{alg:IPMnewt}, ($\texttt{N}$), and two versions of Algorithm~\ref{alg:simpleIPM}, ($\texttt{aN}^\texttt{S}$ and $\texttt{aN}^\texttt{C}$) on a selection of CUTEst test problems.}
\begin{tabular}{|c|l|c|c|c|c|c|c|c|c|c|c|c|} \label{table:CUTEstData1}
& $\mu$& $10^{1}$ & $10^{0}$  & $10^{-2}$ & $10^{-3}$ & $10^{-5}$ & $10^{-6}$  & $10^{-8}$ & $10^{-9}$ & $10^{-10}$  \\      \hline \hline 
   \multirow{5}{*}{\rotatebox[origin=r]{90}{\parbox[c]{1cm}{\centering \texttt{CVXBQP1}\\ \tiny{$n_x$=10000}}}}
 &$\texttt{N}$&    3  & 3  &  2&   1 &  1  & 1 &    1  & 1  & 1\\
&$\texttt{aN}^\texttt{S}$  &  3  & 2 &    2 &  1 &   1 &  1 &   1 &  1  & 1\\
&$\texttt{aN}^\texttt{C}$  & 4 &  2&  1 &  1 &   1 &  1 &    1 &  1 &  1\\
&$|\bar{\mathcal{I}}_x^S |$  &  0 &  0  &  0 &  0 &    0 &  0  & 0  & 0  & 0\\
&$|\bar{\mathcal{I}}_x^C |$  &  0  & 0  &   0 &  0 &   0 &  0  & 0 &  0&   0  \\
 \hline \hline 
   \multirow{5}{*}{\rotatebox[origin=r]{90}{\parbox[c]{1cm}{\centering \texttt{DEGDIAG}\\ \tiny{$n_x$=10001}}}}
&$\texttt{N}$ &      4   &  4    & 3  &   3    &  3  &   3  &   2      &  2   &  2\\
&$\texttt{aN}^\texttt{S}$  &    4  &   4   &    3  &   3    &   3   &  3  &   2 &    2   &  2\\
&$\texttt{aN}^\texttt{C}$  &  12   &  7  &  42 &    5    &   3   &  3   &  2   &  2   &  2\\
&$|\bar{\mathcal{I}}_x^S |$  &  1   &  1   &   830 &  635 &   195  &  93  &   28   & 13   &  6\\
&$|\bar{\mathcal{I}}_x^C |$  &  1 &  142   & 93 &  709  & 428  & 237  & 100  &  56  &  32     \\   
 \hline \hline
   \multirow{5}{*}{\rotatebox[origin=r]{90}{\parbox[c]{1cm}{\centering \texttt{HARKERP2}\\ \tiny{$n_x$=1000}}}}
 &$\texttt{N}$ &  3  &  3    & 4  &  3   &  3  &  3   &  2  &  2  &  1 \\
 &$\texttt{aN}^\texttt{S}$ &  - & -   & - &  -  &   15  &  6    &  2 &   1 &   1 \\
 &$\texttt{aN}^\texttt{C}$ &  - &  -  & -  &  2   & 15  &  6   &  2 &   1  &  1 \\
 &$|\bar{\mathcal{I}}_x^S |$ &  -  &  -    & -  &  -  &   1  &  1  &    1   & 1  &  1\\
  &$|\bar{\mathcal{I}}_x^C |$  & -  &  -    &  - &  1   & 1  &  1 &   1   & 1  &  1\\
 \hline \hline
      \multirow{5}{*}{\rotatebox[origin=r]{90}{\parbox[c]{1cm}{\centering \texttt{TORSION5$^*$}\\ \tiny{$n_x$=5184}}}}
&$\texttt{N}$ &       1  &    1       & 2   &   3     &    3  &    3  &   3    &  2   &   2\\ 
&$\texttt{aN}^\texttt{S}$ &  1  &    1      &  2   &   3     &   3   &   4    &   3    &  2    &  2\\ 
&$\texttt{aN}^\texttt{C}$ &  29   &  -     & -  &   -    &   3    &  3    &   3   &   2   &   2\\ 
&$|\bar{\mathcal{I}}_x^S |$ &   0  &    0    & 2564 &  4535   & 5083  & 2802  &  2277   & 968  &  960\\ 
&$|\bar{\mathcal{I}}_x^C |$ &  0   &   -   & -  & -   & 5101 &  5064  &  2376 &  2944  & 2936     \\  
 \hline \hline
     \multirow{5}{*}{\rotatebox[origin=r]{90}{\parbox[c]{1cm}{\centering \texttt{TORSIONE$^*$}\\ \tiny{$n_x$=5184}}}}
&$\texttt{N}$        &   1   &   1  &    2     & 3    &    3   &   3   &       3    &  2   &   2\\
&$\texttt{aN}^\texttt{S}$   &  1   &   1       &  2    &  3     &  3   &   4     &   3   &   2    &  2\\
&$\texttt{aN}^\texttt{C}$   &  29  &   - &      -   &  -   &    3  &    3  &        3   &   2  &    2\\
&$|\bar{\mathcal{I}}_x^S |$  &   0  &    0  &    2564 &  4535  & 5171  & 2872  &  2379   & 984 &   976\\
&$|\bar{\mathcal{I}}_x^C |$  &     0   &   -  &  -  & -  & 5184 &  5171  &  2387  & 3080  & 3080     \\  
 \hline  \hline 
     \multirow{5}{*}{\rotatebox[origin=r]{90}{\parbox[c]{1cm}{\centering \texttt{TORSION3$^*$} \\ \tiny{$n_x$=5184}}}}
&$\texttt{N}$   &   1    &    1         &     2    &    2     &     4   &     3      &   3    &    3  &      3 \\
&$\texttt{aN}^\texttt{S}$   &  1    &      1    &    2    &    3       &     4    &    3     &    3     &   3  &      3 \\
&$\texttt{aN}^\texttt{C}$   &  29   &    -   &    -   &    -      &   4   &     3     &       3  &     3   &     3 \\
&$|\bar{\mathcal{I}}_x^S |$    &  0   &     0   &  2564  &   4535    &  5062   &  4933   &   2931   &  2867   &  1872 \\
&$|\bar{\mathcal{I}}_x^C |$  &    0   &  -   &  -  &   -    &  5184   &  5069  &   4008   &  3035   &  2931     \\  
 \hline  \hline 
      \multirow{5}{*}{\rotatebox[origin=r]{90}{\parbox[c]{1cm}{\centering \texttt{TORSIONC$^*$}\\ \tiny{$n_x$=5184}}}}
&$\texttt{N}$        &     1  &    1    &    2    &  2    &    3    &  3     &   3  &    3    &  3\\
&$\texttt{aN}^\texttt{S}$   &   1  &    1   &  2   &   3     &    3    &  3     &   3    &  3    &  3\\
&$\texttt{aN}^\texttt{C}$   &     29  &   -  &    -  &   -    &   3    &  3    &  3   &   3  &    3\\
&$|\bar{\mathcal{I}}_x^S |$  &  0   &   0   & 2564   &4535 &    5184 &  5104  &  3043 &  2976  & 1907\\
&$|\bar{\mathcal{I}}_x^C |$  &   0   &   -  & - &  -  & 5184  & 5179  & 4109  & 3059  & 3040     \\  
 \hline
\end{tabular}
\end{footnotesize}
\end{table}
\begin{table}[H]
      \centering
\begin{footnotesize}
     \begin{flushleft}
{\textbf{Table~\ref{table:CUTEstData1} continued:}}
\end{flushleft}
\begin{tabular}{|c|l|c|c|c|c|c|c|c|c|c|c|c|} 
& $\mu$& $10^{1}$ & $10^{0}$  & $10^{-2}$ & $10^{-3}$ & $10^{-5}$ & $10^{-6}$  & $10^{-8}$ & $10^{-9}$ & $10^{-10}$  \\      \hline \hline 
        \multirow{5}{*}{\rotatebox[origin=r]{90}{\parbox[c]{1cm}{\centering \texttt{PENTDI}\\ \tiny{$n_x$=5000}}}}
&$\texttt{N}$   &  4    &  4     & 4   &   4       &  4  &    4     &  4 &     4    &  4\\
&$\texttt{aN}^\texttt{S}$  & 6   &   7    &   7   &   7    &   5  &    4     &    4   &   4    &  4\\
&$\texttt{aN}^\texttt{C}$ &     -  &   -  &    - &   -   &   4  &    4      &  4 &     4  &    4\\
&$|\bar{\mathcal{I}}_x^S |$   &     0   &   0    &  2  &    2 &   1000  & 1873    & 2498 &  2498  & 2498\\
&$|\bar{\mathcal{I}}_x^C |$ &   - &    -  &  -   &  -    &2498   &2498   &   2498  & 2498  & 2498\\
  \hline  \hline 
            \multirow{5}{*}{\rotatebox[origin=r]{90}{\parbox[c]{1cm}{\centering \texttt{CHENHARK}\\ \tiny{$n_x$=5000}}}}
&$\texttt{N}$   &    4  &    4    &   4  &    4    &    4  &    4     &   4   &   3   &   3\\
&$\texttt{aN}^\texttt{S}$  &  4   &   4  & 4  &    4   &    4   &   4      &  4   &   3     & 3\\
&$\texttt{aN}^\texttt{C}$ &     4  &   -  &    4   &   4     &  4    &  4     &   4 &     3 &     3\\
&$|\bar{\mathcal{I}}_x^S |$   & 4999 &  4567  &  2502 &  2502  &  2502 &  2502 &  2502 &  2502  & 2502\\
&$|\bar{\mathcal{I}}_x^C |$ &  4999  & -   &  2502  & 2502 & 2502 &  2502  & 2502  & 2502 &  2502\\
  \hline \hline 
              \multirow{5}{*}{\rotatebox[origin=r]{90}{\parbox[c]{1cm}{\centering \texttt{JNLBRNGB}\\ \tiny{$n_x$=5329}}}}
&$\texttt{N}$   &     4   &   4     &   4  &    3    &   3   &   3      &   3   &   2  &    2\\
&$\texttt{aN}^\texttt{S}$  &  6  &   15   &    16  &   19    &  20   &   9     &   3    &  2   &   2\\
&$\texttt{aN}^\texttt{C}$ &   - &   -   &  - &   -   &    3     & 3      &   3  &    2   &   2\\
&$|\bar{\mathcal{I}}_x^S |$   &  5196  & 5171  &  5157 &  5190  &  3220  & 3291  & 3111  & 3063  & 3026\\
&$|\bar{\mathcal{I}}_x^C |$ &  - &  -  & - &  -  & 4899  & 4182   &  3843 &  3758  & 3283\\
  \hline \hline
                \multirow{5}{*}{\rotatebox[origin=r]{90}{\parbox[c]{1cm}{\centering \texttt{OBSTCLAE$^*$}\\ \tiny{$n_x$=5329}}}}
&$\texttt{N}$   &      4   &    4    &    4    &   4     &    3  &     3    &   3  &     3   &    2\\
&$\texttt{aN}^\texttt{S}$  & 4  &     4    &    4    &   4     &   5 &      4     &   3  &     3    &   2\\
&$\texttt{aN}^\texttt{C}$ &   4   &    4   &    4   &    4     &     3  &     3   &     3  &     3   &    2\\
&$|\bar{\mathcal{I}}_x^S |$   & 5329 &   5329 &   5329 &   5329 &    5063&    4153   &  3766 &   3268  &  2978\\
&$|\bar{\mathcal{I}}_x^C |$ & 5329 &   5329 &  5329  &  5329   &  5290 &   5313  &    4539  &  3786 &   4158\\
  \hline \hline
     \multirow{5}{*}{\rotatebox[origin=r]{90}{\parbox[c]{1cm}{\centering \texttt{JNLBRNG2}\\ \tiny{$n_x$=5329}}}}
&$\texttt{N}$   &       4  &    4   &    4&      3     &    3   &   3   &       3   &   2  &    2\\
&$\texttt{aN}^\texttt{S}$   &     5  &    9    &  11   &   9  &   11  &    6      &  3    &  2   &   2\\
&$\texttt{aN}^\texttt{C}$   &   -   &  -  &    -  &  -  &    3  &    3       & 3  &    2  &    2\\
&$|\bar{\mathcal{I}}_x^S |$   &    5206  & 5177  & 5187 &  5217  &  3561 &  3622  &   3315  & 3270  & 3232\\
&$|\bar{\mathcal{I}}_x^C |$   &  -  &  - &  - &  - &  4736  & 4297  & 3981 &  3861 &  3450\\
  \hline \hline
                    \multirow{5}{*}{\rotatebox[origin=r]{90}{\parbox[c]{1cm}{\centering \texttt{OBSTCLBL$^*$}\\ \tiny{$n_x$=5329}}}}
&$\texttt{N}$   &       1    &  1   &    2  &    3    &   3   &   3    &   3    &  3    &  2\\
&$\texttt{aN}^\texttt{S}$  &   1   &   1   &  3    &  6   &     4   &   3      &  3    &  3    &  2\\
&$\texttt{aN}^\texttt{C}$ &    24  &   -  &   -  &   -   &    3    &  3   &    3     & 3  &    2\\
&$|\bar{\mathcal{I}}_x^S |$   &   0    &  0    & 1121 &  3190  &4469  & 4318 &  3950 &  3903  & 3880\\
&$|\bar{\mathcal{I}}_x^C |$ &   0  &   - & - &  -  &  4862  & 4515  &  4212  & 4077  & 4002\\
     \hline \hline 
               \multirow{5}{*}{\rotatebox[origin=r]{90}{\parbox[c]{1cm}{\centering \texttt{JNLBRNGA}\\ \tiny{$n_x$=5329}}}}
&$\texttt{N}$   &        4  &    4   &    4   &   4    &   3  &    3  &   3   &   3  &    2\\
&$\texttt{aN}^\texttt{S}$   &     4   &   4      &   4   &   4      &  3  &    3    &    3    &  3   &   2\\
&$\texttt{aN}^\texttt{C}$   &    4    &  4     &  4    &  4      &  3   &   3    &     3    &  3   &   2\\
&$|\bar{\mathcal{I}}_x^S |$   &  5329  & 5329  & 5329 &  5329   & 5329 &  4991 &  4257 &  3985 &  3718\\
&$|\bar{\mathcal{I}}_x^C |$ & 5329 &  5329 &  5329 &  5329 &  5329  & 5279  &  4728 &  4381  & 4506\\
  \hline \hline
         \multirow{5}{*}{\rotatebox[origin=r]{90}{\parbox[c]{1cm}{\centering \texttt{TORSION1$^*$}\\ \tiny{$n_x$=5184}}}}
&$\texttt{N}$ &        1    &  1    &   1   &   2      &   4   &   3 &      3  &    3   &   3\\ 
&$\texttt{aN}^\texttt{S}$  &   1   &   1   &    1   &   2   &    4    &  3  &   3  &    3   &   3\\ 
&$\texttt{aN}^\texttt{C}$ &   29   & -  &    -    & -   &   4  &    3   &    3  &    3   &   3\\ 
&$|\bar{\mathcal{I}}_x^S |$ &  0    &  0  & 1764  & 4490   & 5098   &5032  & 4133  & 4040  & 4024\\ 
&$|\bar{\mathcal{I}}_x^C |$ &  0    &  - & - &  -  &  5184  & 5061  &  5011 &  4968  & 4080 \\  
\hline \hline
       \multirow{5}{*}{\rotatebox[origin=r]{90}{\parbox[c]{1cm}{\centering \texttt{JNLBRNG1}\\ \tiny{$n_x$=5329}}}}
&$\texttt{N}$   &    4  &    4   &    4    &  4    &   3 &     3     &   3     & 3  &    2\\
&$\texttt{aN}^\texttt{S}$   &    4   &   5   &   6  &    5    &     3   &   4    &   3      & 3    &  2\\
&$\texttt{aN}^\texttt{C}$   &   10  &  -   &   39  &   -   &    3   &   3   &    3     & 3   &   2\\
&$|\bar{\mathcal{I}}_x^S |$   & 5322  & 5319   & 5319 &  5320  & 5329  & 4972 &  4331 &  4028 &  3800\\
&$|\bar{\mathcal{I}}_x^C |$   & 5312  & -  &  5319 &  -&  5329  & 5283  &  4993 &  4446  & 4536\\
  \hline       
\end{tabular}
\end{footnotesize}
\end{table}
\begin{table}[H]
      \centering
\begin{footnotesize}
 \begin{flushleft}
{\textbf{Table~\ref{table:CUTEstData1} continued:}}
\end{flushleft}
\begin{tabular}{|c|l|c|c|c|c|c|c|c|c|c|c|c|} 
& $\mu$& $10^{1}$ & $10^{0}$ & $10^{-2}$ & $10^{-3}$ & $10^{-5}$ & $10^{-6}$ & $10^{-8}$ & $10^{-9}$ & $10^{-10}$  \\ 
\hline \hline       
   \multirow{5}{*}{\rotatebox[origin=r]{90}{\parbox[c]{1cm}{\centering \texttt{TORSIONA$^*$} \\ \tiny{$n_x$=5184}}}}
&$\texttt{N}$        &   1   &   1     &  1    &  2      & 4  &    3  &       3  &    3  &    2\\  
&$\texttt{aN}^\texttt{S}$   &   1   &   1     &   1   &   2      &  4   &   3    &   3    &  3     & 2\\  
&$\texttt{aN}^\texttt{C}$   &   29  &   - &     -   &  -     &  4 &     3  &     3    &  3  &    2\\  
&$|\bar{\mathcal{I}}_x^S |$  &   0    &  0   &  1764  & 4490   & 5184  & 5184  & 4261 &  4173  & 4416\\  
&$|\bar{\mathcal{I}}_x^C |$  &   0   &   -  & -  & -   & 5184 &  5184  & 5168 &  5125  & 4444    \\  
 \hline \hline
        \multirow{5}{*}{\rotatebox[origin=r]{90}{\parbox[c]{1cm}{\centering \texttt{OSLBQP}\\ \tiny{$n_x$=8}}}}
&$\texttt{N}$ &    2   &3  &  3 &  2 &  2 &  3  & 3 &  2 &  3\\
&$\texttt{aN}^\texttt{S}$  &  2  & 3&  3  & 2 &   2 &  3  &  3 &  2 &  3\\
&$\texttt{aN}^\texttt{C}$ &  4 &  5&   2 &  3  & 3  & 2 &   2 &  3 &  2\\
&$|\bar{\mathcal{I}}_x^S |$ & 4  & 2 &   4 &  6 &   6 &  6 &  6 &  6 &  6\\
&$|\bar{\mathcal{I}}_x^C |$ &  1 &  2 &  6 &  6  &  6  & 6 & 6 &   6  & 6 \\  
  \hline \hline
         \multirow{5}{*}{\rotatebox[origin=r]{90}{\parbox[c]{1cm}{\centering \texttt{BQPGABIM}\\ \tiny{$n_x$=46}}}}
&$\texttt{N}$ &   1 &   2   &  3  &  3  &  3 &   3  &  2  &  2   & 1\\
&$\texttt{aN}^\texttt{S}$  & 1  &  3 &    9 &   7    &  3  & 3 &  2  &  2  &  1\\
&$\texttt{aN}^\texttt{C}$ & - &  -  & -  & -   & - &  - &   2  &  2 &   1\\
&$|\bar{\mathcal{I}}_x^S |$ &  0  &  0  &  4  & 24   & 29 &  31 &   37 &  36 &  36\\
&$|\bar{\mathcal{I}}_x^C |$ &  -   & -  & -  & - &  - &  - &  39  & 38 & 38 \\  
 \hline \hline
        \multirow{5}{*}{\rotatebox[origin=r]{90}{\parbox[c]{1cm}{\centering \texttt{BQPGASIM}\\ \tiny{$n_x$=50}}}}
&$\texttt{N}$ &     1  &  2  &  3  &  3 &  3  &  3  &  2 &   2  &  2\\
&$\texttt{aN}^\texttt{S}$  &  1 &   3  &   10  &  7   & 3  &  3 &   2  &  2   & 2\\
&$\texttt{aN}^\texttt{C}$ & - &  -  & - &  -& -  & - &  2  &  2   & 2\\
&$|\bar{\mathcal{I}}_x^S |$ & 0  &  0   &  4  & 27  &  32 &  34  &  41 &  42  & 42\\
&$|\bar{\mathcal{I}}_x^C |$ &  -  &  -  & - &  - &  - &  - & 43  & 42  & 42 \\  
\hline  \hline 
      \multirow{5}{*}{\rotatebox[origin=r]{90}{\parbox[c]{1cm}{\centering \texttt{NOBNDTOR}\\ \tiny{$n_x$=5184}}}}
&$\texttt{N}$   &       1   &   1     &  2   &   2    &   4   &   4   &   3   &   3   &   2\\
&$\texttt{aN}^\texttt{S}$  &  1    &  1  &    2  &    2   &    4  &    4   &    3    &  3   &   2\\
&$\texttt{aN}^\texttt{C}$ &    21 &  -   &  - &  -      &   4    &  4    &   3   &   3  &    2\\
&$|\bar{\mathcal{I}}_x^S |$   &   2592 &  2592  &  3874&   4837  &  5132 &  5107 &  4621  & 4570  & 4681\\
&$|\bar{\mathcal{I}}_x^C |$ &  2592  & -  &  - &  -   & 5183 &  5148 & 5077 &  5050  & 4704\\
  \hline  \hline 
   \multirow{5}{*}{\rotatebox[origin=r]{90}{\parbox[c]{1cm}{\centering \texttt{BIGGSB}\\ \tiny{$n_x$=5000}}}}
&$\texttt{N}$ &       1   &   1    &   2   &   3    &    3  &    3   &   4     & 4   &   4\\
&$\texttt{aN}^\texttt{S}$   &  1   &   1   &   4  &    3    &   3  &    3  &     4     & 4   &   4\\
&$\texttt{aN}^\texttt{C}$   &  11  &   13    & 2  &    3   &     3   &   3   &  4     & 4    &  4\\
&$|\bar{\mathcal{I}}_x^S |$  &  1    &  1   &  4999 &  4999  & 4998  & 4998  &  4998 &  4998 &  4998\\
&$|\bar{\mathcal{I}}_x^C |$  &   1   &   1   & 5000  & 4999 &  4998 &  4998  & 4998 &  4998  & 4998  \\
 \hline
\end{tabular}
\end{footnotesize}
\end{table}
\footnotetext[1]{The tables would be identical for other versions of the same problem and are therefore omitted}
The results in Table~\ref{table:CUTEstData1} display similar characteristics as the results in Figure~\ref{fig:CUTEstDiffMu}. The version associated with the Schur-based approximate solution, $\texttt{aN}^\texttt{S}$ of Algorithm~\ref{alg:simpleIPM}, makes sufficient progress at $\mu \in [10^2,10^{-2})$, often at a relatively low computational cost. Version $\texttt{aN}^\texttt{S}$ converges at $\mu \in [10^{-2}, 10^{-6}]$, however, often while solving relatively large systems due to the difficulty of estimating $\mathcal{A}_x$. At $\mu \in (10^{-6},0)$ the asymptotic behavior becomes more pronounced. Consequently, $\texttt{aN}^\texttt{S}$ does similar in terms of iteration count to Algorithm~\ref{alg:IPMnewt} while solving systems of reduced size. Version $\texttt{aN}^\texttt{S}$ converges at all considered $\mu$ in all problems of Table~\ref{table:CUTEstData1}, except on \texttt{HARKERP2} for larger $\mu$. The version associated with the complementarity-based approximate solution, $\texttt{aN}^\texttt{C}$ of Algorithm~\ref{alg:simpleIPM}, tend to perform poorly overall for $\mu \in [10^2,10^{-2})$ and parts of $[10^{-2}, 10^{-6}]$. Although $\texttt{aN}^\texttt{C}$ converges for large $\mu$, this is often at the expense of either solving relatively large systems or performing many iterations. In general, $\texttt{aN}^\texttt{C}$ performs similar to Algorithm~\ref{alg:IPMnewt} for $\mu$ in the approximate region $[10^{-5},0)$ while solving systems of reduced size. The versions $\texttt{aN}^\texttt{S}$ and $\texttt{aN}^\texttt{C}$ have similar asymptotic performance, however in general, $\texttt{aN}^\texttt{S}$ performs better for larger values of $\mu$, as also indicated by previous results in Figure~\ref{fig:CUTEstDiffMu}.

Finally we show results for the two Newton-like approaches, mentioned in Section~\ref{subsec:partialApp}, in a simple primal-dual interior-point setting. The approximate intermediate step method and the approximate higher-order method are described in Algorithm~\ref{alg:newtLikeST} and Algorithm~\ref{alg:newtLikeHO} respectively. In contrast to Section~\ref{subsec:partialApp}, here the intermediate iterate is required to be strictly feasible. 
The total number of iterations required at different intervals of $\mu$ with the two Newton-like approaches is shown in Figure~\ref{fig:modNewtonIPMs}. 
The figure shows results for three different choices of $(\Delta x^E, \Delta \lambda^E)$. Moreover, the selection of which components to update was done as the iterations proceeded similarly as above. Note however that it is not necessary to label each constraint and each component of $\lambda$ as active or inactive in this case, some may be defined as neither. The set of indices corresponding to active constraints, $\mathcal{A}_x$, was estimated as above and the sets of indices corresponding to inactive $\lambda$, $\mathcal{I}_l$ and $\mathcal{I}_u$, see Definition~\ref{def:genCase:sets}, were estimated analogously. I.e., a multiplier was considered inactive if its value was smaller than the distance of the corresponding $x$ to its feasibility bound and a threshold $\tau_\mathcal{I}$. Table~\ref{table:modNewton} shows how the nonzero components of $(\Delta x^E, \Delta \lambda^E)$ were chosen in the different versions of the approaches as well as the different thresholds $\tau_\mathcal{A}$ and $\tau_\mathcal{I}$.
\begin{table}[H] \renewcommand{\arraystretch}{1.3}
      \caption{Thresholds and nonzero components of the steps to $(x^E, \lambda^E) $ for the three versions compared in Figure~\ref{fig:modNewtonIPMs}.}
      \begin{footnotesize}
\begin{tabular}{l|c|c|} \label{table:modNewton}
Nonzero components in  $(\Delta x^E, \Delta \lambda^E)$  & $\tau_\mathcal{A}$  &  $\tau_\mathcal{I}$ \\
\hline 
$\Delta x^S_\mathcal{A}$                                                                      &     $\mu^{1/2}$ &   \\
 $\Delta x^S_\mathcal{A} $, $  \Delta \lambda^C_\mathcal{I}$ & $\mu^{1/2}$    & $\mu^{3/4}$ \\
 $\Delta x^C_\mathcal{A} $, $ \Delta \lambda^C_\mathcal{I}$  & $\mu^{3/4}$   & $\mu^{3/4}$ \\
     \end{tabular} \renewcommand{\arraystretch}{1.0}
     \end{footnotesize}
\end{table} 
\begin{algorithm}[H]
\caption{Simple interior-point method with an approximate intermediate step for convex (\ref{eq:NLP}).}
\begin{footnotesize}
\begin{algorithmic}[1]
\Statex $k \gets 0$,\quad $\mu \gets 10^2$,  \quad $(x_k, \lambda_k) \gets$  Feasible point such that $\| F_{\mu}(x_k,\lambda_k) \|  < \mu,$ \quad $\mu \gets \sigma \mu$.
\Statex \textbf{While} $\| F_0(x_k,\lambda_k) \| > \epsilon$ \textbf{do} 
\Statex\hspace{\algorithmicindent} Estimate active constraints to obtain active/inactive-sets
\Statex\hspace{\algorithmicindent} $(\Delta x^E_k, \Delta \lambda^E_k)  \quad  \> \> \> \gets$ (\ref{eq:prop:genCase:partialAstep})
\Statex\hspace{\algorithmicindent} $(\alpha^{E,P}_k, \alpha^{E,D}_k)  \quad \>  \gets \left( \min \{ 1,0.98\alpha^{E,P}_{max,k}\}, \min \{ 1,0.98\alpha^{E,D}_{max,k}\} \right)$ 
\Statex\hspace{\algorithmicindent} $(x^E_{k}, \lambda^E_{k})  \qquad \quad \> \gets (x_k + \alpha^{E,P}_k \Delta x_k^E, \lambda_k + \alpha^{E,D}_k \Delta \lambda_k^E)$ 
\Statex\hspace{\algorithmicindent} $(\Delta x_k, \Delta \lambda_k)  \qquad  \gets$   (\ref{eq:modNewtonAstep})
\Statex\hspace{\algorithmicindent} $(\alpha^P_k, \alpha^D_k)  \qquad  \quad   \gets \left( \min \{ 1,0.98\alpha^P_{max,k}\}, \min \{ 1,0.98\alpha^D_{max,k}\} \right)$ 
\Statex\hspace{\algorithmicindent} $(x_{k+1}, \lambda_{k+1})  \quad \> \> \gets (x_k^E + \alpha^P_k \Delta x_k, \lambda_k^E + \alpha^D_k \Delta \lambda_k )$ 
\Statex\hspace{\algorithmicindent} \textbf{If} $\| F_{\mu}(x_{k+1}, \lambda_{k+1})\| < \mu$
\Statex\hspace{\algorithmicindent}\hspace{\algorithmicindent}    $\mu \gets \sigma \mu$
\Statex\hspace{\algorithmicindent} \textbf{End}
\Statex\hspace{\algorithmicindent}  $k \gets k+1$
\Statex \textbf{End}
\end{algorithmic}
\label{alg:newtLikeST}
\end{footnotesize}
\end{algorithm} 
\begin{algorithm}[H]
\begin{footnotesize}
\caption{Simple interior-point method with approximate higher-order solve for convex (\ref{eq:NLP}).}
\begin{algorithmic}[1]
\Statex $k \gets 0$,\quad $\mu \gets 10^2$,  \quad $(x_k, \lambda_k) \gets$  Feasible point such that $\| F_{\mu}(x_k,\lambda_k) \|  < \mu,$ \quad $\mu \gets \sigma \mu$.
\Statex \textbf{While} $\| F_0(x_k,\lambda_k) \| > \epsilon$ \textbf{do} 
\Statex\hspace{\algorithmicindent} Estimate active constraints to obtain active/inactive-sets
\Statex\hspace{\algorithmicindent} $(\Delta x^E_k, \Delta \lambda^E_k)  \quad  \> \> \> \gets$ (\ref{eq:prop:genCase:partialAstep})
\Statex\hspace{\algorithmicindent} $(\alpha^{E,P}_k, \alpha^{E,D}_k)  \quad \>  \gets \left( \min \{ 1,0.98\alpha^{E,P}_{max,k}\}, \min \{ 1,0.98\alpha^{E,D}_{max,k}\} \right)$ 
\Statex\hspace{\algorithmicindent} $(x^E_{k}, \lambda^E_{k})  \qquad \quad \> \gets (x_k + \alpha^{E,P}_k \Delta x_k^E, \lambda_k + \alpha^{E,D}_k \Delta \lambda_k^E)$ 
\Statex\hspace{\algorithmicindent} $(\Delta x_k, \Delta \lambda_k)  \qquad  \gets$ (\ref{eq:modNewtonHO})
\Statex\hspace{\algorithmicindent} $(\alpha^P_k, \alpha^D_k)  \qquad  \quad   \gets \left( \min \{ 1,0.98\alpha^P_{max,k}\}, \min \{ 1,0.98\alpha^D_{max,k}\} \right)$ 
\Statex\hspace{\algorithmicindent} $(x_{k+1}, \lambda_{k+1})  \quad \> \> \gets (x_k + \alpha^P_k \Delta x_k, \lambda_k + \alpha^D_k \Delta \lambda_k )$ 
\Statex\hspace{\algorithmicindent} \textbf{If} $\| F_{\mu}(x_{k+1}, \lambda_{k+1})\| < \mu$
\Statex\hspace{\algorithmicindent}\hspace{\algorithmicindent}    $\mu \gets \sigma \mu$
\Statex\hspace{\algorithmicindent} \textbf{End}
\Statex\hspace{\algorithmicindent}  $k \gets k+1$
\Statex \textbf{End}
\end{algorithmic}
\label{alg:newtLikeHO}
\end{footnotesize}
\end{algorithm} 
\noindent In Algorithm~\ref{alg:newtLikeST} and Algorithm~\ref{alg:newtLikeHO} at iteration $k$, $\alpha^P_{max,k}$, $\alpha^D_{max,k}$, $\alpha^{E,P}_{max,k}$ and $\alpha^{E,D}_{max,k}$ are for the prescribed steps defined analogously as in Algorithm~\ref{alg:IPMnewt}.
\begin{figure}[H]
\centering
\parbox{6.1cm}{
\includegraphics[width=6.3cm]{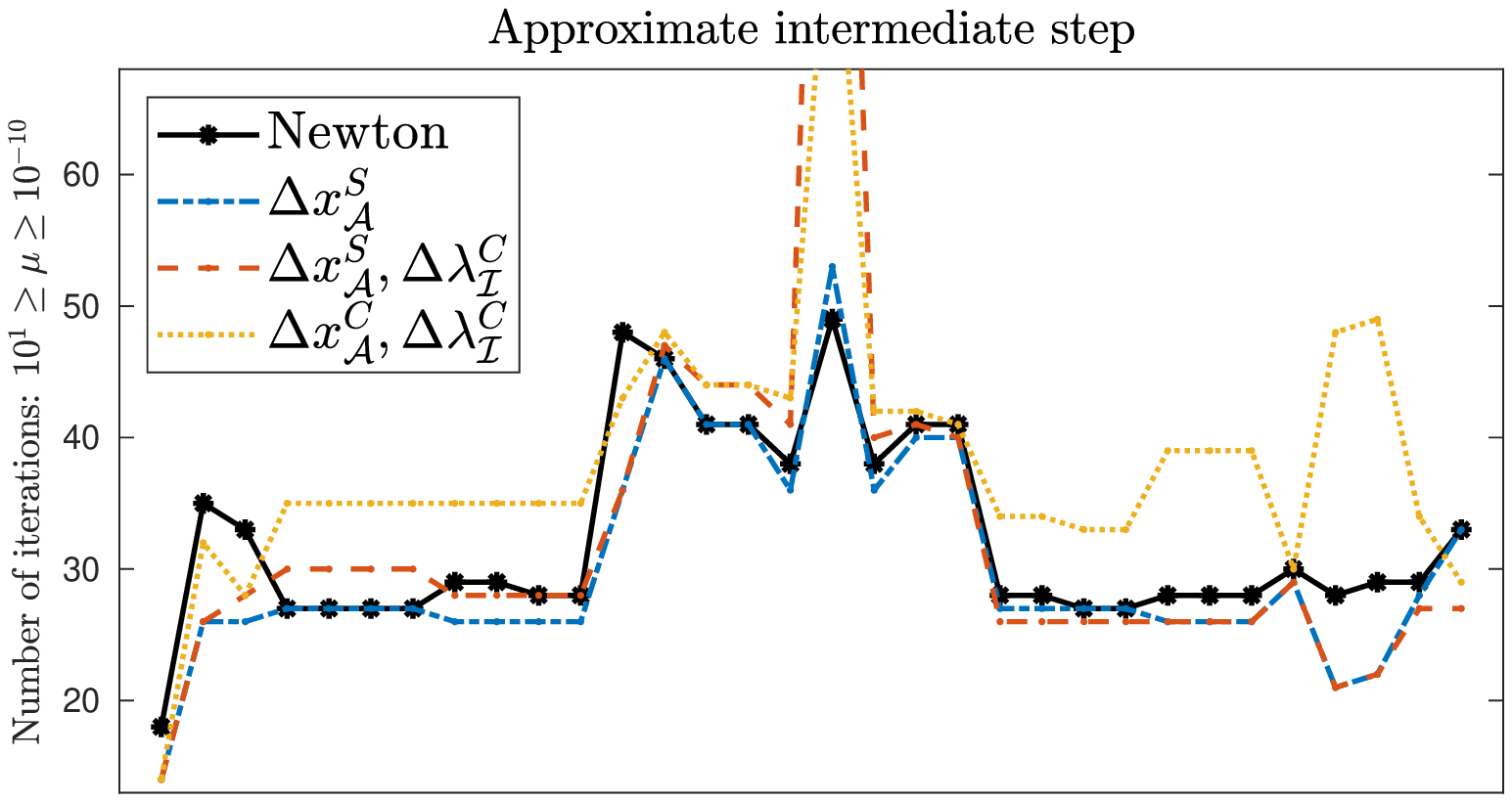}}
\hspace{-5mm}
\begin{minipage}{6.1cm}
\includegraphics[width=6.3cm]{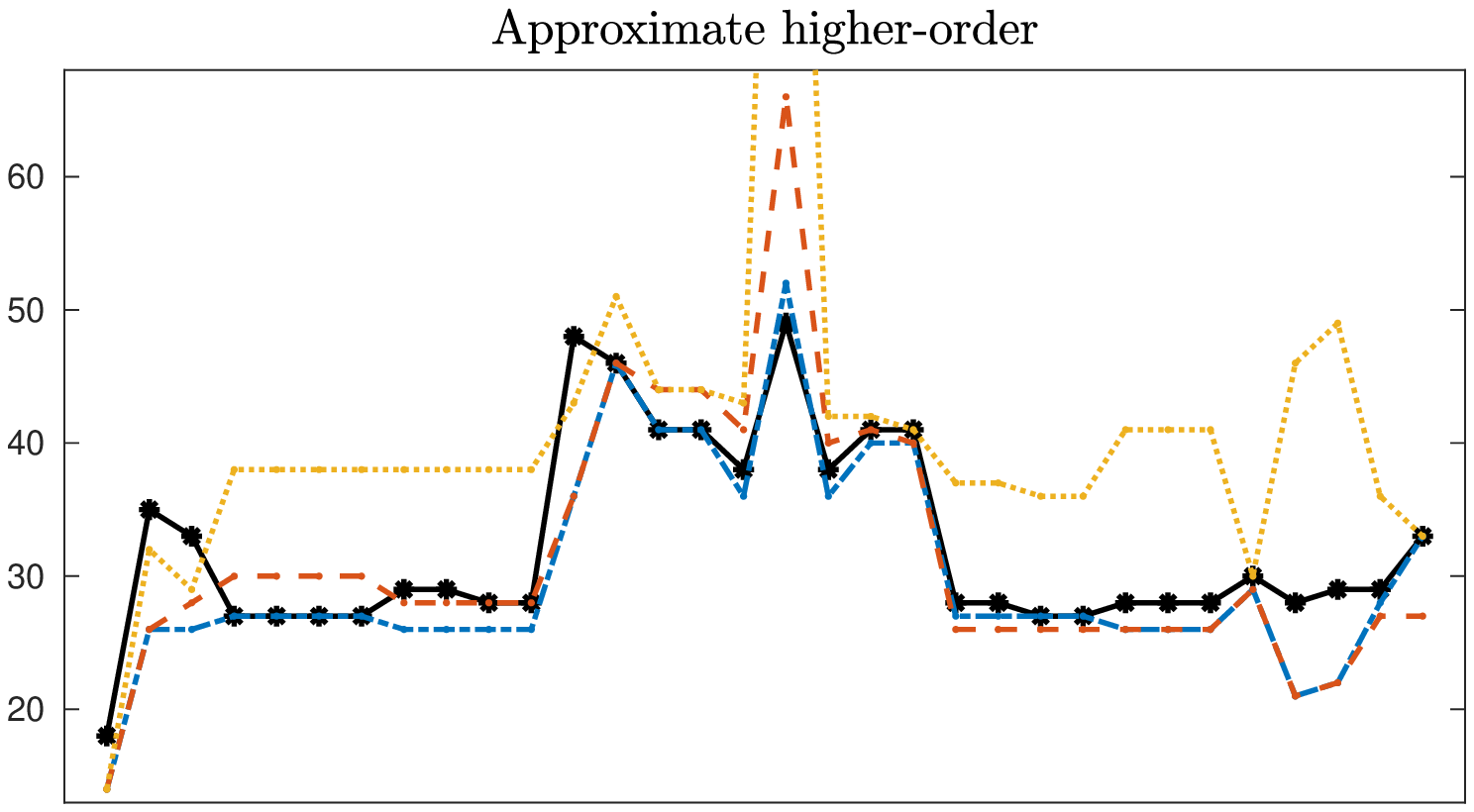} 
\end{minipage}
\parbox{6.1cm}{
\includegraphics[width=6.3cm]{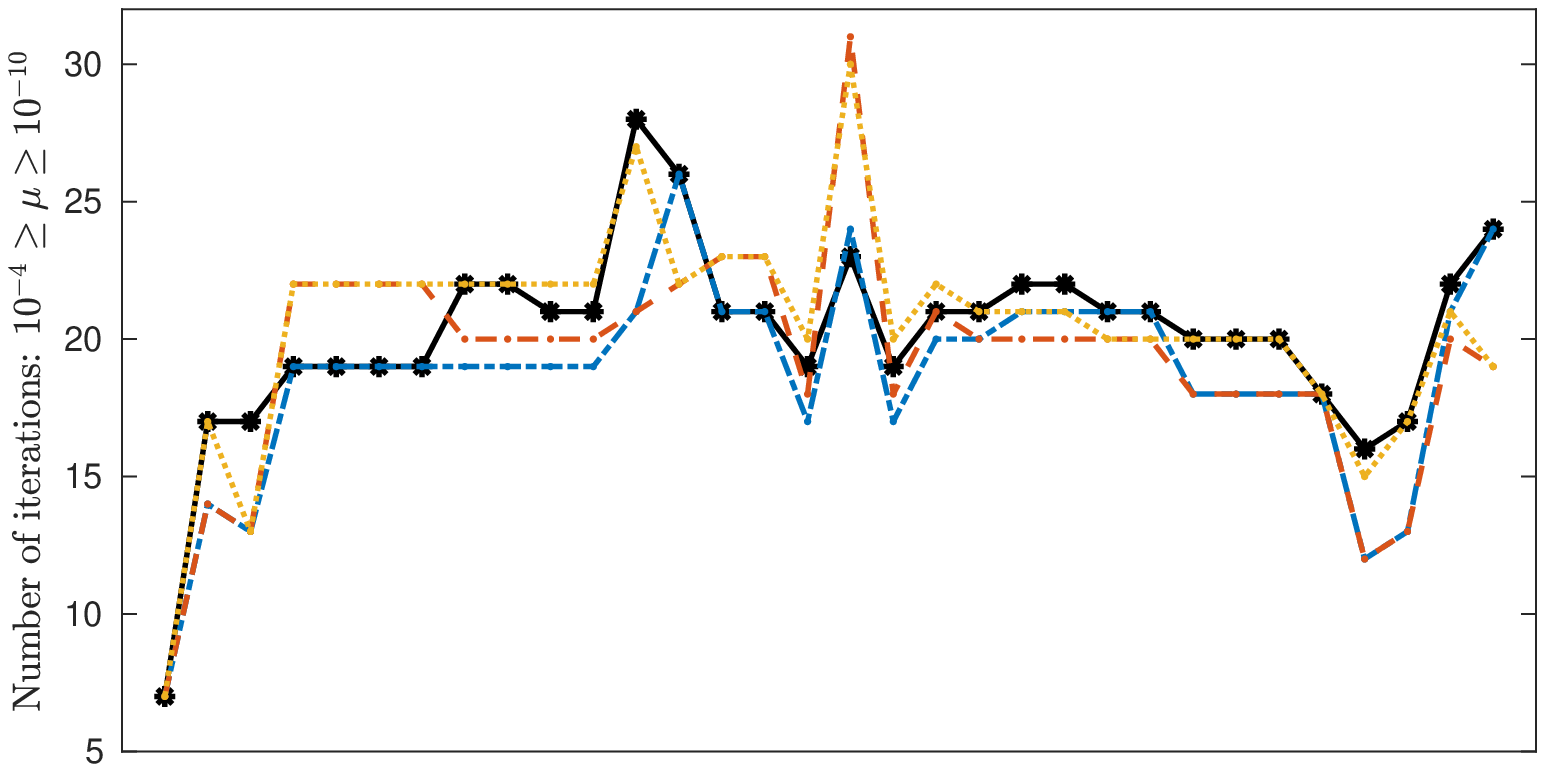}}
\hspace{-5mm}
\begin{minipage}{6.1cm}
\includegraphics[width=6.3cm]{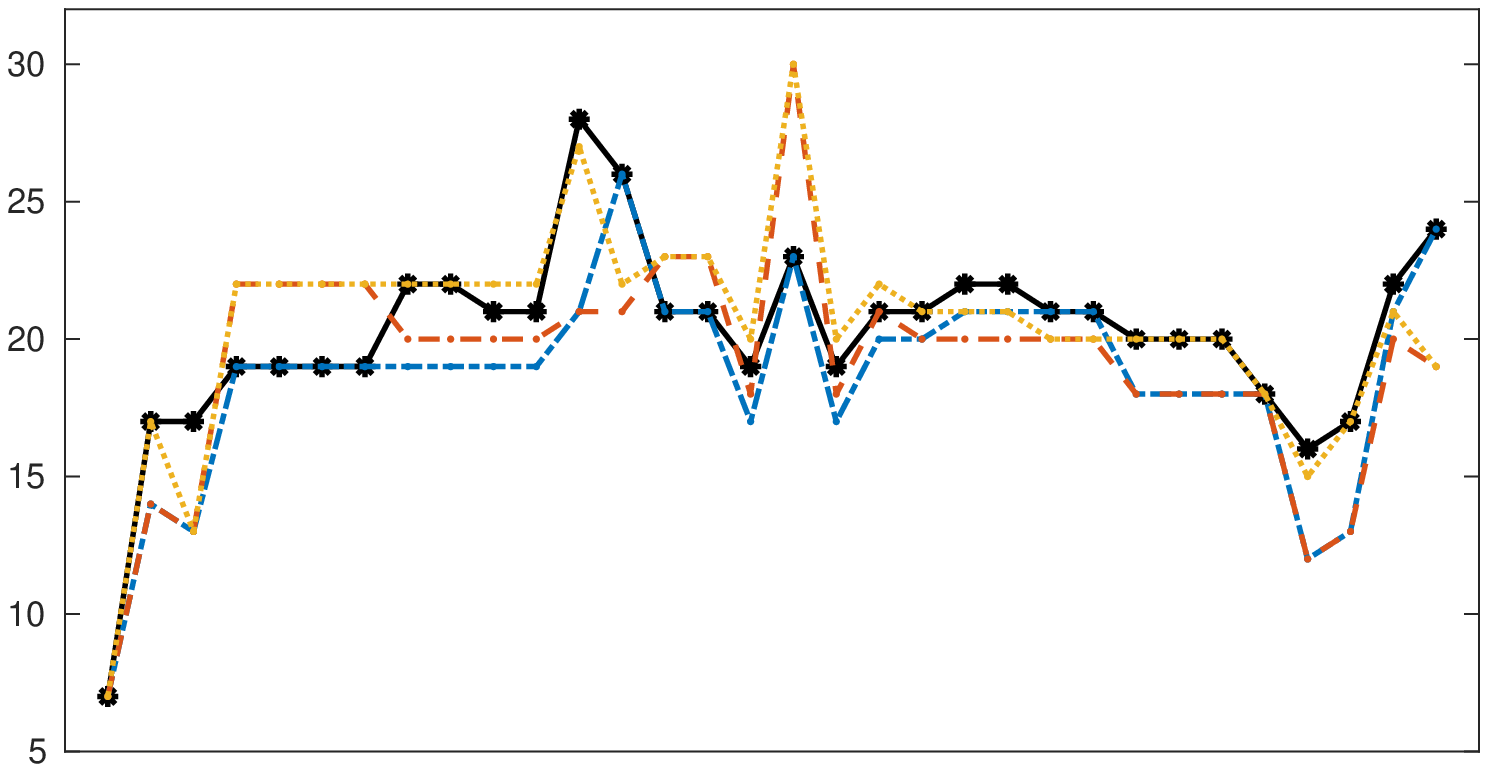} 
\end{minipage}
\parbox{6.1cm}{
\includegraphics[width=6.3cm]{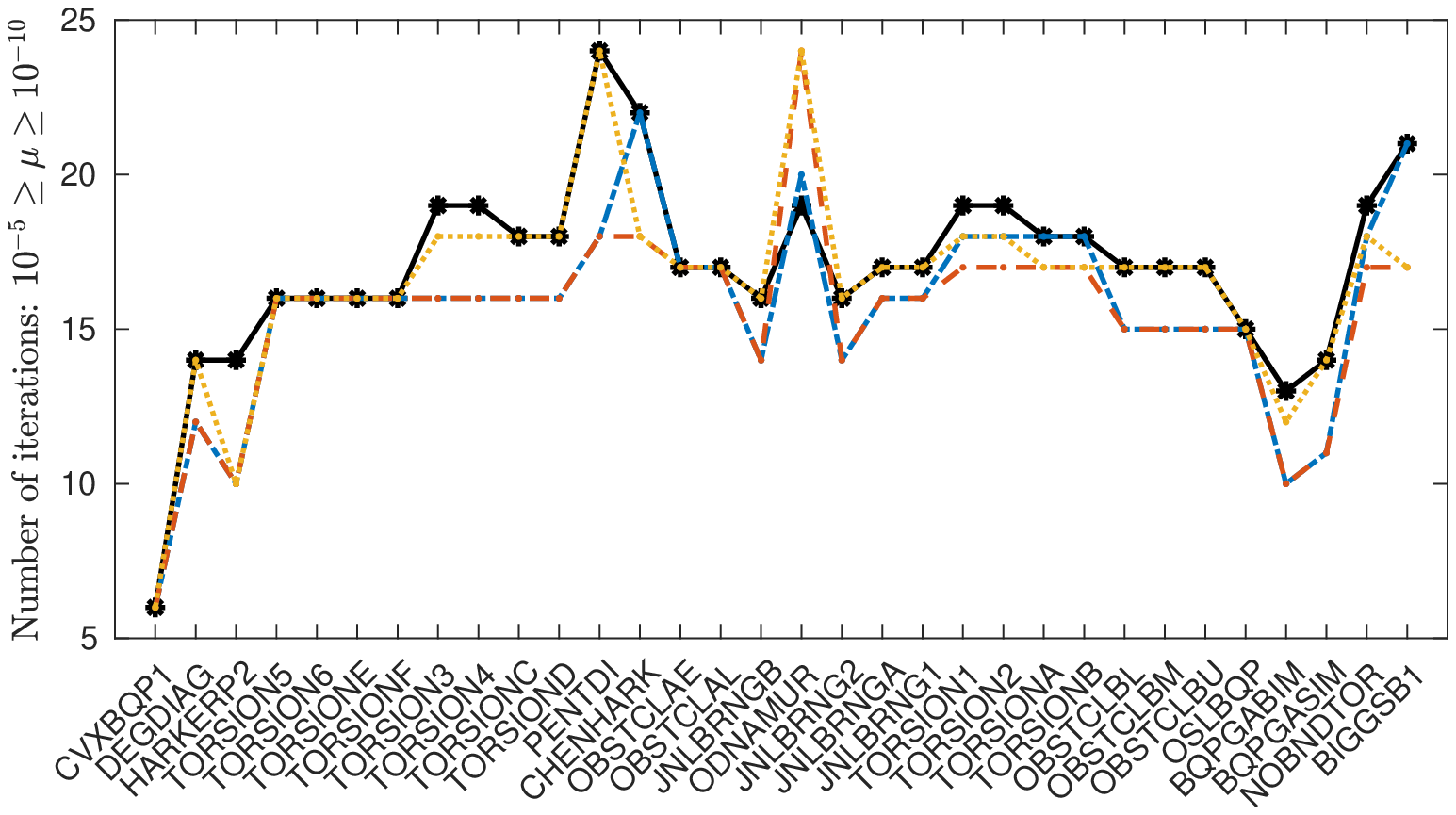}}
\hspace{-5mm} 
\begin{minipage}{6.1cm}
\includegraphics[width=6.3cm]{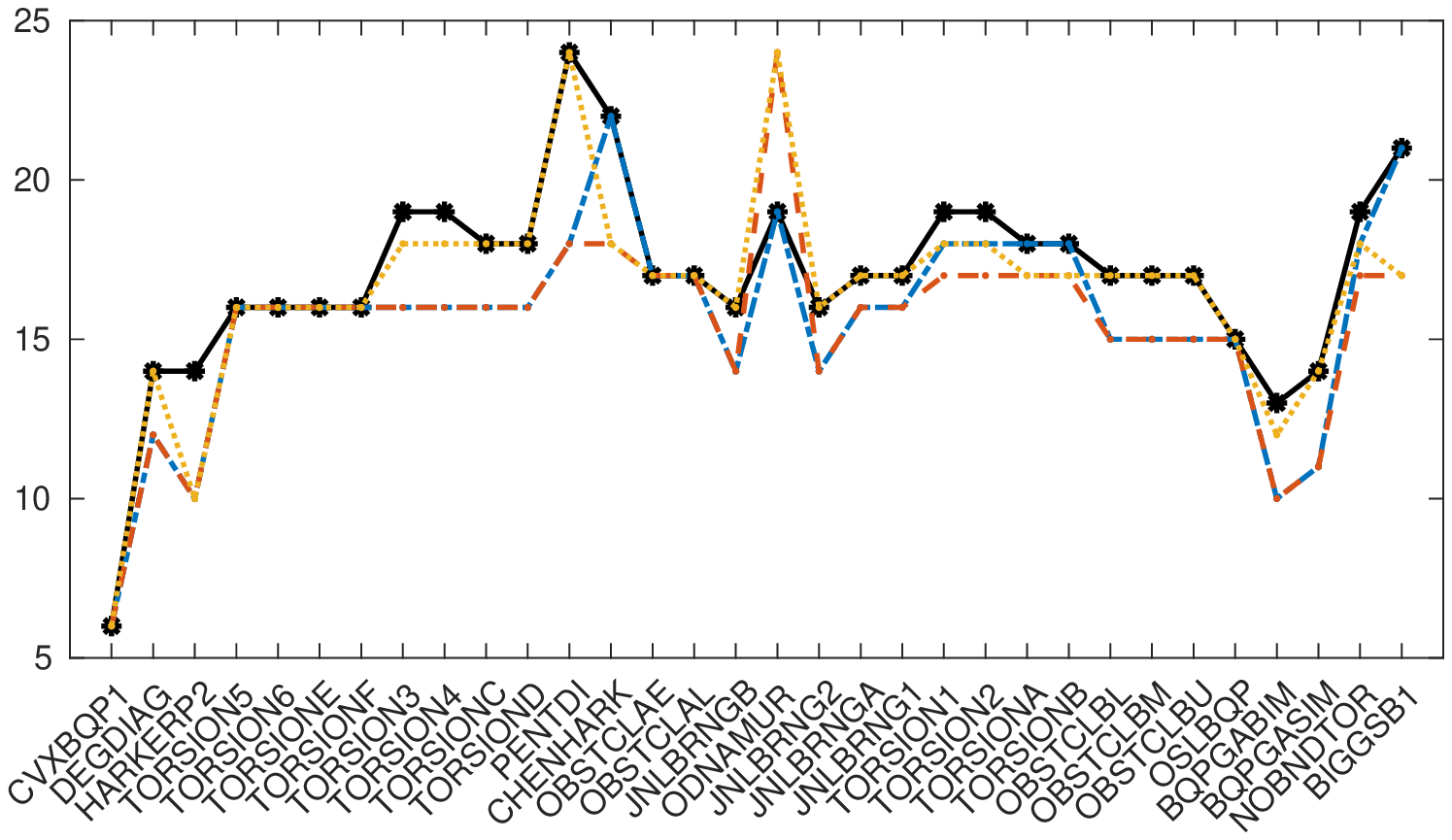} 
\end{minipage}
\caption{Number of iterations required at different intervals of $\mu$ for three versions of the Newton-like approaches and Algorithm~\ref{alg:IPMnewt}, (Newton).}
\label{fig:modNewtonIPMs}
\end{figure}
The total iteration count for $\mu \in [10^1,\> 10^{-10}]$ in
Figure~\ref{fig:modNewtonIPMs} shows that the approximate higher-order approach requires the same, or fewer iterations, compared to the approach with the approximate intermediate step. 
The iteration count for the approaches with the Schur-based approximate is similar to that of Algorithm~\ref{alg:IPMnewt} for this range of $\mu$. Also here, numerical
experiments show indications of three regions. For $\mu$ in the
approximate region $[10^1,10^{-2})$, the versions with the Schur-based
approximate yield a potentially reduced number of iterations. Their performance varies in the region of intermediate sized $\mu$. However,  it can not be discarded that this is an effect of the relatively simple set estimation heuristics. 
On all problems, with the exception of \texttt{ODNAMUR}, in Figure~\ref{fig:modNewtonIPMs} for $\mu \in [10^{-5},\> 10^{-10}]$ all versions of both approaches give an iteration
count less or equal to Algorithm~\ref{alg:IPMnewt}, hence providing potential savings in computation cost. The results may be improved with a flexible set estimation heuristics, e.g., more restrictive
thresholds for intermediate sized $\mu$. However, we chose not to include another layer of detail and instead give the
results for a relatively simple setting to obtain an initial evaluation of the potential performance.

\section{Conclusions} \label{sec:Conc}
In this work we have given approximate solutions to systems of linear equations that arise in interior point methods for bound-constrained optimization; in particular, partial approximate solutions, where the asymptotic component error bounds are in the order of $\mu^2$, and full approximate solutions with asymptotic error bounds in the order of $\mu^2$. Numerical simulations on randomly generated bound-constrained convex quadratic optimization problems, whose minimizers satisfy strict complementarity, have shown that the approximate solutions perform similarly to Newton solutions for sufficiently small $\mu$. Simulations on convex bound-constrained quadratic problems from the CUTEst test collection, whose minimizers typically do not satisfy strict complementarity, has shown that the predicted asymptotic behavior still occurs, however at significantly smaller values of $\mu$.

We have performed numerical simulations in a simple yet more realistic setting. Specifically, in a primal-dual interior-point framework where the active and inactive sets were estimated with basic heuristics as the iterations proceeded. These simulations were done on a selection of CUTEst benchmark problems. The results showed that the behavior roughly varied with three regions determined by the size of $\mu$. The Schur-based approximate solutions showed potential in the region for larger $\mu$, in the region of intermediate sized $\mu$ the performance varied, partly due to difficulties in determining the active and inactive sets. For sufficiently small $\mu$ the approximate solutions showed performance similar to our reference method while solving systems of reduced size. 

Finally we showed numerical results for two Newton-like approaches, which include an approximate intermediate step consisting of partial approximate solutions, on the considered CUTEst benchmark problems. The simulations showed similar characteristics as the previous results and also a potential for reducing the overall iteration count of interior-point methods. 

The results of this work are meant to contribute to the theoretical and numerical understanding for approximate solutions to systems of linear equations that arise in interior-point methods. We hope that the work can lead to further research on approximate solutions and approximate higher-order methods for optimization problems with linear inequality constraints. 

\subsubsection*{Acknowledgments}
  We thank the anonymous referees for many helpful suggestions which
  significantly improved the presentation.
\newpage
\begin{footnotesize}
\section*{Appendix}
\setcounter{section}{1}
\setcounter{table}{0}
\setcounter{equation}{0}
\counterwithin{table}{section}
\counterwithin{equation}{section}
\renewcommand{\thesection}{\Alph{section}}%
\subsection{The general case} \label{app:genCase}
Consider problems on the form of (\ref{eq:NLP}). In this situation the Lagrange multiplier vector at a local minimizer $x^*$ takes the form $\lambda^* = \begin{pmatrix} \lambda^{l*} \\ \lambda^{u*}
\end{pmatrix}$, where $\lambda^{l*}$ and $\lambda^{u*}$ are the multiplier vectors corresponding to lower and upper bounds respectively. The second-order conditions sufficient optimality conditions together with strict complementarity take the form
\begin{subequations}
\begin{align}
\nabla f(x^*) - \lambda^{l*} + \lambda^{u*} & = 0, \label{eq:genCase:OptCond:stationarity} \\
 x^*-l  \geq 0, \quad u-x^* & \geq 0,\label{eq:genCase:OptCond:feasibility} \\
\lambda^{l*} \geq 0, \quad \lambda^{u*} & \geq 0,  \label{eq:genCase:OptCond:nonnegofmultipliers}  \\
 (x^*-l) \cdot \lambda^{l*}  = 0,  \quad (u-x^*) \cdot \lambda^{u*} & = 0,  \label{eq:genCase:OptCond:complementarity}\\
 Z(x^*)^T \nabla^2 f(x^*) Z (x^*) & \succ 0. \\
  x + \lambda^{l*}  > 0, \quad x + \lambda^{u*} & > 0. \label{eq:genCase:OptCond:strictcomplementarity}
\end{align} \label{eq:genCase:OptCond}
\end{subequations}
Similarly as in Section~\ref{sec:Background}, define the function $F_{\mu}:\mathbb{R}^{3n} \rightarrow \mathbb{R}^{3n}$ by 
\begin{equation} \label{eq:genCase:Fmu}
F_{\mu}(x,\lambda) = \begin{bmatrix}
\nabla f(x) - \lambda^l + \lambda^u \\
\Lambdait^l (X-L) e - \mu e \\
\Lambdait^u (U-X) e - \mu e
\end{bmatrix},
\end{equation}
where $L= \textrm{diag} (l)$, $U = \textrm{diag}(u)$, $\Lambdait^l=\textrm{diag}(\lambda^l)$ and $\Lambdait^u=\textrm{diag}(\lambda^u)$. The corresponding Jacobian $F' : \mathbb{R}^{3n} \rightarrow \mathbb{R}^{3n}$ is
\begin{equation} \label{eq:genCase:Jac}
F'(x, \lambda) = \begin{bmatrix}
H & -I & I\\ 
\Lambdait^l & (X-L) & \\ 
-\Lambdait^u & & (U-X)
\end{bmatrix}.
\end{equation}
For the case with upper and lower bounds it is useful to distinguish whether a specific component of $x^*$ is active with respect to an upper or a lower bound. 
\begin{definition} \label{def:genCase:sets}
(Active/inactive sets). For a given $x^*$ such that $l \le x^* \le u $, define the sets
\begin{align*}
& \mathcal{A}_{l}  = \{i \in \{1, \dots, n \} : x^*_i - l_i = 0 \}, 
& \mathcal{I}_{l} = \{1,\dots, n\} \setminus \mathcal{A}_{l}, \> \> \\
& \mathcal{A}_{u} = \{i \in \{1, \dots, n \} : u_i - x^*_i = 0 \},
& \mathcal{I}_{u} = \{1,\dots, n\} \setminus \mathcal{A}_{u} , \\
& \mathcal{A}_x  = \mathcal{A}_{l}  \cup \mathcal{A}_{u}, 
& \mathcal{I}_x  = \{1,\dots, n\} \setminus \mathcal{A}_x. 
\end{align*}
\end{definition}
Throughout the remaining part of the manuscript, Assumption~\ref{ass1} means that the vector $(x^*, \lambda^*)$ satisfies (\ref{eq:genCase:OptCond}), i.e., second-order sufficient optimality conditions and strict complementarity.
Bounds on individual components of the solution $(x, \lambda)$ in the region of asymptotic behavior is given the lemma below. 
\begin{lemma} \label{lemma:genCase:Order}
Under Assumption~\ref{ass1}, let $\mathcal{B}\left((x^*, \lambda^*), \delta \right)$ and $\muhat$ be defined by Lemma~\ref{lemma:background:FpnonsingCont} and Lemma~\ref{lemma:LipcPath} respectively. Then there exists $\mubar$, with
$0 <\mubar \le \muhat$, such that for $0 < \mu \leq \bar\mu$ and $(x,\lambda)$ sufficiently close to $(x^{\mu}, \lambda^{\mu}) \in \mathcal{B}((x^*, \lambda^*), \delta)$ so that $\| F_{\mu} (x,\lambda) \| = \mathcal{O}(\mu)$ it holds that
\begin{equation*}
x_i-l_i = \begin{cases} \mathcal{O}(\mu) & i \in \mathcal{A}_l, \\ 
                    \Theta(1) & i \in \mathcal{I}_l, \end{cases} 
                    \qquad 
\lambda_i^l = \begin{cases} \Theta(1) & i \in \mathcal{A}_{l}, \\ 
                    \mathcal{O}(\mu) & i \in \mathcal{I}_{l}, \end{cases}
\end{equation*}
\begin{equation*}
u_i-x_i = \begin{cases} \mathcal{O}(\mu) & i \in \mathcal{A}_u, \\ 
                    \Theta(1) & i \in \mathcal{I}_u, \end{cases}
\qquad \lambda_i^u = \begin{cases} \Theta(1) & i \in \mathcal{A}_{u}, \\ 
                    \mathcal{O}(\mu) & i \in \mathcal{I}_{u}. \end{cases}
\end{equation*}
\end{lemma}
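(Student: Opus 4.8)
The plan is to mirror the three-step structure of the proof of Lemma~\ref{lemma:Order}, carrying it out now for the four quantities $x_i-l_i$, $\lambda_i^l$, $u_i-x_i$, and $\lambda_i^u$ rather than for the two quantities $x_i$ and $\lambda_i$. First I would record the orders of these quantities at the limit point. Under Assumption~\ref{ass1}, the complementarity relations (\ref{eq:genCase:OptCond:complementarity}) together with strict complementarity (\ref{eq:genCase:OptCond:strictcomplementarity}) give at $(x^*,\lambda^*)$ that $x_i^*-l_i=0$ with $\lambda_i^{l*}=\Theta(1)$ for $i\in\mathcal{A}_l$, while $x_i^*-l_i=\Theta(1)$ (strictly positive by definition of $\mathcal{I}_l$) with $\lambda_i^{l*}=0$ for $i\in\mathcal{I}_l$, and analogously for $u_i-x_i$ and $\lambda_i^u$ with respect to $\mathcal{A}_u$ and $\mathcal{I}_u$. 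Here one uses that $l<u$ forces each component to be active with respect to at most one bound, so the three cases ($i\in\mathcal{A}_l$, $i\in\mathcal{A}_u$, or $i\in\mathcal{I}_l\cap\mathcal{I}_u$) are exhaustive and the constants are all $\Theta(1)$.

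Second, I would transfer these orders from $(x^*,\lambda^*)$ to the barrier trajectory. By Lipschitz continuity of $(x^\mu,\lambda^\mu)$ from Lemma~\ref{lemma:LipcPath}, we have $(x^\mu,\lambda^\mu)\in\mathcal{B}\left((x^*,\lambda^*),L_{F'}\mu\right)$, so each component of $(x^\mu,\lambda^\mu)$ differs from the corresponding component of $(x^*,\lambda^*)$ by at most $\mathcal{O}(\mu)$. Consequently there exists $\mubar_1$, with $0<\mubar_1\le\muhat$, such that for $0<\mu\le\mubar_1$ the active distances $x_i^\mu-l_i$ ($i\in\mathcal{A}_l$) and $u_i-x_i^\mu$ ($i\in\mathcal{A}_u$) and the inactive multipliers $\lambda_i^{l,\mu}$ ($i\in\mathcal{I}_l$), $\lambda_i^{u,\mu}$ ($i\in\mathcal{I}_u$) are all $\mathcal{O}(\mu)$, while the remaining quantities stay bounded away from zero and hence are $\Theta(1)$.

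Third, I would extend the conclusion from $(x^\mu,\lambda^\mu)$ to the nearby vector $(x,\lambda)$. The hypothesis $\|F_\mu(x,\lambda)\|=\mathcal{O}(\mu)$ provides a constant $C_1>0$ with $\|F_\mu(x,\lambda)\|\le C_1\mu$, and Lemma~\ref{lemma:FmuBound} then yields $\left\|(x,\lambda)-(x^\mu,\lambda^\mu)\right\|\le\frac{1}{C_5}\|F_\mu(x,\lambda)\|\le\frac{C_1}{C_5}\mu$, so that $(x,\lambda)\in\mathcal{B}\left((x^\mu,\lambda^\mu),\frac{C_1}{C_5}\mu\right)$. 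Each component of $(x,\lambda)$ therefore differs from the corresponding component of the trajectory by $\mathcal{O}(\mu)$, which preserves the $\mathcal{O}(\mu)$ and $\Theta(1)$ orders for sufficiently small $\mu$; this produces a $\mubar_2$, and the result follows for $0<\mu\le\mubar$ with $\mubar=\min\{\mubar_1,\mubar_2\}$.

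The argument involves no genuine difficulty beyond the one-sided case; the main thing to get right is the bookkeeping of which bound (if any) is active for each index, so that the four order statements are assigned to the correct index sets. A minor point worth addressing is the treatment of infinite bounds: if $l_i=-\infty$ (respectively $u_i=\infty$) the corresponding constraint and multiplier are simply absent, the index lies permanently in $\mathcal{I}_l$ (respectively $\mathcal{I}_u$), and the associated order estimate is vacuous, so the statement is understood to range only over the finite bounds. With this convention the componentwise repetition of the one-sided proof goes through verbatim.
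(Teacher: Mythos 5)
Your proposal is correct and follows exactly the route the paper intends: the paper states Lemma~\ref{lemma:genCase:Order} without proof as the analogue of Lemma~\ref{lemma:Order}, and your three-step argument (orders at $(x^*,\lambda^*)$ from complementarity and strict complementarity, transfer to $(x^\mu,\lambda^\mu)$ via Lipschitz continuity from Lemma~\ref{lemma:LipcPath}, then to $(x,\lambda)$ via Lemma~\ref{lemma:FmuBound}) is precisely the paper's proof of Lemma~\ref{lemma:Order} carried over componentwise to the sets $\mathcal{A}_l$, $\mathcal{I}_l$, $\mathcal{A}_u$, $\mathcal{I}_u$. Your added bookkeeping --- that $l<u$ makes $\mathcal{A}_l$ and $\mathcal{A}_u$ disjoint, and that infinite bounds are treated as permanently inactive with vacuous estimates --- is a sensible and correct refinement of details the paper leaves implicit.
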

\subsubsection*{Partial approximate solutions }
In this section we give results analogous to those given in Section~\ref{subsec:partialApp} together with some complementary remarks.  With $F'(x,\lambda)$ and $F_{\mu}(x,\lambda)$ defined as in (\ref{eq:genCase:Jac}) and (\ref{eq:genCase:Fmu}) respectively the Schur complement of $(X-L)$ and $(U-X)$ in (\ref{eq:PDsyst}) is
\begin{align} \label{eq:genCase:schurComplOfPDsyst}
\left( H + (X-L)\inv \Lambdait^l + (U-X)\inv \Lambdait^u \right) \Delta x^N & = - \nabla f(x) \nonumber \\
& \quad + \mu \left[ (X-L)\inv - (U-X)\inv \right] e.
\end{align}
For $i \in \mathcal{A}_x$ either $(u_i-x_i) \to 0$ or $(l_i-x_i) \to 0$ as $\mu \to 0$. In consequence, approximates of $\Delta x_i^N$, $i \in \mathcal{A}_x$, can be obtained from the Schur complement (\ref{eq:genCase:schurComplOfPDsyst}). These approximate solutions are given below in Proposition~\ref{prop:genCase:schurBased} which is the result analogous to Proposition~\ref{prop:schurBased}.
\begin{proposition}\label{prop:genCase:schurBased}
Under Assumption~\ref{ass1}, let $\mathcal{B}\left((x^*, \lambda^*),
  \delta\right)$ and $\muhat$ be defined by
Lemma~\ref{lemma:background:FpnonsingCont} and
Lemma~\ref{lemma:LipcPath} respectively. For $(x,\lambda) \in \mathcal{B}((x^*, \lambda^*), \delta)$, let $(\Delta x^N, \Delta \lambda^N)$ be the solution of (\ref{eq:PDsyst}) with $\mu^+ = \sigma \mu$, where $0< \sigma < 1$. If the search direction components are defined as
\begin{equation} \label{eq:prop:genCase:schurBased:dx}
\Delta x_i^S  = \frac{-1}{ \left[ \nabla^2 f(x)\right]_{ii} + \frac{\lambda_i^l}{x_i - l_i} + \frac{\lambda_i^u}{u_i-x_i}} \left( \left[ \nabla f(x) \right]_i  - \mu^+ \left[ \frac{1}{x_i -l_i} - \frac{1}{u_i- x_i}\right]  \right),
\end{equation}
for $i=1,\dots, n$, then
\begin{equation*}
\Delta x_i^S - \Delta x_i^N =  \frac{1}{ \left[ \nabla^2 f(x)\right]_{ii} + \frac{\lambda_i^l}{x_i - l_i} + \frac{\lambda_i^u}{u_i-x_i}}   \sum_{i \neq j} \left[ \nabla^2 f(x)\right]_{ij} \Delta x_j^N, \qquad i=1,\dots, n.
\end{equation*}
Assume in addition that $0 < \mu \le \muhat$ and $(x,\lambda)$ is sufficiently close to $(x^{\mu}, \lambda^{\mu})\in \mathcal{B}\left((x^*, \lambda^*), \delta\right)$ such that $\| F_{\mu} (x,\lambda) \| = \mathcal{O}(\mu)$. Then there exists $\mubar$, with
$0 <\mubar \le \muhat$, such that for $0 < \mu \leq \mubar$ it holds that
\begin{equation*}
\frac{1}{\left[ \nabla^2 f(x)\right]_{ii} + \frac{\lambda_i^l}{x_i - l_i} + \frac{\lambda_i^u}{u_i-x_i}} = \begin{cases} \mathcal{O}(\mu) & i \in \mathcal{A}_x, \\ \Theta(1) & i \in \mathcal{I}_x, \end{cases}
\end{equation*}
and
\begin{align*}
&| \Delta x_i^S - \Delta x^N_i | = \mathcal{O}(\mu^2),  &i \in \mathcal{A}_x.
\end{align*}
\end{proposition}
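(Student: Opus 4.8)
The plan is to mirror the proof of Proposition~\ref{prop:schurBased}, the only structural change being that the scalar denominator is replaced by the combined barrier denominator $D_i := \left[\nabla^2 f(x)\right]_{ii} + \frac{\lambda_i^l}{x_i-l_i} + \frac{\lambda_i^u}{u_i-x_i}$. First I would write out the $i$'th row of the Schur complement (\ref{eq:genCase:schurComplOfPDsyst}), namely
\[
\sum_{j \neq i} \left[\nabla^2 f(x)\right]_{ij} \Delta x_j^N + D_i\, \Delta x_i^N = -\left( \left[\nabla f(x)\right]_i - \mu^+\left[\frac{1}{x_i-l_i} - \frac{1}{u_i-x_i}\right]\right).
\]
Provided $D_i \neq 0$, solving for $\Delta x_i^N$ isolates the first term as exactly $\Delta x_i^S$ given by (\ref{eq:prop:genCase:schurBased:dx}); subtracting this expression from (\ref{eq:prop:genCase:schurBased:dx}) then yields the stated error identity. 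This portion is purely algebraic and holds for every index $i$ at which $D_i \neq 0$.

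The substantive step is to pin down the order of $1/D_i$, where the two-sided bounds and strict complementarity enter. Since the iterates are kept strictly feasible with $\lambda>0$, both ratios $\frac{\lambda_i^l}{x_i-l_i}$ and $\frac{\lambda_i^u}{u_i-x_i}$ are strictly positive, so no cancellation can occur and $D_i$ differs from the bounded quantity $[\nabla^2 f(x)]_{ii}$ only by nonnegative terms. For $i\in\mathcal{A}_x$ I would use that $\mathcal{A}_l$ and $\mathcal{A}_u$ are disjoint (as $l<u$), so exactly one ratio is singular: if $i\in\mathcal{A}_l$ then Lemma~\ref{lemma:genCase:Order} gives $\lambda_i^l=\Theta(1)$ and $x_i-l_i=\mathcal{O}(\mu)$, hence $\frac{\lambda_i^l}{x_i-l_i}=\Omega(1/\mu)$, and symmetrically for $i\in\mathcal{A}_u$. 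Consequently $D_i=\Omega(1/\mu)$ and $1/D_i=\mathcal{O}(\mu)$. For $i\in\mathcal{I}_x=\mathcal{I}_l\cap\mathcal{I}_u$ both ratios are $\mathcal{O}(\mu)$ by Lemma~\ref{lemma:genCase:Order}, so $D_i=[\nabla^2 f(x)]_{ii}+\mathcal{O}(\mu)$; here I would invoke Assumption~\ref{ass1} to guarantee that $[\nabla^2 f(x)]_{ii}$ is bounded away from zero, since the inactive diagonal block is positive definite and therefore has strictly positive diagonal entries, giving $D_i=\Theta(1)$ and $1/D_i=\Theta(1)$. Boundedness of $\nabla^2 f$ on $\mathcal{B}((x^*,\lambda^*),\delta)$ supplies the matching upper bounds and, together with $\lambda>0$, the positivity of $D_i$ for $\mu$ sufficiently small; taking $\mubar$ to be the minimum of the threshold from Lemma~\ref{lemma:genCase:Order} and those from these continuity arguments makes the claim uniform.

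Finally, for the error estimate on the active set I would combine the ingredients in the error identity. For $i\in\mathcal{A}_x$ the prefactor is $1/D_i=\mathcal{O}(\mu)$, the off-diagonal entries $[\nabla^2 f(x)]_{ij}$ are bounded by the assumption on $f$, and $\sum_{j\neq i}|\Delta x_j^N|\le\sqrt{n}\,\|(\Delta x^N,\Delta\lambda^N)\|=\mathcal{O}(\mu)$ by Lemma~\ref{lemma:dzBound}. Multiplying these three factors gives $|\Delta x_i^S-\Delta x_i^N|=\mathcal{O}(\mu^2)$, as required. I expect the main obstacle to be precisely the order analysis of $D_i$: one must ensure that the two barrier terms never cancel, which the strict positivity of the multipliers and the iterates secures, and that in the inactive case the bounded Hessian diagonal is genuinely bounded below, which relies on the positive-definiteness part of Assumption~\ref{ass1} rather than on mere boundedness of $f$.
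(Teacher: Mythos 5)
Your proof is correct and follows essentially the same route as the paper's proof of the analogous Proposition~\ref{prop:schurBased} (the appendix states Proposition~\ref{prop:genCase:schurBased} without its own proof): take the $i$'th row of the Schur complement (\ref{eq:genCase:schurComplOfPDsyst}), solve for $\Delta x_i^N$ to get the error identity, and then combine Lemma~\ref{lemma:genCase:Order}, Lemma~\ref{lemma:dzBound} and boundedness of the Hessian on $\mathcal{B}((x^*,\lambda^*),\delta)$ to obtain the order bounds. If anything, your treatment of the $\Theta(1)$ bound for $i \in \mathcal{I}_x$ via positive definiteness of the reduced Hessian under Assumption~\ref{ass1}, and your observation that positivity of the two barrier ratios precludes cancellation for $i \in \mathcal{A}_x$, is more explicit than the corresponding step in the paper's proof of Proposition~\ref{prop:schurBased}, which attributes the denominator bound only to boundedness of $f$.
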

Next we give results corresponding to those in Proposition~\ref{prop:compBased}. As $\mu \to 0$ then  $\lambda_i^l \to 0$ for $i \in \mathcal{I}_{l}$ and $\lambda_i^u \to 0$ for $i \in \mathcal{I}_{u}$. Consequently, approximations based on the complementarity blocks of $F'(x,\lambda) (\Delta x^N, \Delta \lambda^N) = - F_{\mu}(x,\lambda)$  can be formed for $\Delta x_i^N, i \in \mathcal{A}_x$, $\Delta \lambda_i^{l,N}, i \in \mathcal{I}_{l}$ and $\Delta \lambda_i^{u,N}, i \in \mathcal{I}_{u}$.
\begin{proposition} \label{prop:genCase:compBased}
Under Assumption~\ref{ass1}, let $\mathcal{B}\left((x^*, \lambda^*),
  \delta\right)$ and $\muhat$ be defined by
Lemma~\ref{lemma:background:FpnonsingCont} and
Lemma~\ref{lemma:LipcPath} respectively. For $(x,\lambda) \in \mathcal{B}((x^*, \lambda^*), \delta)$, let $(\Delta x^N, \Delta \lambda^N)$ be the solution of (\ref{eq:PDsyst}) with $\mu^+ = \sigma \mu$, where $0< \sigma < 1$. If the search direction components are defined as
\begin{subequations} \label{eq:prop:genCase:compBased:dx}
\begin{align}
& \Delta x_i^C = - (x_i-l_i) + \frac{\mu^+}{\lambda_i^l}, & i \in \mathcal{A}_{l}, \> \> \label{eq:prop:genCase:compBased:dxLambdal} \\
&\Delta x_i^C = (u_i-x_i) - \frac{\mu^+}{\lambda^u_i}, & i \in \mathcal{A}_{u}, \> \label{eq:prop:genCase:compBased:dxLambdau} 
\end{align}
\end{subequations}
\begin{subequations} \label{eq:prop:genCase:compBased:dlambda}
\begin{align}
& \Delta \lambda_i^{l,C} = - \lambda_i^l + \frac{\mu^+}{x_i-l_i}, & i \in \mathcal{I}_{l}, \> \> \> \label{eq:prop:genCase:compBased:dlambdal} \\
& \Delta \lambda^{u,C}_i = - \lambda^u_i + \frac{\mu^+}{u_i-x_i}, & i \in \mathcal{I}_{u}, \> \>  \label{eq:prop:genCase:compBased:dlambdau} 
\end{align}
\end{subequations}
then
\begin{align*} 
& \Delta x_i^C - \Delta x_i^N =  \frac{x_i-l_i}{\lambda_i^l} \Delta \lambda_i^{l,N}, & i \in \mathcal{A}_{l}, \qquad \> \\
& \Delta x_i^C - \Delta x_i^N =  \frac{u_i-x_i}{\lambda_i^u} \Delta \lambda_i^{u,N}, & i \in \mathcal{A}_{u},\qquad  \\ 
& \Delta \lambda_i^{l,C} - \Delta \lambda_i^{l,N} = \frac{\lambda_i^l}{x_i-l_i} \Delta x_i^N, & i \in \mathcal{I}_{l}, \qquad \>\> \\
& \Delta \lambda_i^{u,C} - \Delta \lambda_i^{u,N} = -\frac{\lambda^u_i}{u_i-x_i} \Delta x_i^N, & i \in \mathcal{I}_{u}. \qquad \> 
\end{align*}
Assume in addition that $0 < \mu \le \muhat$ and $(x,\lambda)$ is sufficiently close to $(x^{\mu}, \lambda^{\mu})\in \mathcal{B}\left((x^*, \lambda^*), \delta\right)$ such that $\| F_{\mu} (x,\lambda) \| = \mathcal{O}(\mu)$. Then there exists $\mubar$, with
$0 <\mubar \le \muhat$, such that for $0 < \mu \leq \mubar$ it holds that
\begin{subequations} \label{eq:prop:genCase:compBased:Ordo}
\begin{align*} 
| \Delta x_i^C - \Delta x_i^N | & = \mathcal{O}(\mu^2), & i \in\mathcal{A}_x, \\
| \Delta \lambda_i^{l,C} - \Delta \lambda_i^{l,N} | & = \mathcal{O}(\mu^2), & i \in\mathcal{I}_{l}, \> \>  \\  
| \Delta \lambda_i^{u,C} - \Delta \lambda_i^{u,N} | & =  \mathcal{O}(\mu^2), & i \in\mathcal{I}_{u}. \> 
\end{align*} 
\end{subequations}
\end{proposition}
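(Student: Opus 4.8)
The plan is to follow the template of the proof of Proposition~\ref{prop:compBased} verbatim, now working with the two complementarity blocks of the general-case Newton system $F'(x,\lambda)(\Delta x^N,\Delta\lambda^N)=-F_{\mu^+}(x,\lambda)$ with $F'$ and $F_\mu$ given by (\ref{eq:genCase:Jac}) and (\ref{eq:genCase:Fmu}). First I would write out the second block (the lower-bound complementarity rows) and the third block (the upper-bound rows) component-wise. Using the entries $[\Lambdait^l,(X-L),0]$ and $[-\Lambdait^u,0,(U-X)]$ of $F'$, the $i$-th rows read
\begin{subequations}
\begin{align*}
\lambda_i^l\,\Delta x_i^N + (x_i-l_i)\,\Delta\lambda_i^{l,N} &= -\lambda_i^l(x_i-l_i)+\mu^+, \\
-\lambda_i^u\,\Delta x_i^N + (u_i-x_i)\,\Delta\lambda_i^{u,N} &= -\lambda_i^u(u_i-x_i)+\mu^+.
\end{align*}
\end{subequations}
Since $x_i-l_i>0$, $u_i-x_i>0$, $\lambda_i^l>0$ and $\lambda_i^u>0$ on the neighborhood under consideration, each row can be solved for either unknown.

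Next I would solve the first row for $\Delta x_i^N$ and for $\Delta\lambda_i^{l,N}$, and the second row for $\Delta x_i^N$ and for $\Delta\lambda_i^{u,N}$. Subtracting the resulting exact expressions from the proposed approximates (\ref{eq:prop:genCase:compBased:dx})--(\ref{eq:prop:genCase:compBased:dlambda}) then yields the four stated error identities, exactly as subtraction of (\ref{eq:prop:compBased:proof:eq1})--(\ref{eq:prop:compBased:proof:eq2}) produced (\ref{eq:prop:compBased:ErrBoth}) in the one-sided case. The only point requiring care here is the sign bookkeeping in the upper-bound block, where the $-\Lambdait^u$ entry of the Jacobian flips signs relative to the lower-bound block; this is what produces the factor $\lambda_i^u/(u_i-x_i)$ with its particular sign in the $\mathcal{I}_u$ error and the $(u_i-x_i)/\lambda_i^u$ factor in the $\mathcal{A}_u$ error.

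Finally, to obtain the $\mathcal{O}(\mu^2)$ bounds I would invoke Lemma~\ref{lemma:genCase:Order} together with the general-case analog of Lemma~\ref{lemma:dzBound}. The norm bound $\|(\Delta x^N,\Delta\lambda^N)\|=\mathcal{O}(\mu)$ carries over to this setting because its proof in Lemma~\ref{lemma:dzBound} uses only the framework results (Lemma~\ref{lemma:background:FpnonsingCont}, Lemma~\ref{lemma:LipcPath}, Lemma~\ref{lemma:FmuBound}), each of which holds for the redefined $F_\mu$, $F'$; in particular each component $\Delta x_i^N$ and each $\Delta\lambda_i^{l,N}$, $\Delta\lambda_i^{u,N}$ is $\mathcal{O}(\mu)$. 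Then for $i\in\mathcal{A}_l$ the factor $(x_i-l_i)/\lambda_i^l$ is $\mathcal{O}(\mu)/\Theta(1)=\mathcal{O}(\mu)$ by Lemma~\ref{lemma:genCase:Order}, so the product with $\Delta\lambda_i^{l,N}=\mathcal{O}(\mu)$ is $\mathcal{O}(\mu^2)$; the $\mathcal{A}_u$ case is identical with $(u_i-x_i)/\lambda_i^u$. For $i\in\mathcal{I}_l$ the factor $\lambda_i^l/(x_i-l_i)$ is $\mathcal{O}(\mu)/\Theta(1)$, and multiplying by $\Delta x_i^N=\mathcal{O}(\mu)$ again gives $\mathcal{O}(\mu^2)$, with the $\mathcal{I}_u$ case handled by $\lambda_i^u/(u_i-x_i)$. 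Choosing $\mubar$ as the minimum of the thresholds furnished by Lemma~\ref{lemma:genCase:Order} and the norm bound completes the argument.

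I do not expect a genuine obstacle: the computation is mechanical and entirely parallel to Proposition~\ref{prop:compBased}. The only things demanding attention are the sign conventions in the upper-bound block and the correct matching of each ratio to the appropriate index set ($\mathcal{A}_l,\mathcal{A}_u$ for the $x$-errors and $\mathcal{I}_l,\mathcal{I}_u$ for the $\lambda$-errors), so that Lemma~\ref{lemma:genCase:Order} supplies the $\mathcal{O}(\mu)$ factor in the numerator and a $\Theta(1)$ factor in the denominator in every case.
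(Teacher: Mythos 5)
Your proposal is correct and is precisely the route the paper intends: the appendix offers no separate proof for this proposition, which is established exactly as Proposition~\ref{prop:compBased} --- write each complementarity row of the general-case Newton system, solve it for the relevant unknown, subtract from the proposed approximate to obtain the error identities, and bound the resulting products using Lemma~\ref{lemma:genCase:Order} together with the general-case version of Lemma~\ref{lemma:dzBound}, whose proof (as you correctly observe) carries over unchanged since it relies only on the framework lemmas. One detail your sign bookkeeping will surface if carried out in full: for $i \in \mathcal{A}_u$ the derivation gives $\Delta x_i^C - \Delta x_i^N = -\frac{u_i-x_i}{\lambda_i^u}\,\Delta\lambda_i^{u,N}$, so the plus sign in the corresponding displayed identity of the proposition is a typo --- harmless for the $\mathcal{O}(\mu^2)$ conclusions, which involve only absolute values.
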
 
Finally we give the general result for the approximate intermediate step, i.e., for the case with lower and upper bounds.
\begin{proposition} \label{prop:genCase:Astep}
Under Assumption~\ref{ass1}, let $\mathcal{B}\left((x^*, \lambda^*),
  \delta\right)$ and $\muhat$ be defined by
Lemma~\ref{lemma:background:FpnonsingCont} and
Lemma~\ref{lemma:LipcPath} respectively. For $(x,\lambda) \in \mathcal{B}((x^*, \lambda^*), \delta)$, define $(x_+^{N},\lambda_+^{N})=(x,\lambda)+ (\Delta x^N,\Delta \lambda^N)$ where $(\Delta x^N, \Delta \lambda^N)$ is the solution of (\ref{eq:PDsyst}) with $\mu^+ = \sigma \mu$, where $0< \sigma < 1$. Moreover, let $(x_+, \lambda_+) = (x,\lambda) + (\Delta x, \Delta \lambda)$ where 
\begin{equation} \label{eq:prop:genCase:partialAstep}
\Delta x_i = \begin{cases}(\ref{eq:prop:genCase:schurBased:dx}) \mbox{ or }  (\ref{eq:prop:genCase:compBased:dxLambdal}) & i \in \mathcal{A}_l, \\ 
(\ref{eq:prop:genCase:schurBased:dx}) \mbox{ or }  (\ref{eq:prop:genCase:compBased:dxLambdau}) & i \in \mathcal{A}_u, \\
0 & i \in \mathcal{I}_x,  \end{cases} \qquad \Delta \lambda_i = \begin{cases} 0 & i \in \mathcal{A}_x, \\
(\ref{eq:prop:genCase:compBased:dlambdal}) & i \in \mathcal{I}_l,\\
(\ref{eq:prop:genCase:compBased:dlambdau}) & i \in \mathcal{I}_u. \end{cases} 
\end{equation}
Assume that $0 < \mu \le \muhat$ and $(x,\lambda)$ is sufficiently close to $(x^{\mu}, \lambda^{\mu})\in \mathcal{B}\left((x^*, \lambda^*), \delta\right)$ such that $\| F_{\mu} (x,\lambda) \| = \mathcal{O}(\mu)$ and $\| (\Delta x_\mathcal{A}^N, \Delta \lambda_\mathcal{I}^N) \|
=\Omega ( \mu^\gamma )$ for $\gamma < 2$. Then there exists $\mubar$, with
$0 <\mubar \le \muhat$, such that for $0 < \mu \leq \mubar$ it holds that
\begin{equation*}
\| (x_+^N,\lambda_+^N) - (x_+,\lambda_+)  \|  \le \| (x_+^N,\lambda_+^N) - (x,\lambda)  \|.
\end{equation*}
\end{proposition}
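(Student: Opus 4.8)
The plan is to follow the proof of Proposition~\ref{prop:Astep} essentially line for line, adapting only the index sets to the two-sided setting. First I would expand the two squared distances and exploit the zero pattern of the partial step. Since $\Delta x_i = 0$ for $i \in \mathcal{I}_x$ and $\Delta \lambda_i = 0$ for $i \in \mathcal{A}_x$, the inactive-$x$ and active-$\lambda$ contributions are identical in $(x_+^N,\lambda_+^N)-(x_+,\lambda_+)$ and in $(x_+^N,\lambda_+^N)-(x,\lambda)$ and therefore cancel. This collapses $\| (x_+^N,\lambda_+^N) - (x_+,\lambda_+) \|^2 - \| (x_+^N,\lambda_+^N) - (x,\lambda)\|^2$ to
\[
\| (\Delta x_{\mathcal{A}_x}^N - \Delta x_{\mathcal{A}_x},\, \Delta \lambda_{\mathcal{I}}^N - \Delta \lambda_{\mathcal{I}}) \|^2 - \| (\Delta x_{\mathcal{A}_x}^N,\, \Delta \lambda_{\mathcal{I}}^N) \|^2,
\]
where $\Delta \lambda_{\mathcal{I}}$ gathers the stepped multipliers $\Delta \lambda_i^l$, $i \in \mathcal{I}_l$, and $\Delta \lambda_i^u$, $i \in \mathcal{I}_u$.

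Next I would bound the first term via Proposition~\ref{prop:genCase:schurBased} and Proposition~\ref{prop:genCase:compBased}. These yield $|\Delta x_i - \Delta x_i^N| = \mathcal{O}(\mu^2)$ for $i \in \mathcal{A}_x$ regardless of whether the Schur- or complementarity-based expression is used, together with $|\Delta \lambda_i^{l,C} - \Delta \lambda_i^{l,N}| = \mathcal{O}(\mu^2)$ for $i \in \mathcal{I}_l$ and $|\Delta \lambda_i^{u,C} - \Delta \lambda_i^{u,N}| = \mathcal{O}(\mu^2)$ for $i \in \mathcal{I}_u$. Summing the componentwise bounds produces a constant $C_7>0$ with $\| (\Delta x_{\mathcal{A}_x}^N - \Delta x_{\mathcal{A}_x}, \Delta \lambda_{\mathcal{I}}^N - \Delta \lambda_{\mathcal{I}})\| \le C_7 \mu^2$ below the smaller of the two thresholds. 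The subtracted term is controlled from below by the hypothesis $\| (\Delta x_\mathcal{A}^N, \Delta \lambda_\mathcal{I}^N)\| = \Omega(\mu^\gamma)$, giving $C_8>0$ with $\| (\Delta x_{\mathcal{A}_x}^N, \Delta \lambda_{\mathcal{I}}^N)\| \ge C_8 \mu^\gamma$. Hence the displayed difference is at most $C_7^2\mu^4 - C_8^2\mu^{2\gamma}$, and since $\gamma < 2$ implies $2\gamma < 4$ this is non-positive for all $\mu \le (C_8/C_7)^{1/(2-\gamma)}$; taking $\mubar$ as the minimum of this value and the two propositions' thresholds closes the argument.

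The main obstacle I anticipate is organizational rather than analytical: matching the index sets correctly. One must check that under strict complementarity and $l<u$ every index lies in exactly one of $\mathcal{A}_l$, $\mathcal{A}_u$, $\mathcal{I}_x$, so that the multipliers left unstepped are exactly the ``active'' ones, namely $\lambda_i^l$ for $i\in\mathcal{A}_l$ and $\lambda_i^u$ for $i\in\mathcal{A}_u$ --- for which no $\mathcal{O}(\mu^2)$ approximation is on offer --- while the complementarity updates (\ref{eq:prop:genCase:compBased:dlambdal}) and (\ref{eq:prop:genCase:compBased:dlambdau}) act only on the genuinely small multipliers $\lambda_i^l$, $i\in\mathcal{I}_l$, and $\lambda_i^u$, $i\in\mathcal{I}_u$. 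Once this correspondence is verified, both the cancellation in the opening step and the $\mathcal{O}(\mu^2)$-against-$\Omega(\mu^\gamma)$ comparison transfer verbatim from Proposition~\ref{prop:Astep}.
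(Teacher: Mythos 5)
Your proposal is correct and is essentially the paper's own argument: the appendix provides no separate proof for Proposition~\ref{prop:genCase:Astep}, relying instead on the proof of Proposition~\ref{prop:Astep}, which you reproduce faithfully --- the cancellation of the unstepped components via the zero pattern, the $\mathcal{O}(\mu^2)$ component error bounds from Proposition~\ref{prop:genCase:schurBased} and Proposition~\ref{prop:genCase:compBased}, and the comparison $C_7^2\mu^4 - C_8^2\mu^{2\gamma} \le 0$ for $\mu \le (C_8/C_7)^{1/(2-\gamma)}$ with $\mubar$ taken as the minimum of the relevant thresholds. Your bookkeeping check --- that $l<u$ makes $\mathcal{A}_l$, $\mathcal{A}_u$, $\mathcal{I}_x$ a partition of the indices, so the unstepped multipliers are exactly the active ones $\lambda^l_i$, $i\in\mathcal{A}_l$, and $\lambda^u_i$, $i\in\mathcal{A}_u$, while the complementarity updates hit only the inactive ones --- is precisely the adaptation the paper leaves implicit.
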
 

\subsubsection*{Full approximate solutions}
In this section we give results analogous to those given in Section~\ref{subsec:fullApp} together with some complementary remarks. 
Note that $\mathcal{I}_x \cap \mathcal{A}_{l} = \emptyset$ and $\mathcal{I}_x \cap \mathcal{A}_{u}= \emptyset$.  By partitioning \\$(\Delta x^N, \Delta \lambda^N) = (\Delta x_{\mathcal{A}_x}^N, \Delta x_{\mathcal{I}_x}^N, \Delta \lambda^{l,N}_{\mathcal{A}_{l}}, \Delta \lambda^{l,N}_{\mathcal{I}_{l}}, \Delta \lambda^{u,N}_{\mathcal{A}_{u}}, \Delta \lambda^{u,N}_{\mathcal{I}_{u}})$, (\ref{eq:PDsyst}) can be written as
\begingroup\makeatletter\def\f@size{7}\check@mathfonts
\def\maketag@@@#1{\hbox{\m@th\footnotesize\normalfont#1}}%
\begin{align}\label{eq:genCase:PDsyst_partitioned}
&\begin{bmatrix}
H_{\mathcal{A}_x \mathcal{A}_x}  & H_{\mathcal{A}_x \mathcal{I}_x}  & -I_{\mathcal{A}_x \mathcal{A}_{l}} & -I_{\mathcal{A}_x \mathcal{I}_{l}} & I_{\mathcal{A}_x \mathcal{A}_{u}} & I_{\mathcal{A}_x \mathcal{I}_{u}}  \\
H_{\mathcal{I}_x \mathcal{A}_x} & H_{\mathcal{I}_x \mathcal{I}_x}  &  & -I_{\mathcal{I}_x \mathcal{I}_{l}} &  & I_{\mathcal{I}_x \mathcal{I}_{u}} \\
\Lambdait^l_{\mathcal{A}_{l} \mathcal{A}_x} &  & (X-L)_{\mathcal{A}_{l} \mathcal{A}_{l}} &  & & \\
\Lambdait^l_{\mathcal{I}_l \mathcal{A}_x} & \Lambdait^l_{\mathcal{I}_{l} \mathcal{I}_x} & & (X-L)_{\mathcal{I}_{l} \mathcal{I}_{l}} &  & \\
-\Lambdait^u_{\mathcal{A}_{u} \mathcal{A}_x} &  & & & (U-X)_{\mathcal{A}_{u} \mathcal{A}_{u}} & \\
-\Lambdait^u_{\mathcal{I}_{u} \mathcal{A}_x} & -\Lambdait^u_{\mathcal{I}_{u} \mathcal{I}_x} & & &  & (U-X)_{\mathcal{I}_{u} \mathcal{I}_{u}} 
\end{bmatrix} \begin{bmatrix} \Delta x_{\mathcal{A}_x}^N \\ \Delta x_{\mathcal{I}_x}^N \\ \Delta \lambda^{l,N}_{\mathcal{A}_{l}} \\ \Delta \lambda^{l,N}_{\mathcal{I}_{l}} \\ \Delta \lambda^{u,N}_{\mathcal{A}_{u}} \\ \Delta \lambda^{u,N}_{\mathcal{I}_{u}}
\end{bmatrix} = \nonumber \\
 &  -\begin{bmatrix}
\nabla f(x)_{\mathcal{A}_x} - \lambda^l_{\mathcal{A}_x} + \lambda^u_{\mathcal{A}_x} \\
\nabla f(x)_{\mathcal{I}_x} - \lambda^l_{\mathcal{I}_x} + \lambda^u_{\mathcal{I}_x}\\
\Lambdait^l_{\mathcal{A}_{l} \mathcal{A}_{l}} (X-L)_{\mathcal{A}_{l} \mathcal{A}_{l}} e - \mu e \\
\Lambdait^l_{\mathcal{I}_{l} \mathcal{I}_{l}} (X-L)_{\mathcal{I}_{l} \mathcal{I}_{l}} e - \mu e \\
\Lambdait^u_{\mathcal{A}_{u}\mathcal{A}_{u}} (U-X)_{\mathcal{A}_{u} \mathcal{A}_{u}} e - \mu e \\
\Lambdait^u_{\mathcal{I}_{u} \mathcal{I}_{u}} (U-X)_{\mathcal{I}_{u} \mathcal{I}_{u}} e - \mu e
\end{bmatrix},
\end{align}\endgroup
Suppose that an approximate solution of $\Delta x^N_{\mathcal{A}_x}$ is given, e.g., (\ref{eq:prop:genCase:schurBased:dx}) or (\ref{eq:prop:genCase:compBased:dxLambdal}) and (\ref{eq:prop:genCase:compBased:dxLambdau}) of Proposition~\ref{prop:genCase:schurBased} and Proposition~\ref{prop:genCase:compBased} respectively. Insertion of an approximate $\Delta x_{\mathcal{A}_x}$ into (\ref{eq:genCase:PDsyst_partitioned}) yields
\begin{align}\label{eq:genCase:redPDsyst_partitioned}
&\begin{bmatrix}
H_{\mathcal{A}_x \mathcal{I}_x}  & -I_{\mathcal{A}_x \mathcal{A}_{l}} & -I_{\mathcal{A}_x \mathcal{I}_{l}} & I_{\mathcal{A}_x \mathcal{A}_{u}} & I_{\mathcal{A}_x \mathcal{I}_{u}}  \\
H_{\mathcal{I}_x \mathcal{I}_x}  &  & -I_{\mathcal{I}_x \mathcal{I}_{l}} &  & I_{\mathcal{I}_x \mathcal{I}_{u}} \\
& (X-L)_{\mathcal{A}_{l} \mathcal{A}_{l}} &  & & \\
\Lambdait^l_{\mathcal{I}_{l} \mathcal{I}_x} & & (X-L)_{\mathcal{I}_{l} \mathcal{I}_{l}} &  & \\
 & & & (U-X)_{\mathcal{A}_{u} \mathcal{A}_{u}} & \\
-\Lambdait^u_{\mathcal{I}_{u} \mathcal{I}_x} & & &  & (U-X)_{\mathcal{I}_{u} \mathcal{I}_{u}} 
\end{bmatrix} \begin{bmatrix} \Delta x^{ls}_{\mathcal{I}_x} \\ \Delta \lambda^{l,ls}_{\mathcal{A}_{l}} \\ \Delta \lambda^{l,ls}_{\mathcal{I}_{l}} \\ \Delta \lambda^{u,ls}_{\mathcal{A}_{u}} \\ \Delta \lambda^{u,ls}_{\mathcal{I}_{u}}
\end{bmatrix} = \nonumber \\
&  -\begin{bmatrix}
\nabla f(x)_{\mathcal{A}_x} - \lambda^l_{\mathcal{A}_x} + \lambda^u_{\mathcal{A}_x}  + H_{\mathcal{A}_x \mathcal{A}_x}  \Delta x_{\mathcal{A}_x} \\
\nabla f(x)_{\mathcal{I}_x} - \lambda^l_{\mathcal{I}_x} + \lambda^u_{\mathcal{I}_x} + H_{\mathcal{I}_x \mathcal{A}_x}  \Delta x_{\mathcal{A}_x} \\
\Lambdait^l_{\mathcal{A}_{l} \mathcal{A}_{l}} (X-L)_{\mathcal{A}_{l} \mathcal{A}_{l}} e - \mu e  + \Lambdait^l_{\mathcal{A}_{l} \mathcal{A}_x}  \Delta x_{\mathcal{A}_x} \\
\Lambdait^l_{\mathcal{I}_{l} \mathcal{I}_{l}} (X-L)_{\mathcal{I}_{l} \mathcal{I}_{l}} e - \mu e + \Lambdait^l_{\mathcal{I}_{l} \mathcal{A}_x}  \Delta x_{\mathcal{A}_x} \\
\Lambdait^u_{\mathcal{A}_{u}\mathcal{A}_{u}} (U-X)_{\mathcal{A}_{u} \mathcal{A}_{u}} e - \mu e - \Lambdait^u_{\mathcal{A}_{u} \mathcal{A}_x}  \Delta x_{\mathcal{A}_x}\\
\Lambdait^u_{\mathcal{I}_{u} \mathcal{I}_{u}} (U-X)_{\mathcal{I}_{u} \mathcal{I}_{u}} e - \mu e - \Lambdait^u_{\mathcal{I}_{u} \mathcal{A}_x}  \Delta x_{\mathcal{A}_x},
\end{bmatrix},
\end{align}
whose solution is labeled with ``$ls$'' since it will lead to least squares system, similarly as in Section~\ref{subsec:fullApp}.
The second, fourth and sixth block of (\ref{eq:genCase:redPDsyst_partitioned}) provide unique solutions of $\Delta x^{ls}_{\mathcal{I}_x}$, $\Delta \lambda^{l,ls}_{\mathcal{I}_l}$ and $\Delta \lambda^{u,ls}_{\mathcal{I}_u}$ which satisfy
\begin{align} \label{eq:genCase:Redx2syst}
\begin{bmatrix}
H_{\mathcal{I}_x \mathcal{I}_x}  &  -I_{\mathcal{I}_x \mathcal{I}_{l}} & I_{\mathcal{I}_x \mathcal{I}_{u}} \\
\Lambdait^l_{\mathcal{I}_{l} \mathcal{I}_x} & (X-L)_{\mathcal{I}_{l} \mathcal{I}_{l}}  \\
-\Lambdait^u_{\mathcal{I}_{u} \mathcal{I}_x} & & (U-X)_{\mathcal{I}_{u} \mathcal{I}_{u}} 
\end{bmatrix} \begin{bmatrix} \Delta x^{ls}_{\mathcal{I}_x} \\ \Delta \lambda^{l,ls}_{\mathcal{I}_{l}} \\ \Delta \lambda^{u,ls}_{\mathcal{I}_{u}}
\end{bmatrix} =  \nonumber \\
  -\begin{bmatrix}
\nabla f(x)_{\mathcal{I}_x} - \lambda^l_{\mathcal{I}_x} + \lambda^u_{\mathcal{I}_x} + H_{\mathcal{I}_x \mathcal{A}_x}  \Delta x_{\mathcal{A}_x} \\
\Lambdait^l_{\mathcal{I}_{l} \mathcal{I}_{l}} (X-L)_{\mathcal{I}_{l} \mathcal{I}_{l}} e - \mu e + \Lambdait^l_{\mathcal{I}_{l} \mathcal{A}_x}  \Delta x_{\mathcal{A}_x} \\
\Lambdait^u_{\mathcal{I}_{u} \mathcal{I}_{u}} (U-X)_{\mathcal{I}_{u} \mathcal{I}_{u}} e - \mu e - \Lambdait^u_{\mathcal{I}_{u} \mathcal{A}_x}  \Delta x_{\mathcal{A}_x}
\end{bmatrix}.
\end{align}
The solution of (\ref{eq:genCase:Redx2syst}) can be obtained by first solving with the Schur complement of $ (X-L)_{\mathcal{I}_{l} \mathcal{I}_{l}}$ and $(U-X)_{\mathcal{I}_{u} \mathcal{I}_{u}} $
\begin{align}  \label{eq:genCase:Redx2syst_schur}
&\left(H_{\mathcal{I}_x \mathcal{I}_x}  + I_{\mathcal{I}_x \mathcal{I}_{l}} (X-L)_{\mathcal{I}_{l} \mathcal{I}_{l}} \inv \Lambdait^l_{\mathcal{I}_{l} \mathcal{I}_x} +  I_{\mathcal{I}_x \mathcal{I}_{u}} (U-X)_{\mathcal{I}_{u} \mathcal{I}_{u}} \inv    \Lambdait^u_{\mathcal{I}_{u} \mathcal{I}_x} \right) \Delta x^{ls}_\mathcal{I} \nonumber \\
&= -\left( \nabla f(x)_{\mathcal{I}_x} + H_{\mathcal{I}_x \mathcal{A}_x}  \Delta x_{\mathcal{A}_x} \right) + I_{\mathcal{I}_x \mathcal{I}_{l}} (X-L)_{\mathcal{I}_{l} \mathcal{I}_{l}}\inv \left(   \mu e - \Lambdait^l_{\mathcal{I}_{l} \mathcal{A}_x}  \Delta x_{\mathcal{A}_x} \right)  \nonumber \\
& \> \quad - I_{\mathcal{I}_x \mathcal{I}_{u}} (U-X)_{\mathcal{I}_{u} \mathcal{I}_{u}} \inv  \left(   \mu e + \Lambdait^u_{\mathcal{I}_{u} \mathcal{A}_x}  \Delta x_{\mathcal{A}_x} \right), 
\end{align}
 and then 
\begin{subequations} \label{eq:genCase:fullApprox:dlambdaI}
\begin{align}
\Delta \lambda^{l,ls}_{\mathcal{I}_l} & = - \lambda^l_{\mathcal{I}_{l}} + (X-L)_{\mathcal{I}_{l} \mathcal{I}_{l}}\inv \left( \mu e - \Lambdait^l_{\mathcal{I}_{l} \mathcal{A}_x}  \Delta x_{\mathcal{A}_x} - \Lambdait^l_{\mathcal{I}_{l} \mathcal{I}_x} \Delta x^{ls}_{\mathcal{I}_x}  \right), \label{eq:genCase:fullApprox:dlambdalI} \\
\Delta \lambda^{u,ls}_{\mathcal{I}_u} & =  - \lambda^u_{\mathcal{I}_{u}}  + (U-X)_{\mathcal{I}_{u} \mathcal{I}_{u}}\inv\left( \mu e - \Lambdait^u_{\mathcal{I}_{u} \mathcal{A}_x}  \Delta x_{\mathcal{A}_x} + \Lambdait^u_{\mathcal{I}_{u} \mathcal{I}_x} \Delta x^{ls}_{\mathcal{I}_x} \right). \label{eq:genCase:fullApprox:dlambdauI}
\end{align}
\end{subequations}
Note that the matrix of (\ref{eq:genCase:Redx2syst_schur}) is by Assumption~\ref{ass1} a symmetric positive definite $ \left( |\mathcal{I}_x| \times |\mathcal{I}_x| \right)$-matrix. The remanding part of the solution of (\ref{eq:genCase:redPDsyst_partitioned}), that is $\Delta \lambda^{l,ls}_{\mathcal{A}_{l}}$ and $\Delta \lambda^{u,ls}_{\mathcal{A}_{u}}$ are then given by

\begin{align} \label{eq:genCase:LS:overdet}
&\begin{bmatrix}
 -I_{\mathcal{A}_x \mathcal{A}_{l}} & I_{\mathcal{A}_x \mathcal{A}_{u}} \\
(X-L)_{\mathcal{A}_{l} \mathcal{A}_{l}} & \\
 & (U-X)_{\mathcal{A}_{u} \mathcal{A}_{u}}  \\ 
\end{bmatrix} \begin{bmatrix} \Delta \lambda^{l,ls}_{\mathcal{A}_{l}} \\ \Delta \lambda^{u,ls}_{\mathcal{A}_{u}}
\end{bmatrix} = \nonumber \\
&  -\begin{bmatrix}
\nabla_x \mathcal{L}(x,\lambda)_{\mathcal{A}_x} + H_{\mathcal{A}_x \mathcal{A}_x}  \Delta x_{\mathcal{A}_x} + H_{\mathcal{A}_x \mathcal{I}_x}  \Delta x^{ls}_{\mathcal{I}_x} - I_{\mathcal{A}_x \mathcal{I}_{l}}   \Delta \lambda^{l,ls}_{\mathcal{I}_{l}} + I_{\mathcal{A}_x \mathcal{I}_{u}} \Delta \lambda^{u,ls}_{\mathcal{I}_{u}} \\
\Lambdait^l_{\mathcal{A}_{l} \mathcal{A}_{l}} (X-L)_{\mathcal{A}_{l} \mathcal{A}_{l}} e - \mu e  + \Lambdait^l_{\mathcal{A}_{l} \mathcal{A}_x}  \Delta x_{\mathcal{A}_x} \\
\Lambdait^u_{\mathcal{A}_{u}\mathcal{A}_{u}} (U-X)_{\mathcal{A}_{u} \mathcal{A}_{u}} e - \mu e - \Lambdait^u_{\mathcal{A}_{u} \mathcal{A}_x}  \Delta x_{\mathcal{A}_x}
\end{bmatrix},
\end{align}
where $\nabla_x \mathcal{L}(x,\lambda)_{\mathcal{A}_x} = \nabla f(x)_{\mathcal{A}_x} - \lambda^l_{\mathcal{A}_x} + \lambda^u_{\mathcal{A}_x}$.
If the approximate $\Delta x_{\mathcal{A}_x}$ is exact then so is $\Delta x^{ls}_{\mathcal{I}_x}$ by (\ref{eq:genCase:Redx2syst_schur}). In consequence, the over-determined system (\ref{eq:genCase:LS:overdet}) has a unique solution that satisfies all equations, i.e., $\Delta \lambda^{ls}_{\mathcal{A}_x}$, or equvalently $ \Delta \lambda^{l,ls}_{\mathcal{A}_{l}}$ and $\Delta \lambda^{u,ls}_{\mathcal{A}_{u}}$ since $ \mathcal{A}_x  =  \mathcal{A}_{l} \cup \mathcal{A}_{u}$, are the corresponding parts of the solution to (\ref{eq:PDsyst}).   The solutions corresponding to the first and second block equation of (\ref{eq:genCase:LS:overdet}) will be labeled with superscript ``$b$'' and ``$-$'' respectively. These are given by
\begin{align} \label{eq:genCase:fullApprox:dlambdaAsimple1}
\begin{bmatrix}
 -I_{\mathcal{A}_x \mathcal{A}_{l}} & I_{\mathcal{A}_x \mathcal{A}_{u}} \end{bmatrix}  \begin{bmatrix} \Delta \lambda^{l,b}_{\mathcal{A}_{l}} \\ \Delta \lambda^{u,b}_{\mathcal{A}_{u}}
\end{bmatrix} & = - \Big[
\nabla f(x)_{\mathcal{A}_x} - \lambda^{l}_{\mathcal{A}_x} + \lambda^{u}_{\mathcal{A}_x}  + H_{\mathcal{A}_x \mathcal{A}_x}  \Delta x_{\mathcal{A}_x} + H_{\mathcal{A}_x \mathcal{I}_x}  \Delta x^{ls}_{\mathcal{I}_x}  \nonumber \\
& \qquad \> \>  - I_{\mathcal{A}_x \mathcal{I}_{l}}   \Delta \lambda^{l,ls}_{\mathcal{I}_{l}} + I_{\mathcal{A}_x \mathcal{I}_{u}} \Delta \lambda^{u,ls}_{\mathcal{I}_{u}}  \Big],
\end{align}
and
\begin{align*}  
&\begin{bmatrix}
(X-L)_{\mathcal{A}_{l} \mathcal{A}_{l}} & \\
 & (U-X)_{\mathcal{A}_{u} \mathcal{A}_{u}}  \\ 
\end{bmatrix} \begin{bmatrix} \Delta \lambda^{l,-}_{\mathcal{A}_{l}} \\ \Delta \lambda^{u,-}_{\mathcal{A}_{u}}
\end{bmatrix} =  \\ &-\begin{bmatrix}
\Lambdait^l_{\mathcal{A}_{l} \mathcal{A}_{l}} (X-L)_{\mathcal{A}_{l} \mathcal{A}_{l}} e - \mu e  + \Lambdait^l_{\mathcal{A}_{l} \mathcal{A}_x}  \Delta x_{\mathcal{A}_x} \\
\Lambdait^u_{\mathcal{A}_{u}\mathcal{A}_{u}} (U-X)_{\mathcal{A}_{u} \mathcal{A}_{u}} e - \mu e - \Lambdait^u_{\mathcal{A}_{u} \mathcal{A}_x}  \Delta x_{\mathcal{A}_x}
\end{bmatrix}.
\end{align*}
Alternatively, $ \Delta \lambda^{l,ls}_{\mathcal{A}_{l}}$ and $\Delta \lambda^{u,ls}_{\mathcal{A}_{u}}$ can be obtained as the least squares solution of (\ref{eq:genCase:LS:overdet}) 
\begin{equation}  \label{eq:genCase:fullApprox:dlambdaAls_syst}
\begin{aligned}
&\begin{bmatrix}
I_{\mathcal{A}_{l} \mathcal{A}_{l}} + (X-L)_{\mathcal{A}_{l} \mathcal{A}_{l}}^2  & \\
 & I_{\mathcal{A}_{u} \mathcal{A}_{u}} + (U-X)_{\mathcal{A}_{u} \mathcal{A}_{u}}^2  \\ 
\end{bmatrix} \begin{bmatrix} \Delta \lambda^{l,ls}_{\mathcal{A}_{l}} \\ \Delta \lambda^{u,ls}_{\mathcal{A}_{u}}
\end{bmatrix} = \nonumber \\
 &
\left[\begin{matrix}
 I_{\mathcal{A}_x \mathcal{A}_{l}}^T \Big( \nabla f(x)_{\mathcal{A}_x} - \lambda^l_{\mathcal{A}_x} + \lambda^u_{\mathcal{A}_x}  + H_{\mathcal{A}_x \mathcal{A}_x}  \Delta x_{\mathcal{A}_x} + H_{\mathcal{A}_x \mathcal{I}_x}  \Delta x^{ls}_{\mathcal{I}_x} - I_{\mathcal{A}_x \mathcal{I}_{l}}   \Delta \lambda^{l,ls}_{\mathcal{I}_{l}}    \\
- I_{\mathcal{A}_x \mathcal{A}_{u}}^T \Big( \nabla f(x)_{\mathcal{A}_x} - \lambda^l_{\mathcal{A}_x} + \lambda^u_{\mathcal{A}_x}  + H_{\mathcal{A}_x \mathcal{A}_x}  \Delta x_{\mathcal{A}_x} + H_{\mathcal{A}_x \mathcal{I}_x}  \Delta x^{ls}_{\mathcal{I}_x} - I_{\mathcal{A}_x \mathcal{I}_{l}}   \Delta \lambda^{l,ls}_{\mathcal{I}_{l}}
\end{matrix}\right.\\
& \quad
\left.\begin{matrix}
+ I_{\mathcal{A}_x \mathcal{I}_{u}} \Delta \lambda^{u,ls}_{\mathcal{I}_{u}} \Big) - (X-L)_{\mathcal{A}_{l} \mathcal{A}_{l}}  \left(\Lambdait^l_{\mathcal{A}_{l} \mathcal{A}_{l}} (X-L)_{\mathcal{A}_{l} \mathcal{A}_{l}} e - \mu e  + \Lambdait^l_{\mathcal{A}_{l} \mathcal{A}_x}  \Delta x_{\mathcal{A}_x} \right)  \\
\quad  \> \> \> + I_{\mathcal{A}_x \mathcal{I}_{u}} \Delta \lambda^{u,ls}_{\mathcal{I}_{u}} \Big) - (U-X)_{\mathcal{A}_{u} \mathcal{A}_{u}} \left( \Lambdait^u_{\mathcal{A}_{u}\mathcal{A}_{u}} (U-X)_{\mathcal{A}_{u} \mathcal{A}_{u}} e - \mu e - \Lambdait^u_{\mathcal{A}_{u} \mathcal{A}_x}  \Delta x_{\mathcal{A}_x}\right)
\end{matrix}\right],
\end{aligned}
\end{equation}
since $I_{\mathcal{A}_x \mathcal{A}_{l}}^T I_{\mathcal{A}_x \mathcal{A}_{l}} = I_{\mathcal{A}_{l} \mathcal{A}_{l}}$,  $I_{\mathcal{A}_x \mathcal{A}_{u}}^T I_{\mathcal{A}_x \mathcal{A}_{u}} = I_{\mathcal{A}_{u} \mathcal{A}_{u}}$ and $I_{\mathcal{A}_x \mathcal{A}_{l}}^T I_{\mathcal{A}_x \mathcal{A}_{u}} = I_{\mathcal{A}_x \mathcal{A}_{u}}^T I_{\mathcal{A}_x \mathcal{A}_{l}}=0$. The equations can also be written as
\begin{subequations} \label{eq:genCase:fullApprox:dlambdaAls}
\begin{align} \label{eq:genCase:fullApprox:dlambdaAls_l}
 \Delta \lambda^{l,ls}_{\mathcal{A}_{l}} & = \left( I_{\mathcal{A}_{l} \mathcal{A}_{l}} + (X-L)_{\mathcal{A}_{l} \mathcal{A}_{l}}^2 \right) \inv \Big[ I_{\mathcal{A}_x \mathcal{A}_{l}}^T \Big( \nabla f(x)_{\mathcal{A}_x} - \lambda^l_{\mathcal{A}_x} + \lambda^u_{\mathcal{A}_x} \nonumber \\
 & \qquad + H_{\mathcal{A}_x \mathcal{A}_x}  \Delta x_{\mathcal{A}_x} + H_{\mathcal{A}_x \mathcal{I}_x}  \Delta x^{ls}_{\mathcal{I}_x} - I_{\mathcal{A}_x \mathcal{I}_{l}}   \Delta \lambda^{l,ls}_{\mathcal{I}_{l}} + I_{\mathcal{A}_x \mathcal{I}_{u}} \Delta \lambda^{u,ls}_{\mathcal{I}_{u}} \Big) \nonumber \\
 & \qquad - (X-L)_{\mathcal{A}_{l} \mathcal{A}_{l}}  \left(\Lambdait^l_{\mathcal{A}_{l} \mathcal{A}_{l}} (X-L)_{\mathcal{A}_{l} \mathcal{A}_{l}} e - \mu e  + \Lambdait^l_{\mathcal{A}_{l} \mathcal{A}_x}  \Delta x_{\mathcal{A}_x} \right) \Big],
\end{align}
\begin{align}\label{eq:genCase:fullApprox:dlambdaAls_u}
\Delta \lambda^{u,ls}_{\mathcal{A}_{u}} & =  - \left( I_{\mathcal{A}_{u} \mathcal{A}_{u}} + (U-X)_{\mathcal{A}_{u} \mathcal{A}_{u}} \right) \inv \Big[  I_{\mathcal{A}_x \mathcal{A}_{u}}^T \Big( \nabla f(x)_{\mathcal{A}_x} - \lambda^l_{\mathcal{A}_x} + \lambda^{u}_{\mathcal{A}_x} \nonumber \\
 & \qquad + H_{\mathcal{A}_x \mathcal{A}_x}  \Delta x_{\mathcal{A}_x} + H_{\mathcal{A}_x \mathcal{I}_x}  \Delta x^{ls}_{\mathcal{I}_x} - I_{\mathcal{A}_x \mathcal{I}_{l}}   \Delta \lambda^{l,ls}_{\mathcal{I}_{l}} + I_{\mathcal{A}_x \mathcal{I}_{u}} \Delta \lambda^{u,ls}_{\mathcal{I}_{u}} \Big) \nonumber \\
 & \qquad + (U-X)_{\mathcal{A}_{u} \mathcal{A}_{u}} \left( \Lambdait^u_{\mathcal{A}_{u}\mathcal{A}_{u}} (U-X)_{\mathcal{A}_{u} \mathcal{A}_{u}} e - \mu e - \Lambdait^u_{\mathcal{A}_{u} \mathcal{A}_x}  \Delta x_{\mathcal{A}_x}\right)   \Big].
\end{align}
\end{subequations}
Finally, we state the main result which is analogous to the result of Theorem~\ref{thm:simplifiedCase}.
\begin{theorem} \label{thm:genCase}
Under Assumption~\ref{ass1}, let $\mathcal{B}\left((x^*, \lambda^*),
  \delta\right)$ and $\muhat$ be defined by
Lemma~\ref{lemma:background:FpnonsingCont} and
Lemma~\ref{lemma:LipcPath} respectively. For  $0< \mu \leq \muhat$ and $(x,\lambda) \in \mathcal{B}((x^*, \lambda^*), \delta)$, let $(\Delta x^N, \Delta \lambda^N)$ be the solution of (\ref{eq:PDsyst}) with $\mu^+ = \sigma \mu$, where $0< \sigma < 1$. Moreover, let the search direction components be defined as
\begin{subequations}
\begin{equation*}
\Delta x_i = \begin{cases} (\ref{eq:prop:genCase:schurBased:dx}) \mbox{ or } (\ref{eq:prop:genCase:compBased:dxLambdal}) &\>\> i \in \mathcal{A}_{l}, \\
(\ref{eq:prop:genCase:schurBased:dx}) \mbox{ or } (\ref{eq:prop:genCase:compBased:dxLambdau}) & \>\>  i \in \mathcal{A}_{u}, \\
  (\ref{eq:genCase:Redx2syst_schur}) & \>\>   i \in \mathcal{I}_x,
\end{cases}
\end{equation*}
\begin{equation*}
\Delta \lambda_i^l = \begin{cases}
 (\ref{eq:genCase:fullApprox:dlambdaAls_l}) \mbox{ or }(\ref{eq:genCase:fullApprox:dlambdaAsimple1}) &   i \in \mathcal{A}_{l},\\ 
 (\ref{eq:prop:genCase:compBased:dlambdal}) \mbox{ or } (\ref{eq:genCase:fullApprox:dlambdalI}) &  i \in \mathcal{I}_{l}, 
\end{cases}
\end{equation*}
\begin{equation*}
\Delta \lambda_i^u = \begin{cases}
 (\ref{eq:genCase:fullApprox:dlambdaAls_u}) \mbox{ or }(\ref{eq:genCase:fullApprox:dlambdaAsimple1}) & \> i \in \mathcal{A}_{u},\\ 
 (\ref{eq:prop:genCase:compBased:dlambdau}) \mbox{ or } (\ref{eq:genCase:fullApprox:dlambdauI}) & \> i \in \mathcal{I}_{u}.  \\ 
\end{cases}
\end{equation*}
\end{subequations}
Assume that $0 < \mu \le \muhat$ and $(x,\lambda)$ is sufficiently close to $(x^{\mu}, \lambda^{\mu})\in \mathcal{B}\left((x^*, \lambda^*), \delta\right)$ such that $\| F_{\mu} (x,\lambda) \| = \mathcal{O}(\mu)$. Then there exists $\mubar$, with $0 <\mubar \le \muhat$, such that for $0 < \mu \leq \mubar$ it holds that
\begin{equation*}
\left\| (
\Delta x, \Delta \lambda
)  - (
\Delta x^N , \Delta \lambda^N
)  \right\|   = \mathcal{O}(\mu^2).
\end{equation*}
\end{theorem}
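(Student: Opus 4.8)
The plan is to follow the proof of Theorem~\ref{thm:simplifiedCase} essentially verbatim, now carrying the lower- and upper-bound blocks separately: I would bound the approximation (or backward) error of each block of $(\Delta x, \Delta \lambda)$ by $\mathcal{O}(\mu^2)$, after which the stated estimate follows since the full vector consists of finitely many such blocks. The starting point is the active $x$-block. Whether a component $i \in \mathcal{A}_x$ is updated through (\ref{eq:prop:genCase:schurBased:dx}) or through (\ref{eq:prop:genCase:compBased:dxLambdal})/(\ref{eq:prop:genCase:compBased:dxLambdau}), Proposition~\ref{prop:genCase:schurBased} and Proposition~\ref{prop:genCase:compBased} give $|\Delta x_i - \Delta x_i^N| = \mathcal{O}(\mu^2)$; summing over the block yields $\|\Delta x_{\mathcal{A}_x} - \Delta x_{\mathcal{A}_x}^N\| = \mathcal{O}(\mu^2)$.

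Next I would propagate this error through the reduced Schur system (\ref{eq:genCase:Redx2syst_schur}). Because $\Delta x_{\mathcal{I}_x}^N$ solves the same system with the exact $\Delta x_{\mathcal{A}_x}^N$ on the right-hand side, subtracting the two instances shows that $\Delta x_{\mathcal{I}_x}^{ls} - \Delta x_{\mathcal{I}_x}^N$ equals the inverse of the reduced Schur matrix applied to a term $H_{\mathcal{I}_x \mathcal{A}_x}(\Delta x_{\mathcal{A}_x} - \Delta x_{\mathcal{A}_x}^N)$ together with contributions premultiplied by $(X-L)_{\mathcal{I}_l \mathcal{I}_l}\inv \Lambdait^l_{\mathcal{I}_l \mathcal{A}_x}$ and $(U-X)_{\mathcal{I}_u \mathcal{I}_u}\inv \Lambdait^u_{\mathcal{I}_u \mathcal{A}_x}$. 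The latter coefficients are $\mathcal{O}(\mu)$ by Lemma~\ref{lemma:genCase:Order} (being diagonal, they are supported on $\mathcal{A}_u$ and $\mathcal{A}_l$ respectively, where the corresponding multiplier is inactive), so they enter only at order $\mu^3$; with $H_{\mathcal{I}_x \mathcal{A}_x}$ bounded by the assumption on $f$ and the reduced Schur matrix having its inverse bounded uniformly in $\mu$, one obtains $\|\Delta x_{\mathcal{I}_x}^{ls} - \Delta x_{\mathcal{I}_x}^N\| = \mathcal{O}(\mu^2)$.

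With both $x$-blocks controlled, the multiplier errors follow by the same subtraction technique. For $i \in \mathcal{I}_l$ and $i \in \mathcal{I}_u$, subtracting (\ref{eq:genCase:fullApprox:dlambdalI}) and (\ref{eq:genCase:fullApprox:dlambdauI}) evaluated at the approximate versus the exact $\Delta x$ leaves backward errors premultiplied by $(X-L)_{\mathcal{I}_l \mathcal{I}_l}\inv \Lambdait^l_{\mathcal{I}_l \cdot}$ and $(U-X)_{\mathcal{I}_u \mathcal{I}_u}\inv \Lambdait^u_{\mathcal{I}_u \cdot}$, which are $\mathcal{O}(\mu)$ by Lemma~\ref{lemma:genCase:Order}; combined with the $\mathcal{O}(\mu^2)$ $x$-errors this even gives $\mathcal{O}(\mu^3)$, and the alternatives (\ref{eq:prop:genCase:compBased:dlambdal})/(\ref{eq:prop:genCase:compBased:dlambdau}) are already $\mathcal{O}(\mu^2)$ by Proposition~\ref{prop:genCase:compBased}. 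For $i \in \mathcal{A}_l$ and $i \in \mathcal{A}_u$, subtracting the least-squares expressions (\ref{eq:genCase:fullApprox:dlambdaAls_l})/(\ref{eq:genCase:fullApprox:dlambdaAls_u}), or the block form (\ref{eq:genCase:fullApprox:dlambdaAsimple1}), at the exact versus approximate arguments gives errors whose prefactors $(I_{\mathcal{A}_l \mathcal{A}_l} + (X-L)_{\mathcal{A}_l \mathcal{A}_l}^2)\inv$ and $(I_{\mathcal{A}_u \mathcal{A}_u} + (U-X)_{\mathcal{A}_u \mathcal{A}_u}^2)\inv$ have norm at most one, whose terms $(X-L)_{\mathcal{A}_l \mathcal{A}_l}\Lambdait^l_{\mathcal{A}_l \mathcal{A}_l}$ and $(U-X)_{\mathcal{A}_u \mathcal{A}_u}\Lambdait^u_{\mathcal{A}_u \mathcal{A}_u}$ are $\mathcal{O}(\mu)$ by Lemma~\ref{lemma:genCase:Order}, and whose remaining $H$-blocks are bounded; each such error is therefore $\mathcal{O}(\mu^2)$.

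The step I expect to be the main obstacle is the uniform boundedness of the inverse of the reduced Schur matrix in (\ref{eq:genCase:Redx2syst_schur}) as $\mu \to 0$. Unlike the full Schur complement (\ref{eq:genCase:schurComplOfPDsyst}), which becomes ill-conditioned, one must argue that restricting to $\mathcal{I}_x$ removes exactly the diagonal entries that blow up, leaving $H_{\mathcal{I}_x \mathcal{I}_x}$ plus the two benign positive-semidefinite $\mathcal{O}(\mu)$ corrections built from $(X-L)_{\mathcal{I}_l \mathcal{I}_l}\inv \Lambdait^l$ and $(U-X)_{\mathcal{I}_u \mathcal{I}_u}\inv \Lambdait^u$, and that the resulting matrix inherits positive definiteness, hence a smallest singular value bounded away from zero uniformly in $\mu$, from the reduced-Hessian condition in Assumption~\ref{ass1}. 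Once this is secured, the order bookkeeping via Lemma~\ref{lemma:genCase:Order} together with the bound $\|(\Delta x^N, \Delta \lambda^N)\| = \mathcal{O}(\mu)$ from Lemma~\ref{lemma:dzBound} is routine, and collecting the block estimates with $\mubar$ the minimum of the thresholds produced above yields $\|(\Delta x, \Delta \lambda) - (\Delta x^N, \Delta \lambda^N)\| = \mathcal{O}(\mu^2)$.
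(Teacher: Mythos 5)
Your proposal is correct and takes essentially the same approach as the paper: the paper only writes out the proof for the simplified case (Theorem~\ref{thm:simplifiedCase}) and presents Theorem~\ref{thm:genCase} as its analogue, and your argument is exactly that proof carried over block by block --- active $x$ via Propositions~\ref{prop:genCase:schurBased} and \ref{prop:genCase:compBased}, inactive $x$ by differencing the reduced Schur system (\ref{eq:genCase:Redx2syst_schur}) whose inverse is uniformly bounded by the reduced-Hessian condition plus positive semidefinite diagonal corrections, then inactive and active multipliers by subtraction with prefactors that are either bounded by one or $\mathcal{O}(\mu)$ by Lemma~\ref{lemma:genCase:Order}. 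Your support observations for the off-diagonal blocks such as $(X-L)_{\mathcal{I}_l \mathcal{I}_l}\inv \Lambdait^l_{\mathcal{I}_l \mathcal{A}_x}$ (nonzero only on $\mathcal{I}_l \cap \mathcal{A}_u$, hence $\mathcal{O}(\mu)$) correctly dispatch the only genuinely new bookkeeping the two-sided case introduces.
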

\end{footnotesize}
%
%

\newpage
\bibliographystyle{myplain}     
\bibliography{references,references2} 
\end{document}